\newcommand{\dist}{\operatorname{dist}}
\newcommand{\gd}{\Delta}
\newcommand{\inpt}[1]{\langle #1 \rangle}
\newcommand{\mX}{\mathcal{X}}
\newcommand{\mY}{\mathcal{Y}}
\newcommand{\mU}{\mathcal{U}}
\newcommand{\mV}{\mathcal{V}}
\newcommand{\mW}{\mathcal{W}}
\newcommand{\ms}{\mathscr}
\newcommand{\gw}{\Omega}
\newcommand{\ga}{\gamma}
\newcommand{\gl}{\lambda}
\newcommand{\gL}{\Lambda}
\newcommand{\gk}{\kappa}
\newcommand{\om}{\omega}
\newcommand{\gz}{\theta}
\newcommand{\nb}{\nabla}
\newcommand{\vp}{\varphi}
\newcommand{\tup}{\textup}
\newcommand{\csg}{\{ S(t)\}_{t\geq0}}
\newcommand{\beq}{\begin{equation}}
\newcommand{\eeq}{\end{equation}}
\newcommand{\bl}{\label}
\numberwithin{equation}{section}
\theoremstyle{plain}
\newtheorem{theorem}{Theorem}
\newtheorem{lemma}{Lemma}
\newtheorem{proposition}{Proposition}
\theoremstyle{definition}
\newtheorem{definition}{Definition}
\theoremstyle{remark}
\begin{document}

\title[OREGONATOR SYSTEM]{\textbf{Longtime Dynamics of the Oregonator System}}
\author{Yuncheng You} 
\address{Department of Mathematics and Statistics \\
University of South Florida \\
Tampa, FL 33620} 
\email{you@mail.usf.edu} 
\keywords{Reaction-diffusion system, Oregonator, global attractor, absorbing set, asymptotic compactness, exponential attractor}
\subjclass[2000]{37L30, 35B40, 35B41, 35K55, 35K57, 80A32, 92B05.}
\date{}

\begin{abstract}
In this work the existence and properties of a global attractor for the solution semiflow of the Oregonator system are proved. The Oregonator system is the mathematical model of the famous Belousov-Zhabotinskii reaction. A rescaling and grouping estimation method is developed to show the absorbing property and the asymptotic compactness of the solution trajectories of this three-variable reaction-diffusion system with quadratic nonlinearity from the autocatalytic kinetics. It is proved that the fractal dimension of the global attractor is finite. The existence of an exponential attractor for this Oregonator semiflow is also shown.
\end{abstract}
\maketitle

\section{\textbf{Introduction}}

The Belousov-Zhabotinskii (BZ) reaction is a class of oxidation reactions of organic components catalyzed by bromate ions, which exhibits oscillatory phenomena. The temporal oscillations of the reaction was first reported by B.P. Belousov in 1958 and the development of oscillatory spatial structures was reported later by Zhabotinskii in 1967, cf. \cite{aZ91}. Since then the BZ reaction has been extensively studied by physical chemists on its kinetic behavior \cite{EP98, PL68, jT82} and by mathematicians on the dynamics and patterns of the solutions of the associated mathematical model \cite{FB85, FN74, cvP88, RP92, TF80}.

Field, K\"{o}r\"{o}s, and Noyes \cite{FKN72} developed a detailed chemical mechanism for the BZ reaction and later Field and Noyes \cite{FN74} derived a simplified model  as a reaction-diffusion system (originally an ODE system) consisting of three unknowns, which retains most of the important features of the FKN mechanism. This Field-Noyes model is called Oregonator, the name coined by J.J. Tyson \cite{jT76}, which refers to the working place of the two scholars. 

The chemical reaction scheme of this Field-Noyes model is given by
$$
	\tup{A + Y} \longrightarrow \tup{X}, \quad  \tup{X + Y} \longrightarrow \tup{P}, \quad \tup{B + X} \longrightarrow \tup{2X + Z}, \quad \tup{2X} \longrightarrow \tup{Q}, \quad \tup{Z} \longrightarrow \tup{$\gk$Y},
$$
where \tup{A} and \tup{B} are reactants, \tup{P} and \tup{Q} are products, $\gk$ is a stoichiometric constant, and \tup{X, Y}, and \tup{Z} are the three key intermediate substances representing \tup{HBrO$_2$} (bromous acid), \tup{Br$^{-}$} (bromide ion), and \tup{Ce(IV)} (Cerium), respectively. Under the assumption that the concentrations of the reactants \tup{A} and \tup{B} as well as the catalytic H$^+$ ion are held constant, the dimensionless form of the diffusive Oregonator system is given by

\begin{align} 
	\frac{\partial u}{\partial t} &= d_1 \gd u + a_1 u + b_1 v - Fu^2 - G_1 uv,  \bl{eu} \\
	\frac{\partial v}{\partial t} &= d_2 \gd v - b_2 v + c_2 w - G_2 uv,  \bl{ev} \\
	\frac{\partial w}{\partial t} &= d_3 \gd w + a_3 u - c_3 w, \bl{ew}
\end{align}
where $u (t, x), v(t, x)$ and $w(t, x)$ represent the concentrations of \tup{X}, \tup{Y}, and \tup{Z}, respectively, for $t > 0, x \in \gw$, and $\gw$ is a bounded Lipschitz domain in $\mathbb{R}^n\, (n \leq 3)$, with the homogeneous Dirichlet boundary condition 
\begin{equation} \label{dbc}
	u(t, x) = v(t, x) = w (t, x) = 0, \quad t > 0, \; \, x \in \partial \gw,
\end{equation}
and an initial condition
\begin{equation} \label{ic}
        u(0,x) = u_0 (x), \; v(0, x) = v_0 (x), \; w(0,x) = w_0 (x),\quad x \in \gw.
\end{equation}
The diffusive coefficients $d_1, d_2, d_3$ and the reaction rate constants $a_i^\prime$s, $b_i^\prime$s, $c_i^\prime$s, $F$ and $G_i^\prime$s are all positive constants. In this work, we shall study the asymptotic dynamics of the solution semiflow generated by this problem. 

The diffusive Oregonator system is a prototype of many \emph{quadratic autocatalytic} reaction-diffusion systems served as mathematical models in physical chemistry and in mathematical biology, especially the kinetic biochemical reactions in cell and molecular biology. For the \emph{cubic autocatalytic} reaction-diffusion systems such as the Brusselator system \cite{PL68}, Gray-Scott equations \cite{GS83, GS84}, Schnackenberg equations \cite{jS79}, and Selkov equations \cite{eS68}, after the seminal publications \cite{LMOS93, jP93} there have been extensive studies by numerical simulations and by mathematical analysis on spatial patterns (including but not restricted to Turing patterns) and complex bifurcations as well as asymptotic dynamics, see the references in \cite{yY07, yY08, yY09a, yY09b, yY10}.

For the Oregonator system, the global existence of classical solutions in the continuously differentiable function spaces and the stability of steady-state positive steady-state solutions are studied in \cite{TF80, cvP88, RP92}. In \cite[Section II.4.4]{CV02} it is shown that under the condition 
\beq \bl{rst}
	c_2^2 < 4 b_2 c_3,
\eeq 
there exists a solution semiflow for the evolutionary equations formulated from \eqref{eu}--\eqref{ew} with the homogeneous Dirichlet or Neumann boundary conditions in the positive invariant region of the product $L^2$ space and that solutiion semiflow has a global attractor.

In this paper, we shall remove the rerstriction \eqref{rst} and prove the existence of a global attractor in the product $L^2$ phase space for the semiflow of the weak solutions of the Oregonator system \eqref{eu}--\eqref{ew} with the homogeneous Dirichlet boundary conditions \eqref{dbc}. The results are also valid for the corresponding Neumann boundary conditions. We shall also study the properties and the fractal dimension of the global attractor. Furthermore, we shall prove the existence of an exponential attractor for this solution semiflow. 

For most reaction-diffusion systems consisting of two or more equations arising from the scenarios of autocatalytic chemical reactions or biochemical activator-inhibitor reactions, such as the Brusselator system and the Oregonator systems here, the \emph{asymptotically dissipative sign condition} in vector version,
$$
	\lim_{|s| \to \infty} f(s) \cdot s \leq C,
$$
where $C \geq 0$ is a constant, is inherently not satisfied by the nonlinear part of the equations, see \eqref{opF} later. Besides there is a \emph{coefficient barrier} caused by the arbitrary and different coefficients in the linear terms of the three equations. These are the obstacles for showing the absorbing property and the asymptotically compact property of the semiflow of the weak solutions of the Oregonator system.

The novel feature in this paper is to overcome these obstacles and to make the \emph{a priori} estimates by a method of \emph{rescaling and grouping estimation} that proves the globally dissipative and attractive longtime dynamics in terms of the existence of a global attractor.

We start with the formulation of an evolutionary equation associated with the initial-boundary value problem \eqref{eu}--\eqref{ic}. Define the product Hilbert spaces as follows,
\begin{equation*}
    	H = [L^2 (\gw)]^3 , \quad E =  [H_{0}^{1}(\gw)]^3, \quad \textup{and} \quad \Pi =  [(H_{0}^{1}(\gw) \cap H^{2}(\gw))]^3.
\end{equation*}
The norm and inner-product of $H$ or the component space $L^2 (\gw)$ will be denoted by
$\| \, \cdot \, \|$ and $\inpt{\,\cdot , \cdot\,}$, respectively. The norm of $L^p (\gw)$ will be denoted by $\| \, \cdot \, \|_{L^{p}}$ if $p \ne 2$. By the Poincar\'{e} inequality and the homogeneous Dirichlet boundary condition \eqref{dbc}, there is a constant $\ga > 0$ such that
\begin{equation} \label{pcr}
    \| \nabla \varphi \|^2 \geq \ga \| \varphi \|^2, \quad \textup{for} \;  \varphi \in H_{0}^{1}(\gw) \; \textup{or} \;  E,
\end{equation}
and we shall take $\| \nabla \varphi \|$ to be the equivalent norm $\| \varphi \|_E$ of the space $E$ or the component space $H_{0}^{1}(\gw)$. We use $| \, \cdot \, |$ to denote an absolute value or a vector norm in a Euclidean space.

It can be checked easily that, by the Lumer-Phillips theorem and the analytic semigroup generation theorem \cite{SY02}, the linear differential operator
\begin{equation} \label{opA}
        A =
        \begin{pmatrix}
            d_1 \gd     & 0    &0  \\[3pt]
            0 & d_2 \gd    &0  \\[3pt]
            0 &0  &d_3 \gd
        \end{pmatrix}
        : D(A) (= \Pi) \longrightarrow H
\end{equation}
is the generator of an analytic $C_0$-semigroup on the Hilbert space $H$, which will be denoted by $\{e^{At}, t \geq 0\}$. By the fact that $H_{0}^{1}(\gw) \hookrightarrow L^6(\gw) \hookrightarrow L^4(\gw)$ is a chain of continuous embeddings for $n \leq 3$ and using the H\"{o}lder inequality, 
$$
	\| u v \| \leq \| u \|_{L^4} \| v \|_{L^4}, \quad \| u^{2}\| = \| u \|_{L^4}^2, \quad  \textup{for} \; u, v \in L^4 (\gw),
$$
one can verify that the nonlinear mapping
\begin{equation} \label{opF}
    f(g) =
        \begin{pmatrix}
            a_1 u + b_1 v - Fu^2 - G_1 u v  \\[3pt]
            - b_2 v  + c_2 w - G_2 u v \\[3pt]
            a_3 u - c_3 w  
        \end{pmatrix}
        : E \longrightarrow H,
\end{equation}
where $g = (u, v, w)$, is well defined on $E$ and the mapping $f$ is locally Lipschitz continuous. Thus the initial-boundary value problem \eqref{eu}--\eqref{ic} is formulated into an initial value problem of the Oregonator evolutionary equation,
\begin{equation} \label{eveq} 
    \begin{split}
    \frac{dg}{dt} &= A g + f(g), \quad t > 0, \\[2pt]
     g(0) &= g_0 = \textup{col} \, (u_0, v_0, w_0).
     \end{split}
\end{equation}
where $g (t) = \textup{col} \, (u(t, \cdot), v(t, \cdot), w(t, \cdot))$, simply written as $(u(t, \cdot), v(t, \cdot), w(t, \cdot))$. We shall accordingly write $g_0 = (u_0, v_0, w_0)$. 

The following proposition will be used in proving the existence of a weak solution to this initial value problem. Its proof is seen in \cite[Theorem II.1.4]{CV02} and in \cite[Proposition I.3.3]{BP86}.
\begin{proposition} \label{P1} 
Consider the Banach space
\begin{equation} \label{wsp}
	W(0,\tau) = \left\{\zeta (\cdot): \zeta \in L^2 (0, \tau; E) \; \textup{and}\; \partial_t \zeta \in L^2 (0, \tau; E^*)\right\}
\end{equation}
with the norm
$$
	\|\zeta \|_W = \|\zeta \|_{L^2 (0, \tau; E)} + \|\partial_t \zeta \|_{L^2 (0, \tau; E^*)}.
$$
Then the following statements hold:

\textup{(a)} The embedding $W(0, \tau) \hookrightarrow L^2 (0, \tau; H)$ is compact.

\textup{(b)} If $\zeta \in W(0, \tau)$, then it coincides with a function in $C([0, \tau]; H)$ for a.e. $t \in [0, \tau]$.

\textup{(c)} If $\zeta, \xi \in W(0, \tau)$, then the function $t \to \inpt{\zeta (t), \xi (t)}_H$ is absolutely continuous on $[0, \tau]$ and
$$
	\frac{d}{dt} \inpt{\zeta (t), \xi (t)} = \left(\frac{d\zeta}{dt}, \xi (t)\right) + \left(\zeta(t), \frac{d\xi}{dt}\right), \; a.e. \, t \in [0, \tau],
$$
where $(\cdot , \cdot)$ is the $(E^*, E)$ dual product.
\end{proposition}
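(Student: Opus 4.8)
The plan is to deduce the three statements from two ingredients --- the \emph{compactness} of the embedding $E \hookrightarrow H$ and the elementary energy identity for curves that are smooth in $t$, the latter being transported to all of $W(0,\tau)$ by a density argument. First I would record the Gelfand-triple structure $E \hookrightarrow H \hookrightarrow E^*$: since $\gw$ is a bounded Lipschitz domain in $\mb R^n$ with $n \le 3$, the Rellich--Kondrachov theorem makes $H_0^1(\gw) \hookrightarrow L^2(\gw)$, hence $E \hookrightarrow H$, a compact embedding, and by duality $H = H^* \hookrightarrow E^*$ is compact as well. From $E \hookrightarrow H \hookrightarrow E^*$ with the first embedding compact, the standard Ehrling-type contradiction argument (normalize a hypothetical counterexample sequence in $H$, extract an $H$-convergent subsequence whose $E^*$-norms tend to zero, reach a contradiction) yields the interpolation inequality
\begin{equation*}
	\| v \|_H \;\le\; \ve \, \| v \|_E + C_\ve \, \| v \|_{E^*}, \qquad v \in E,
\end{equation*}
for every $\ve > 0$. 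I would also fix the density fact that curves which are $C^1$ in $t$ with values in $E$ are dense in $W(0,\tau)$ in the $\| \cdot \|_W$ norm; this follows by extending a given $\zeta \in W(0,\tau)$ beyond $[0,\tau]$ (e.g.\ by reflection) and mollifying in $t$.

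Next I would prove \textup{(b)} and \textup{(c)} together. For a curve $\zeta$ that is $C^1$ in $t$ with values in $E$, the scalar function $t \mapsto \| \zeta(t) \|^2$ is classically differentiable with derivative $2\inpt{\partial_t \zeta(t), \zeta(t)}$, where from now on $\inpt{\cdot,\cdot}$ also denotes the $(E^*,E)$ duality pairing (which extends the inner product of $H$); integrating gives
\begin{equation*}
	\| \zeta(t) \|^2 - \| \zeta(s) \|^2 \;=\; 2 \int_s^t \inpt{\partial_t \zeta(\sigma), \zeta(\sigma)} \, d\sigma, \qquad 0 \le s \le t \le \tau.
\end{equation*}
Applying this to the difference of two approximating curves $\zeta_k, \zeta_l$ of a given $\zeta \in W(0,\tau)$, choosing a time $t_0$ at which $\| \zeta_k(t_0) - \zeta_l(t_0) \|$ does not exceed its $L^2(0,\tau)$-average (which tends to $0$), and estimating the integral term by $\| \partial_t(\zeta_k - \zeta_l) \|_{L^2(0,\tau;E^*)} \, \| \zeta_k - \zeta_l \|_{L^2(0,\tau;E)}$, one sees that $\{\zeta_k\}$ is Cauchy in $C([0,\tau];H)$; its uniform limit coincides a.e.\ with $\zeta$, which is \textup{(b)}. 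Letting $k \to \infty$ in the displayed identity --- the left side converging uniformly, the right side because $\partial_t\zeta_k \to \partial_t\zeta$ in $L^2(0,\tau;E^*)$ and $\zeta_k \to \zeta$ in $L^2(0,\tau;E)$ --- extends it to every $\zeta \in W(0,\tau)$; since $\sigma \mapsto \inpt{\partial_t\zeta(\sigma),\zeta(\sigma)}$ lies in $L^1(0,\tau)$, the map $t \mapsto \| \zeta(t) \|^2$ is absolutely continuous with a.e.\ derivative $2\inpt{\partial_t\zeta(t),\zeta(t)}$. Replacing $\zeta$ by $\zeta + \xi$ and by $\zeta - \xi$ and subtracting delivers the product rule in \textup{(c)}.

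For \textup{(a)}, let $\{\zeta_n\}$ be a bounded sequence in $W(0,\tau)$. Since $E \hookrightarrow H$ continuously, $\{\zeta_n\}$ is bounded in $L^2(0,\tau;H)$, so for each $n$ there is a time $t_n$ with $\|\zeta_n(t_n)\|^2$ at most its $L^2(0,\tau)$-average; the energy identity from \textup{(c)} then gives a uniform bound $\sup_n \| \zeta_n \|_{C([0,\tau];H)} < \infty$. Consequently, for each fixed $t$ the set $\{\zeta_n(t)\}$ is bounded in $H$ and hence precompact in $E^*$, while $\| \zeta_n(t) - \zeta_n(s) \|_{E^*} \le |t-s|^{1/2} \| \partial_t\zeta_n \|_{L^2(0,\tau;E^*)}$ shows $\{\zeta_n\}$ is equicontinuous from $[0,\tau]$ into $E^*$; by the Arzel\`{a}--Ascoli theorem a subsequence converges in $C([0,\tau];E^*)$, in particular in $L^2(0,\tau;E^*)$. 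Feeding this into the interpolation inequality,
\begin{equation*}
	\| \zeta_n - \zeta_m \|_{L^2(0,\tau;H)} \;\le\; \ve \, \| \zeta_n - \zeta_m \|_{L^2(0,\tau;E)} + C_\ve \, \| \zeta_n - \zeta_m \|_{L^2(0,\tau;E^*)},
\end{equation*}
the first term on the right is bounded by $\ve$ times a constant and the second tends to $0$ along the subsequence, so $\limsup_{n,m\to\infty} \| \zeta_n - \zeta_m \|_{L^2(0,\tau;H)} \le C\ve$ for every $\ve > 0$; the subsequence is therefore Cauchy, hence convergent, in $L^2(0,\tau;H)$, which is the asserted compactness.

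The heart of the matter, and the only step that is not routine bookkeeping, is the compactness assertion \textup{(a)}: the delicate point is not the interpolation inequality itself but upgrading the mere boundedness of $\{\zeta_n\}$ in $L^2(0,\tau;E)$ --- which does not by itself make $\{\zeta_n(t)\}$ precompact at a fixed time --- to the uniform $C([0,\tau];H)$ bound, which is exactly what the ``good time-slice'' device together with the energy identity provides, thereby enabling the Arzel\`{a}--Ascoli argument in $E^*$. Once that bound is in hand, the interpolation inequality converts compactness in $C([0,\tau];E^*)$ into compactness in $L^2(0,\tau;H)$, and parts \textup{(b)} and \textup{(c)} reduce to passing to the limit in the smooth-curve energy identity along the dense family.
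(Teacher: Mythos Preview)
Your argument is correct. It is worth noting, however, that the paper does not supply its own proof of this proposition: it simply records the result and cites \cite[Theorem II.1.4]{CV02} and \cite[Proposition I.3.3]{BP86}. What you have written is essentially the classical proof found in those references --- the Lions--Magenes density/energy-identity argument for \textup{(b)} and \textup{(c)}, and an Aubin--Lions type compactness argument (Ehrling interpolation combined with Arzel\`{a}--Ascoli in $E^*$) for \textup{(a)}. So there is no genuine divergence of approach to compare; you have filled in the details that the paper deliberately omits. One small remark: your proof of \textup{(a)} invokes the energy identity established in \textup{(c)}, so the logical order of your write-up (\textup{(b)},\textup{(c)} before \textup{(a)}) is the right one, even though the statement lists them in the opposite order.
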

By conducting \emph{a priori} estimates on the Galerkin approximate solutions of the initial value problem \eqref{eveq} and through extracting the weak and weak* convergent subsequences in the appropriate spaces, we can prove the local and then global existence and uniqueness of the weak solution $g(t)$ of \eqref{eveq} in the next section, also the continuous dependence of the solutions on the initial data and the regularity properties satisfied by the weak solution. Therefore, the weak solutions for all initial data in $H$ form a semiflow in the space $H$.

We refer to \cite{jH88, SY02, rT88} and many references therein for the concepts and basic facts in the theory of infinite dimensional dynamical systems.
\begin{definition} \label{D:abs}
Let $\{S(t)\}_{t \geq 0}$ be a semiflow on a Banach space $\mX$. A bounded subset $B_0$ of $\mX$ is called an \emph{absorbing set} in $\mX$ if, for any bounded subset $B \subset \mX$, there is a finite time $t_0 \geq 0$ depending on $B$ such that $S(t)B \subset B_0$ for all $t \geq t_0$.
\end{definition}

\begin{definition} \label{D:asp}
A semiflow $\{S(t)\}_{t \geq 0}$ on a Banach space $\mX$ is called \emph{asymptotically compact} if for any bounded sequences $\{x_n \}$ in $\mX$ and $\{t_n \} \subset (0, \infty)$ with $t_n \to \infty$, there exist subsequences $\{x_{n_k}\}$ of $\{u_n \}$ and $\{t_{n_k}\}$ of $\{t_n\}$, such that $\lim_{k \to \infty} S(t_{n_k})x_{n_k}$ exists in $\mX$.
\end{definition}

\begin{definition} \label{D:atr}
Let $\{S(t)\}_{t \geq 0}$ be a semiflow on a Banach space $\mX$. A subset $\ms{A}$ of $\mX$ is called a \emph{global attractor} for this semiflow, if the following conditions are satisfied: 

(i) $\ms{A}$ is a nonempty, compact, and invariant subset of $\mX$ in the sense that 
$$
	S(t)\ms{A} = \ms{A} \quad \textup{for any} \; \;  t \geq 0. 
$$

(ii) $\ms{A}$ attracts any bounded set $B$ of $\mX$ in terms of the Hausdorff distance, i.e.
$$
	\text{dist} (S(t)B, \ms{A}) = \sup_{x \in B} \inf_{y \in \ms{A}} \| S(t)x - y\|_{\mX} \to 0, \;\; \text{as} \; \, t \to \infty.
$$
\end{definition}

The following proposition states concisely the basic result on the existence of a global attractor for a semiflow, cf. \cite{jH88, SY02, rT88}.

\begin{proposition} \label{P:exga}
Let $\csg$ be a semiflow on a Banach space or an invariant region $\mX$ in it. If the following conditions are satisfied\textup{:} 

\textup{(i)} $\csg$ has a bounded absorbing set $B_0$ in $\mX$, and 

\textup{(ii)} $\csg$ is asymptotically compact in $\mX$, \\
then there exists a global attractor $\ms{A}$ in $\mX$ for this semiflow, which is given by
$$
    \ms{A} = \om (B_0) \overset{\textup{def}}{=} \bigcap_{\tau \geq 0} \text{Cl}_{\mX}  \bigcup_{t \geq \tau} (S(t)B_0).
$$
\end{proposition}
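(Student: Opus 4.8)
The plan is to take the candidate set $\ms{A} = \om(B_0)$ given by the displayed formula in the statement and verify, one property at a time, that it is a nonempty compact invariant set attracting every bounded subset of $\mX$, i.e. that it meets Definition~\ref{D:atr}. The single preliminary fact I would set up and then reuse throughout is the sequential description of $\om(B_0)$: a point $y \in \mX$ lies in $\om(B_0)$ if and only if there exist $x_n \in B_0$ and $t_n \to \infty$ with $S(t_n) x_n \to y$ in $\mX$. The ``if'' direction is immediate, since for each fixed $\tau$ the tail of the sequence lies in $\bigcup_{t \geq \tau} S(t) B_0$, so $y$ belongs to every closure; the ``only if'' direction is the usual diagonal extraction, choosing for each $k$ a point $z_k \in \bigcup_{t \geq k} S(t) B_0$ with $\|z_k - y\| < 1/k$, writing $z_k = S(s_k) x_k$ with $s_k \geq k$ and $x_k \in B_0$, and noting $s_k \to \infty$.

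With this description in hand, I would first check that $\ms{A}$ is nonempty, bounded and compact. Nonemptiness uses the asymptotic compactness hypothesis: fix any $x \in B_0$ and any $t_n \to \infty$, extract a convergent subsequence of $\{S(t_n) x\}$, and its limit lies in $\ms{A}$ by the sequential description. Boundedness uses the absorbing hypothesis: setting $\tau_0 = t_0(B_0)$ one has $\bigcup_{t \geq \tau_0} S(t) B_0 \subset B_0$, so $\ms{A} \subset \text{Cl}_{\mX} B_0$, a bounded set; and closedness is automatic, $\ms{A}$ being an intersection of closed sets. Compactness then reduces to sequential compactness: given $\{y_k\} \subset \ms{A}$, use the sequential description to choose $x_k \in B_0$ and $s_k \to \infty$ with $\|S(s_k) x_k - y_k\| < 1/k$, invoke asymptotic compactness to obtain a subsequence with $S(s_{k_j}) x_{k_j} \to p$, and conclude $y_{k_j} \to p \in \ms{A}$.

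Next I would prove the attraction property of Definition~\ref{D:atr}. For $B_0$ itself I argue by contradiction: if $\dist(S(t) B_0, \ms{A})$ did not tend to $0$, there would exist $\ve > 0$, $t_n \to \infty$ and $x_n \in B_0$ with $\dist(S(t_n) x_n, \ms{A}) \geq \ve$; but asymptotic compactness extracts $S(t_{n_k}) x_{n_k} \to y \in \ms{A}$ (again by the sequential description), contradicting the lower bound. For an arbitrary bounded set $B \subset \mX$, the absorbing hypothesis gives $t_0 = t_0(B)$ with $S(t_0) B \subset B_0$, so for $t \geq t_0$, $S(t) B = S(t - t_0) S(t_0) B \subset S(t - t_0) B_0$ and hence $\dist(S(t) B, \ms{A}) \leq \dist(S(t - t_0) B_0, \ms{A}) \to 0$ by the case just treated.

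Finally I would verify the invariance $S(t) \ms{A} = \ms{A}$ for every $t \geq 0$, which I expect to be the main obstacle. The forward inclusion $S(t)\ms{A} \subset \ms{A}$ is easy from the continuity of the single map $S(t)$: if $y = \lim S(t_n) x_n \in \ms{A}$, then $S(t) y = \lim S(t) S(t_n) x_n = \lim S(t + t_n) x_n$ with $t + t_n \to \infty$, so $S(t) y \in \ms{A}$. The reverse inclusion $\ms{A} \subset S(t) \ms{A}$ is the delicate one. Given $y = \lim S(t_n) x_n \in \ms{A}$ with $x_n \in B_0$, consider the backward-shifted sequence $S(t_n - t) x_n$, defined for all $n$ with $t_n > t$; since $\{x_n\}$ is bounded and $t_n - t \to \infty$, asymptotic compactness extracts a subsequence with $S(t_{n_k} - t) x_{n_k} \to z$, and $z \in \ms{A}$ by the sequential description; then continuity of $S(t)$ yields $S(t) z = \lim_k S(t) S(t_{n_k} - t) x_{n_k} = \lim_k S(t_{n_k}) x_{n_k} = y$, so $y \in S(t)\ms{A}$. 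The crux is exactly this last inclusion: one must feed the backward shifts $S(t_n - t) x_n$ into the asymptotic compactness hypothesis, which is legitimate because $\{x_n\}$ lies in the bounded set $B_0$, and then reassemble the extracted limit with the continuity of $S(t)$; the absorbing set, by contrast, is what is needed for the attraction of \emph{arbitrary} bounded sets and for the boundedness of $\ms{A}$. Combining the four parts, $\ms{A} = \om(B_0)$ is a nonempty compact invariant set attracting every bounded subset of $\mX$, which is the claim.
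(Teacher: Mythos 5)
Your proof is correct. The paper itself gives no proof of Proposition~\ref{P:exga} --- it is stated as a known result with references to Hale, Sell--You, and Temam --- and your argument (sequential characterization of $\om(B_0)$, nonemptiness/compactness and attraction via asymptotic compactness, backward shifts plus continuity of $S(t)$ for the inclusion $\ms{A}\subset S(t)\ms{A}$) is exactly the standard proof found in those references.
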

In Section 2 we prove the local existence and uniqueness of the weak solutions of the Oregonator evolutionary equation \eqref{eveq} and in Section 3 we shall prove the global existence of the weak solutions and the absorbing property of this solution semiflow. In Section 4 we shall prove the asymptotic compactness of this solutions semiflow and show the existence of a global attractor in the space $H$ for this Oregonator semiflow. In Section 5 we show that the global attractor has a finite Hausdorff dimension and a finite fractal dimension. In Section 6 we prove some regularity properties of this global attractor. Finally, in Section 7, the existence of an exponential attractor for this solution semiflow is shown.

\section{\textbf{The Local Existence of Weak Solutions}}

In this paper, we shall write $u(t, x), v(t, x), w(t, x)$ simply as $u(t), v(t), w(t)$ or even as $u, v, w$. Similarly for other functions of $(t,x)$. 

The local existence and uniqueness of the solution to a system of multi-component reaction-diffusion equations such as the IVP \eqref{eveq} with certain regularity requirement is not a trivial issue. There are two different approaches to get a solution in a Sobolev space. One approach is the mild solutions provided by the "variation-of-constant formula" in terms of the associated linear semigroup $\{e^{At}\}_{t\geq 0}$, but the parabolic theory of mild solutions requires that $g_0 \in E$ instead of $g_0 \in H$ assumed here. The other approach is the weak solutions obtained through the Galerkin approximations and the Lions-Magenes type of compactness treatment, cf. \cite{CV02, jL69}. 
\begin{definition}
A function $g(t,x), (t, x) \in [0, \tau] \times \gw$, is called a weak solution to the IVP of the parabolic evolutionary equation \eqref{eveq}, if the following two conditions are satisfied:

(i) $\frac{d}{dt} (g, \phi) = (Ag, \phi) + (f(g), \phi)$ is satisfied for a.e. $t \in [0, \tau]$ and for any $\phi \in E$; 

(ii) $g (t, \cdot) \in L^2 (0, \tau; E) \cap C_w ([0, \tau]; H)$ such that $g (0) = g_0$.

\noindent
Here $(\cdot, \cdot)$ stands for the $(E^*, E)$ dual product.
\end{definition}
For reaction-diffusion systems with more general nonlinearity that may be of higher degrees and may involve transport terms, the definition of corresponding weak solutions is made in \cite[Definition XV.3.1]{CV02}.

\begin{lemma} \label{L:locs}
For any given initial datum $g_0 \in H$, there exists a unique, local, weak solution $g(t) = (u(t), v(t), w(t)), \, t \in [0, \tau]$ for some $\tau > 0$, of the Oregonator evolutionary equation \eqref{eveq} such that $g(0) = g_0$, which satisfies 
\begin{equation} \label{soln}
    g \in C([0, \tau]; H) \cap C^1 ((0, \tau); H) \cap L^2 (0, \tau; E).
\end{equation}
\end{lemma}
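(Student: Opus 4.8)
\textit{Proof proposal.} The plan is to construct the weak solution by the Faedo--Galerkin method together with the Lions--Magenes compactness argument furnished by Proposition \ref{P1}, and then to upgrade the regularity by parabolic smoothing. Let $\{e_j\}_{j\ge1}\subset\Pi$ be the orthonormal basis of $H$ consisting of eigenvectors of $-A$ (equivalently, of the componentwise Dirichlet Laplacian); since the $e_j$ are also mutually orthogonal in $E$, the orthogonal projection $P_m:H\to H_m:=\mathrm{span}\{e_1,\dots,e_m\}$ is bounded on $E$ and, by duality, on $E^*$. Seek the Galerkin approximation $g_m(t)\in H_m$ solving
\begin{equation*}
	\frac{dg_m}{dt}=Ag_m+P_mf(g_m),\qquad g_m(0)=P_mg_0 .
\end{equation*}
Because $f$ is locally Lipschitz from $E$ to $H$ and all norms are equivalent on the finite-dimensional space $H_m$, the right-hand side is a locally Lipschitz vector field on $\mathbb{R}^m$, so this system has a unique maximal solution on some interval $[0,\tau_m)$.

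First I would derive \emph{a priori} bounds uniform in $m$. Pairing the equation with $g_m$ in $H$ and using $\langle Ag_m,g_m\rangle=-\sum_i d_i\|\nabla(g_m)_i\|^2\le-\ggd\|g_m\|_E^2$ for a suitable $\ggd>0$, it remains to handle $\langle f(g_m),g_m\rangle$. Among its six contributions, the product-linear terms $b_1\langle u_m,v_m\rangle$, $c_2\langle v_m,w_m\rangle$, $a_3\langle u_m,w_m\rangle$ and the quadratic-diagonal terms $a_1\|u_m\|^2$, $-b_2\|v_m\|^2$, $-c_3\|w_m\|^2$ are dominated by $C\|g_m\|^2$; the genuinely cubic terms $-F\langle u_m^2,u_m\rangle$, $-G_1\langle u_mv_m,u_m\rangle$, $-G_2\langle u_mv_m,v_m\rangle$ are sign-indefinite, but using the continuous embeddings $H_0^1(\gw)\hookrightarrow L^6(\gw)\hookrightarrow L^4(\gw)\hookrightarrow L^3(\gw)$ valid for $n\le3$, Hölder's inequality, the Gagliardo--Nirenberg inequalities $\|\phi\|_{L^3}^3\le C\|\nabla\phi\|^{n/2}\|\phi\|^{3-n/2}$ and $\|\phi\|_{L^4}^2\le C\|\nabla\phi\|^{n/2}\|\phi\|^{2-n/2}$, and Young's inequality, each of them is bounded by $\tfrac{\ggd}{2}\|g_m\|_E^2$ plus a polynomial in $\|g_m\|$ of degree at most six. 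This yields, on $[0,\tau_m)$,
\begin{equation*}
	\frac{d}{dt}\|g_m(t)\|^2+\ggd\|g_m(t)\|_E^2\le C_1+C_2\|g_m(t)\|^6 ,
\end{equation*}
with $C_1,C_2$ independent of $m$. Comparing $\|g_m\|^2$ with the solution of the Riccati-type equation $y'=C_1+C_2y^3$, $y(0)=\|g_0\|^2$, which exists on some $[0,T^*)$ with $T^*=T^*(\|g_0\|)>0$, and fixing $\tau\in(0,T^*)$, the blow-up alternative for ODEs forces $\tau_m>\tau$ and gives $\|g_m(t)\|\le M:=M(\|g_0\|,\tau)$ on $[0,\tau]$ for every $m$; reinserting this bound and integrating gives $\int_0^\tau\|g_m\|_E^2\,dt\le C(M,\tau)$. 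Thus $\{g_m\}$ is bounded in $L^\infty(0,\tau;H)\cap L^2(0,\tau;E)$, and estimating $\partial_tg_m=Ag_m+P_mf(g_m)$ in $E^*$ via $\|Ag_m\|_{E^*}\le C\|g_m\|_E$ and $\|u_m^2\|_{E^*}\le C\|u_m\|_{L^{12/5}}^2\le C\|u_m\|^{2-n/6}\|\nabla u_m\|^{n/6}$ (and similarly for $u_mv_m$) shows $\{\partial_tg_m\}$ is bounded in $L^2(0,\tau;E^*)$; hence $\{g_m\}$ is bounded in $W(0,\tau)$.

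Then I would pass to the limit. By Proposition \ref{P1}(a) and weak/weak-$*$ compactness, along a subsequence $g_m\rightharpoonup g$ weakly in $L^2(0,\tau;E)$, weak-$*$ in $L^\infty(0,\tau;H)$, $\partial_tg_m\rightharpoonup\partial_tg$ weakly in $L^2(0,\tau;E^*)$, and $g_m\to g$ strongly in $L^2(0,\tau;H)$ and a.e. on $(0,\tau)\times\gw$. Interpolating the strong $L^2(0,\tau;H)$ convergence against the $L^2(0,\tau;L^6(\gw))$ bound gives $u_m\to u$, $v_m\to v$ in $L^2(0,\tau;L^4(\gw))$, whence $u_m^2\to u^2$ and $u_mv_m\to uv$ in $L^1(0,\tau;L^2(\gw))$; together with the weak convergence of the linear terms, this lets one pass to the limit in the Galerkin equation tested against each $e_j$ and then, by density, against any $\phi\in E$, so that $g$ satisfies condition (i) in the definition of weak solution stated above. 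Since $g\in L^2(0,\tau;E)$ with $\partial_tg\in L^2(0,\tau;E^*)$, Proposition \ref{P1}(b) gives $g\in C([0,\tau];H)$, and testing the approximate equation against time-dependent $\phi\in C^1([0,\tau];E)$ with $\phi(\tau)=0$ and passing to the limit identifies $g(0)=g_0$; thus $g$ is a weak solution on $[0,\tau]$.

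Finally, for uniqueness set $h=g_1-g_2$ for two weak solutions with the same data; by Proposition \ref{P1}(c) the map $t\mapsto\|h(t)\|^2$ is absolutely continuous and $\tfrac{d}{dt}\|h\|^2=2\langle Ah,h\rangle+2(f(g_1)-f(g_2),h)$. Factoring the quadratic differences, $u_1^2-u_2^2=(u_1+u_2)h_u$ and $u_1v_1-u_2v_2=u_1h_v+v_2h_u$, and estimating $\int|u_1+u_2|\,h_u^2\le\|u_1+u_2\|_{L^{3/2}}\|h_u\|_{L^3}^2\le CM\|h\|\,\|\nabla h\|$ (and likewise for the remaining quadratic terms), one absorbs the gradient contributions into $2\langle Ah,h\rangle=-2\sum_id_i\|\nabla h_i\|^2$ to obtain $\tfrac{d}{dt}\|h\|^2\le C(M)\|h\|^2$; Gronwall then forces $h\equiv0$, and the same inequality applied to distinct data yields the Lipschitz-in-$H$ dependence on $g_0$ on $[0,\tau]$. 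The membership $g\in C([0,\tau];H)\cap L^2(0,\tau;E)$ is already established; for $g\in C^1((0,\tau);H)$ one invokes parabolic smoothing: since $g\in L^2(0,\tau;E)$, for arbitrarily small $t_0\in(0,\tau)$ one has $g(t_0)\in E$, and the mild-solution theory for \eqref{eveq} with datum $g(t_0)\in E$ — valid because $A$ generates an analytic $C_0$-semigroup and $f:E\to H$ is locally Lipschitz — produces a solution that is $C^1$ in $H$ to the right of $t_0$ and, by the uniqueness just proved, coincides with $g$; exhausting $(0,\tau)$ in this way gives $g\in C^1((0,\tau);H)$. The main obstacle is that $f$ obeys no dissipative sign condition and the cubic quantities $-Fu_m^3$, $-G_1u_m^2v_m$, $-G_2u_mv_m^2$ produced by pairing $f(g_m)$ with $g_m$ are sign-indefinite, so the energy identity only closes into the Riccati-type inequality $y'\le C_1+C_2y^3$ — which is exactly why only \emph{local} existence is obtained here; the delicate technical point is choosing the interpolation exponents so that every cubic term is absorbed by $\ggd\|g_m\|_E^2$ while the residue stays polynomial in $\|g_m\|$, and simultaneously keeping $\{\partial_tg_m\}$ bounded in $L^2(0,\tau;E^*)$ so that the compactness of $W(0,\tau)\hookrightarrow L^2(0,\tau;H)$ can be used to pass to the limit in the quadratic terms.
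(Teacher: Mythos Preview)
Your argument is correct and follows the same Galerkin--compactness--smoothing scheme as the paper. Two technical points are handled differently, and in both cases your treatment is the more explicit one. First, for the energy estimate the paper simply refers forward to the computations of Lemma~\ref{L:glsn}; those inequalities, however, discard the cubic integrals $-\int Fu^3$, $-\int G_1u^2v$, $-\int G_2uv^2$ by sign, which uses the nonnegativity of $u,v,w$ and is therefore only available once one restricts to $H_+$. Your route---absorbing these cubics into $\tfrac{\ggd}{2}\|g_m\|_E^2$ via Gagliardo--Nirenberg and Young and then comparing with the scalar ODE $y'=C_1+C_2y^3$---is what is genuinely needed to close the bound for arbitrary $g_0\in H$, and it makes transparent why only \emph{local} existence is obtained here. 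Second, for the time derivative the paper asserts $\{P_mf(g_m)\}$ bounded in $L^2(0,\tau;H)$ and then identifies the nonlinear limit via the Lions a.e.-plus-boundedness lemma; you instead bound $\partial_tg_m$ only in $L^2(0,\tau;E^*)$ and pass to the limit in the quadratic terms by upgrading $g_m\to g$ to strong $L^2(0,\tau;L^4)$ convergence through interpolation between the strong $L^2(0,\tau;H)$ limit and the $L^2(0,\tau;L^6)$ bound. Both routes reach the same conclusion; yours is somewhat more robust in dimension $n=3$, where the $L^2(0,\tau;H)$ bound on $f(g_m)$ does not follow immediately from $g_m\in L^\infty(0,\tau;H)\cap L^2(0,\tau;E)$ alone.
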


\begin{proof}
Using the orthonormal basis of eigenfunctions $\{e_j (x)\}_{j=1}^\infty$ of the Laplace operator with the homogeneous Dirichlet boundary condition:
$$
	\gd e_j + \gl_j e_j = 0 \;\; \textup{in} \;\, \gw, \quad e_j |_{\partial \gw} = 0,  \quad j = 1, 2, \cdots, n, \cdots,
$$
we consider the solution 
\begin{equation} \label{gq}
	g_m (t, x) = \sum_{j=1}^m q_j^m (t) e_j (x), \quad t \in [0, \tau], \; x \in \gw,
\end{equation}
of the approximate system
\begin{equation} \label{eveqm} 
	\begin{split}
    \frac{\partial g_m}{\partial t} = A &g_m + P_m f(g_m), \quad t > 0, \\[3pt]
      g_m(0) &= P_m \, g_0 \in H_m,
      \end{split}
\end{equation}
where each $q_j^m (t)$ for $j = 1, \cdots, m$ is a three-dimensional vector function of $t$ only, corresponding to the three unknowns $u, v$, and $w$, and $P_m: H \to H_m = \textup{Span} \{e_1, \cdots, e_m\}$ is the orthogonal projection. Note that for each given integer $m \geq 1$, \eqref{eveqm} can be written as an IVP of a system of ODEs, whose unknown is a $3m$-dimensional vector function of all the coefficient functions of time $t$ in the expansion of $g_m (t,x)$, namely,
$$
	q^m (t) = \textup{col} \, (q_{ju}^m (t), q_{jv}^m (t), q_{jw}^m (t); \, j = 1, \cdots, m). 
$$
The IVP of this ODE system \eqref{eveqm} can be written as 
\begin{equation} \label{odeg}
	\begin{split}
	\frac{dq^m}{dt} &= \gL_m q^m (t) + f_m (q^m (t)), \; t > 0, \\[3pt]
	q^m (0) &= \textup{col} (P_m u_{0j}, \, P_m v_{0j}, \, P_m w_{0j}; \, j = 1, \cdots, m).
	\end{split}
\end{equation}
Note that $\gL_m$ is a matrix and $f_m$ is a $3m$-dimensional vector of quadratic polynomials of $3m$-variables, which is certainly a locally Lipschitz continuous vector function in $\mathbb{R}^{3m}$. Thus the solution of the initial value problem \eqref{odeg} exists uniquely on a time interval $[0, \tau]$, for some $\tau > 0$. Substituting all the components of this solution $q^m (t)$ into \eqref{gq}, we obtain a unique local solution $g_m (t, x)$ of the initial value problem \eqref{eveqm}, for any $m \geq 1$.

By the multiplier method we can conduct \emph{a priori} estimates based on 
\begin{equation*} 
	\frac{1}{2} \| g_m (t) \|_{H_m}^2 + \inpt{\boldsymbol{d} \nabla g_m (t), \nabla g_m (t)}_{H_m} = \inpt{P_m f(g_m (t)), g_m (t)}_{H_m}, \quad t \in [0, \tau],
\end{equation*}
where $\boldsymbol{d} = \textup{diag} (d_1, d_2, d_3)$ is a diagonal matrix. These estimates are similar to what we shall present in Lemma \ref{L:glsn} in the next section. Note that $\|g_m (0)\| = \|P_m g_0 \| \leq \|g_0 \|$ for all $m \geq 1$.  It follows that (see the proof of Lemma \ref{L:glsn})
$$
	\{g_m\}_{m=1}^\infty \, \textup{is a bounded sequence in} \, L^2 (0, \tau; E) \cap L^\infty (0, \tau; H),
$$
and, since $A: E \to E^*$ is a bounded linear operator, 
$$
	\{Ag_m\}_{m=1}^\infty \, \textup{is a bounded sequence in} \, L^2 (0, \tau; E^*), \; \; \textup{where} \, E^* = [H^{-1} (\gw)]^3. 
$$
Since $f: E \to H$ is continuous, 
\begin{equation} \label{bdf}
	\{P_m f(g_m)\}_{m=1}^\infty \, \textup{is a bounded sequence in} \, L^2 (0, \tau; H) \subset L^2 (0, \tau; E^*).
\end{equation}
Therefore, by taking subsequences (which we will always relabel as the same as the original one), there exist limit functions 
\begin{equation} \label{limgf}
	g(t, \cdot) \in L^2(0, \tau; E) \cap L^\infty (0, \tau; H) \quad \textup{and} \quad \Phi (t, \cdot) \in L^2 (0, \tau; H)
\end{equation}
such that 
\begin{equation} \label{limg}
	\begin{split}
	g_m &\longrightarrow g \;\, \textup{weakly in} \; L^2 (0, \tau; E),   \\
	g_m &\longrightarrow g \;\, \textup{weak* in} \; L^\infty (0, \tau; H),   \\
	Ag_m &\longrightarrow Ag \;\, \textup{weakly in} \; L^2 (0, \tau; E^*),  
	\end{split}
\end{equation}
and
\begin{equation} \label{limf}
		P_m f(g_m) \longrightarrow \Phi \;\, \textup{weakly in} \; L^2 (0, \tau; H), 
\end{equation}
as $m \to \infty$. To estimate the (distributional) time derivative sequence $\{\partial_t g_m\}_{m=1}^\infty$, we take the supremum of the $(E^*, E)$ dual product of the equation \eqref{eveqm} with any $\eta \in E$ and use the fact that $(h, \eta) = \inpt{h, \eta}$ for any $h \in H$ to obtain
$$
	\|\partial_t g_m (t) \|_{E^*} \leq C \left(\|A g_m (t)\|_{E^*} + \|P_m f(g_m (t))\|_H \right), \quad t \in [0, \tau], \, m \geq 1,
$$
where $C$ is a uniform constant for all $m \geq 1$. Thus by the boundedness in \eqref{limg} and \eqref{limf} it holds that
$$
	\{\partial_t g_m\}_{m=1}^\infty \, \textup{is a bounded sequence in} \, L^2 (0, \tau; E^*),
$$
and, by further extracting of a subsequence if necessary, it follows from the uniqueness of distributional time derivative that 
\begin{equation} \label{limt}
	\partial_t g_m \longrightarrow \partial_t g \;\, \textup{weakly in} \; L^2 (0, \tau; E^*),  \, \textup{as} \; m \to \infty.
\end{equation}

In order to show that the limit function $g$ is a weak solution to the IVP \eqref{eveq}, we need to show $\Phi = f(g)$. By Proposition \ref{P1}, item (a), the boundedness of $\{g_m\}$ in $L^2 (0, \tau; E)$ and $\{\partial_t g_m\}$ in $L^2 (0, \tau; E^*)$ implies that (in the sense of subsequence extraction)
\begin{equation} \label{strh}
	g_m  \longrightarrow g \;\, \textup{strongly in} \; L^2 (0, \tau; H),  \, \textup{as} \; m \to \infty.
\end{equation}
Consequently, there exists a subsequence such that
\begin{equation} \label{aeg}
	g_m (t, x) \longrightarrow g(t, x) \;\, \textup{for a.e.} \; (t, x) \in [0, \tau] \times \gw,  \, \textup{as} \; m \to \infty.
\end{equation}
Due to the continuity of the mapping $f$, we have
\begin{equation} \label{aef}
	f(g_m (t, x))  \longrightarrow f(g(t, x)) \;\, \textup{for a.e.} \; (t, x) \in [0, \tau] \times \gw,  \, \textup{as} \; m \to \infty.
\end{equation}
According to \cite[Lemma I.1.3]{jL69} or \cite[Lemma II.1.2]{CV02} and by the triangle inequality in terms of the $H$-norm, the two facts \eqref{bdf} and \eqref{aef} guarantee that 
\begin{equation} \label{limfg}
	P_m f(g_m) \longrightarrow f(g) \;\, \textup{weakly in} \; L^2 (0, \tau; H), \; \textup{as} \; m \to \infty.
\end{equation}
By the uniqueness, \eqref{limf} and \eqref{limfg} imply that $\Phi = f(g)$ in $L^2 (0, \tau; H)$.

With \eqref{limg}, \eqref{limt} and \eqref{limfg}, by taking limit of the integral of the weak version of \eqref{eveqm} with any given $\phi \in L^2 (0, \tau; E)$, 
$$
	\int_0^\tau \left(\frac{\partial g_m}{\partial t}, \phi \right) dt = \int_0^\tau \left[(Ag_m, \phi) + (P_m f(g_m), \phi)\right] dt, \;\; \textup{as} \; m \to \infty,
$$
we obtain
\begin{equation} \label{iw}
	\int_0^\tau \left(\frac{\partial g}{\partial t}, \phi\right) dt = \int_0^\tau \left[(Ag, \phi) + (f(g), \phi)\right] dt, \;\; \textup{for any} \; \phi \in L^2 (0, \tau; E).
\end{equation}
Let $\phi \in E$ be any constant function. Then we can use the property of Lebesgue points for each integral in \eqref{iw} to obtain
\begin{equation} \label{weq} 
	\frac{d}{dt} (g, \phi) = (Ag, \phi) + (f(g), \phi), \;  \textup{for a.e}. \; t \in [0, \tau] \; \textup{and  for any} \;  \phi \in E.
\end{equation}

Next, $\partial_t g \in L^2 (0, \tau; E^*)$ shown in \eqref{limt} implies that $g \in C_w ([0, \tau]; E^*)$. Since the embedding $H \hookrightarrow E^*$ is continuous, this $g \in C_w ([0, \tau]; E^*)$ and $g \in L^\infty (0, \tau; H)$ shown in \eqref{limg} imply that 
$$
	g \in C_w ([0, \tau]; H). 
$$
due to \cite[Theorem II.1.7 and Remark II.1.2]{CV02}. Now we show that $g(0) = g_0$. By Proposition \ref{P1}, item (c), for any $\phi \in C^1([0, \tau]; E)$ with $\phi (\tau) = 0$, we have
$$
	\int_0^\tau \left(- g, \partial_t \phi\right) \, dt = \int_0^\tau \left[(Ag, \phi) + (f(g), \phi)\right] dt + \inpt{g(0), \phi(0)},
$$
and
$$
	\int_0^\tau \left(- g_m, \partial_t \phi\right) \, dt = \int_0^\tau \left[(Ag_m, \phi) + (P_m f(g_m), \phi)\right] dt + \inpt{P_m g_0, \phi(0)}.
$$
Take the limit of the last equality as $m \to \infty$. Since $P_m g_0 \to g_0$ in $H$, we obtain $\inpt{g(0), \phi (0)} = \inpt{g_0, \phi (0)}$ for any $\phi (0) \in E$. Finally the denseness of $E$ in $H$ implies that $g(0) = g_0$ in $H$. Then by checking against Definition 4, we conclude that the limit function $g$ is a weak solution to the initial value problem \eqref{eveq}.

The uniqueness of weak solution can be shown by estimating the difference of any two possible weak solutions with the same initial value $g_0$ through the weak version of the evolutionary equation (or the variation-of-constant formula) and the Gronwall inequality. 

By Proposition \ref{P1}, item (b), and the fact that the weak solution $g \in W(0, \tau)$, the space defined in \eqref{wsp}, we see that $g \in C ([0, \tau]; H)$, which also infers the continuous dependence of the weak solution $g(t) = g(t; g_0)$ on $g_0$ for any $t \in [0, \tau]$. 

Moreover, since $g \in L^2 (0, \tau; E)$, for any $t \in (0, \tau)$ there exists an earlier time $t_0 \in (0, t)$ such that $g (t_0) \in E$. Then the weak solution coincides with the strong solution expressed by the mild solution on $[t_0, \tau]$, cf. \cite{CV02, SY02}, which turns out to be continuously differentiable in time at $t$ strongly in $H$, cf. \cite[Theorem 48.5]{SY02}. Thus we have shown $g \in C^1 ((0, \tau); H)$ and the weak solution $g$ satisfies the properties specified in \eqref{soln}. 
\end{proof}

\section{\textbf{Absorbing Properties}}

In this section, we shall prove the global existence of the weak solutions and investigate the absorbing properties of the solution semiflow.

The following proposition \cite[Theorem II.4.2]{CV02} provides the necessary and sufficient conditions for a semilinear parabolic system 
\beq \bl{sps}
	\frac{\partial \xi}{\partial t} = A_0 \, \gd \xi + \psi (\xi) + \gz (x), \quad t > 0, \; \; x \in \gw,
\eeq
on a bounded Lipschitz domain $\gw \subset \mathbb{R}^n$ with the homogeneous Dirichlet (or Neumann) boundary condition to have the positive cone $\mathbb{R}_{+}^N$ as an invariant region \cite{Sm83}. Here $A_0$ is an $N \times N$ symmetric and positive definite matrix, $\xi (t, x) = \tup{col}\, (\xi_1, \xi_2, \cdots \xi_N), \gz (x) = \tup{col}\, (\gz_1, \gz_2, \cdots, \gz_N) \in [L^2 (\gw)]^N$ is a given vector function, and $\psi = \tup{col}\, (\psi_1, \psi_2, \cdots, \psi_N): [H_0^1 (\gw)]^N (\tup{or} \, [H^1 (\gw)]^N) \to [L^2 (\gw)]^N$ is locally Lipschitz continuous.

\begin{proposition} \bl{P3}
The positive cone $\mathbb{R}_{+}^N = \{\xi \in \mathbb{R}^N: \xi_i \geq 0,\, i = 1, \cdots, N\}$ is an invariant region for \eqref{sps} if and only if the following two conditions are satisfied:

\textup{(i)}  $A_0$ is a diagonal matrix; and

\textup{(ii)} for every $i = 1, 2, \cdots, N$, it holds that
\beq  \bl{pcc}
	\psi_i (\xi_1, \, \cdots, \xi_{i-1}, \, 0, \, \xi_{i+1}, \, \cdots, \xi_N) + \gz_i (x) \geq 0, 
\eeq
for any $x \in \gw$ and $\xi_j \geq 0, j \ne i$.
\end{proposition}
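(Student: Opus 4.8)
The plan is to establish the two implications separately: necessity of \textup{(i)} and \textup{(ii)} by explicit counterexamples governed by the maximum principle, and sufficiency by a Stampacchia-type energy estimate on the negative parts of the components of $\xi$ combined with Gronwall's inequality. In both directions it is convenient to argue first for smooth data and classical solutions of \eqref{sps} (for which pointwise evaluation is legitimate) and then to pass to general $\xi(0) \in [L^2(\gw)]^N \cap \mathbb{R}_{+}^N$ by density together with the continuous dependence recorded in Lemma \ref{L:locs}.

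\emph{Necessity.} Assume $\mathbb{R}_{+}^N$ is invariant. If $A_0$ had a nonzero off-diagonal entry $(A_0)_{ik}$ with $k \ne i$, I would take smooth data on the face $\{\xi_i = 0\}$, namely $\xi_i(0,\cdot) \equiv 0$ and $\xi_j(0,\cdot) \geq 0$ for $j \ne i$, with the profile of $\xi_k$ chosen so that at some interior point $x_0$ one has $(A_0)_{ik}\,\gd\xi_k(0,x_0) < 0$ while still $\psi_i(\xi(0,x_0)) + \gz_i(x_0) \leq 0$ (for a quadratic, or more generally locally Lipschitz, $\psi$ this is arranged by keeping $\xi(0,x_0)$ small and the curvature of $\xi_k$ at $x_0$ large; this is the Chueh--Conley--Smoller obstruction, cf. \cite{Sm83}). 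Since $\gd\xi_i(0,\cdot)\equiv 0$, the $i$-th equation of \eqref{sps} gives $\partial_t\xi_i(0,x_0) < 0$, so $\xi_i$ leaves $\mathbb{R}_{+}$ for small $t>0$, a contradiction; hence $A_0$ must be diagonal. If, with $A_0$ now diagonal, condition \eqref{pcc} failed for some index $i$ on a set of positive measure, I would take spatially homogeneous data $\xi(0) = (\xi_1,\dots,\xi_{i-1},0,\xi_{i+1},\dots,\xi_N)$ with $\xi_j \geq 0$; comparison with the scalar ODE for $\xi_i$ (equivalently, the identity for $\|\xi_i^{-}\|^2$ evaluated at $t=0^{+}$ displayed below) shows that $\xi_i$ acquires a negative part immediately, again a contradiction.

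\emph{Sufficiency.} Assume \textup{(i)} and \textup{(ii)} and let $\xi(0) \in \mathbb{R}_{+}^N$. Write $A_0 = \operatorname{diag}(a_1,\dots,a_N)$ with each $a_i > 0$, and for each $i$ put $\xi_i^{-} = \max\{-\xi_i,0\} \in H_0^1(\gw)$, so that $\xi_i^{-}(0) = 0$. Using $\xi_i^{-}$ as a test function in the $i$-th equation of \eqref{sps} --- legitimate by the regularity of the weak solution (Lemma \ref{L:locs} and Proposition \ref{P1}\textup{(c)}) --- and integrating by parts in the diffusion term, one obtains
$$
	\frac{1}{2}\frac{d}{dt}\|\xi_i^{-}\|^2 = -a_i\|\nabla\xi_i^{-}\|^2 - \inpt{\psi_i(\xi) + \gz_i, \, \xi_i^{-}} \leq -\inpt{\psi_i(\xi) + \gz_i, \, \xi_i^{-}} .
$$
Next, decompose $\psi_i(\xi) + \gz_i = \big[\psi_i(\xi) - \psi_i(\hat\xi)\big] + \big[\psi_i(\hat\xi) + \gz_i\big]$, where $\hat\xi$ is the projection of $\xi$ onto the face $\{y \in \mathbb{R}_{+}^N : y_i = 0\}$, i.e. $\hat\xi_i = 0$ and $\hat\xi_j = \max\{\xi_j,0\}$ for $j \ne i$. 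On the support of $\xi_i^{-}$, condition \eqref{pcc} gives $\psi_i(\hat\xi) + \gz_i \geq 0$, so the second bracket contributes a nonpositive term to $-\inpt{\psi_i(\xi) + \gz_i, \, \xi_i^{-}}$; the first bracket is controlled by local Lipschitz continuity of $\psi$ together with the pointwise bound $|\xi - \hat\xi| \leq \sum_{j=1}^N \xi_j^{-}$ on $\{\xi_i < 0\}$, whence $|\psi_i(\xi) - \psi_i(\hat\xi)| \leq L\sum_{j=1}^N \xi_j^{-}$. The Cauchy--Schwarz inequality and summation over $i$ then give
$$
	\frac{d}{dt}\sum_{i=1}^N \|\xi_i^{-}\|^2 \leq C\sum_{i=1}^N \|\xi_i^{-}\|^2 .
$$
Since the left-hand sum vanishes at $t = 0$, Gronwall's inequality forces $\xi_i^{-}(t) \equiv 0$ for every $i$ and all $t$ in the existence interval, i.e. $\xi(t) \in \mathbb{R}_{+}^N$.

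\emph{The main obstacle.} The delicate point is the low regularity in the sufficiency step: $\psi$ is only locally Lipschitz while the weak solution lies in $L^2(0,\tau;E)$ rather than in $L^\infty$, so a priori the Lipschitz constant $L$ above depends on the (possibly unbounded) size of $\xi$. I would overcome this either by first establishing the relevant $L^\infty$ (or $L^p$) a priori bounds, or --- more robustly --- by carrying out the same estimate for a globally Lipschitz truncation $\psi^{(R)}$ of $\psi$ and then letting $R \to \infty$, or equivalently by performing the computation on the smooth Galerkin approximations from the proof of Lemma \ref{L:locs} before passing to the limit. A secondary and milder issue is the justification of the pointwise/maximum-principle reasoning of the necessity part for merely weak solutions, which is again handled by working first with classical solutions for smooth data and then invoking density and continuous dependence on the initial data.
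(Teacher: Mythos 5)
The paper does not prove Proposition \ref{P3} at all: it is quoted verbatim from \cite[Theorem II.4.2]{CV02} (with \cite{Sm83} as background), so there is no in-paper argument to compare yours against. Judged on its own, your sketch is the standard Chueh--Conley--Smoller-type argument and the overall architecture (maximum-principle counterexamples for necessity, Stampacchia truncation of the negative parts plus Gronwall for sufficiency) is the right one. Two points need repair before this would count as a proof. First, in the necessity of \eqref{pcc} you invoke ``spatially homogeneous data,'' but nonzero constants are not admissible under the homogeneous Dirichlet condition (they are not in $H_0^1(\gw)$); you should instead take $\xi_i(0)\equiv 0$ and smooth bumps approximating the offending constant vector $\xi^*$ on the positive-measure set where \eqref{pcc} fails, which still gives $\gd\xi_i(0)\equiv 0$ and $\partial_t\xi_i<0$ there. (Relatedly, the parenthetical requirement $\psi_i(\xi(0,x_0))+\gz_i(x_0)\le 0$ in your diagonality argument is neither needed nor always arrangeable; what matters is only that the off-diagonal diffusion term can be made to dominate the bounded reaction term by taking the curvature of $\xi_k$ large.) Second, the inequality $|\psi_i(\xi)-\psi_i(\hat\xi)|\le L\sum_j\xi_j^-$ presupposes a pointwise Lipschitz constant, which for the quadratic nonlinearities relevant here grows like $1+|\xi|$; the resulting term $\int_\gw(1+|\xi|)\,\xi_j^-\xi_i^-\,dx$ must be handled by H\"{o}lder, the embedding $H_0^1\hookrightarrow L^4$ (or $L^6$), and interpolation, producing a time-integrable Gronwall coefficient rather than a constant one. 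You flag this obstacle honestly and your proposed remedies (truncation of $\psi$, or a priori $L^p$ bounds) are viable, but note that the Galerkin route is the least convenient of the three, since the projections $P_m$ do not preserve the cone and the truncations $\xi_i^-$ are not admissible test functions at the Galerkin level.
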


Define $H_+$ and $E_+$ to be the cones in the space $H$ and $E$, respectively, as follows,
\begin{align*}
	H_+ &= \{\varphi (\cdot) = (\vp_1, \vp_2, \vp_3) \in H: \, \varphi_i (x) \geq 0, \, x \in \gw, \, i = 1, 2, 3\}, \\
	E_+ &= \{\varphi (\cdot) = (\vp_1, \vp_2, \vp_3) \in E: \, \varphi_i (x) \geq 0, \, x \in \gw, \, i = 1, 2, 3\}.
\end{align*}
\begin{lemma} \label{L:glsn}
For any initial datum $g_0 =(u_0, v_0, w_0) \in H_+$, there exists a unique, global, weak solution $g(t) = (u(t), v(t), w(t)), \, t \in [0, \infty)$, of the initial value problem of the Oregonator evolutionary equation \eqref{eveq} and it becomes a strong solution on the time interval $(0, \infty)$. Moreover, there exists an absorbing set $B_0$ in $H_+$,
\begin{equation} \label{absb}
    B_0=\left\{g\in H_+: \|g\|^2\leq K_1\right\},
\end{equation}
where $K_1$ is a positive constant, for the solution semiflow $\csg$ of this evolutionary equation \eqref{eveq} on the cone $H_+$.
\end{lemma}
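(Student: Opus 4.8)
The plan is to reduce the lemma to a single uniform energy estimate produced from \emph{rescaled} multipliers: first establish positivity via the invariant–region Proposition~\ref{P3}; then derive a dissipative differential inequality for a weighted $H$–norm in which the quadratic cross term responsible for the restriction \eqref{rst} is cancelled by weighting the three equations unequally and the leftover quadratic–in–$u$ terms are swallowed by the autocatalytic cubic; finally read off global (and eventually strong) existence and the absorbing ball. For positivity, I would apply Proposition~\ref{P3} to the evolution equation \eqref{eveq} with $N=3$, $A_0=\tup{diag}(d_1,d_2,d_3)$, $\gz\equiv0$, $\psi=f$: condition (i) holds since $A_0$ is diagonal, and condition (ii) holds because the components of $f$ restricted to the faces of the cone are $b_1 v$, $c_2 w$, $a_3 u$, all $\ge0$ when $u,v,w\ge0$. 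Hence $H_+$ is positively invariant, so the local weak solution of Lemma~\ref{L:locs} issued from $g_0\in H_+$ stays in $H_+$; in particular $u,v,w\ge0$, which is exactly what makes the genuinely nonlinear terms $-G_1 u^2v$ and $-G_2 uv^2$ arising below nonpositive.

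For the core estimate, I would fix constants $\gl_2,\gl_3>0$ to be chosen, set $y(t)=\|u(t)\|^2+\gl_2\|v(t)\|^2+\gl_3\|w(t)\|^2$, test \eqref{eveq} with the rescaled multiplier $\phi=(u,\gl_2 v,\gl_3 w)\in E$, and use Proposition~\ref{P1}(c) componentwise to get
\[
\tfrac12 y' + d_1\|\nb u\|^2 + \gl_2 d_2\|\nb v\|^2 + \gl_3 d_3\|\nb w\|^2 = \intw\!\Big(a_1 u^2 + b_1 uv - Fu^3 - G_1 u^2v + \gl_2(-b_2 v^2 + c_2 vw - G_2 uv^2) + \gl_3(a_3 uw - c_3 w^2)\Big).
\]
Using positivity, discard $-G_1 u^2v$ and $-\gl_2 G_2 uv^2$. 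The genuine obstacle is the coupling $\gl_2 c_2\,vw$: absorbing it into $-\gl_2 b_2 v^2-\gl_3 c_3 w^2$ by Young's inequality would force $c_2^2<4b_2c_3$, i.e.\ precisely \eqref{rst}. The \emph{rescaling} remedy is to take $\gl_3$ large relative to $\gl_2$, namely $\gl_3\ge \gl_2 c_2^2/(b_2 c_3)$, so that $\gl_2 c_2 vw\le \tfrac12\gl_2 b_2 v^2+\tfrac12\gl_3 c_3 w^2$; then the remaining $w^2$–dissipation absorbs $\gl_3 a_3 uw$ at the price of a term $C u^2$, and the remaining $v^2$–dissipation absorbs $b_1 uv$ at the price of another $C u^2$. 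All quadratic–in–$u$ contributions — $a_1 u^2$ plus these two, with combined coefficient $M=M(\gl_2,\gl_3)$ — are then \emph{grouped} and bounded pointwise by the autocatalytic cubic via $Mu^2\le\tfrac12 Fu^3+C(M,F)$ for $u\ge0$, after which $-\tfrac12 F\!\int u^3\le0$ is dropped. This produces
\[
\tfrac12 y' + d_1\|\nb u\|^2 + \gl_2 d_2\|\nb v\|^2 + \gl_3 d_3\|\nb w\|^2 \le C_2,
\]
with $C_2$ depending only on $\gl_2,\gl_3,F$, the remaining rate constants and $|\gw|$, not on $g_0$ or $t$. By the Poincar\'e inequality \eqref{pcr} the diffusion terms dominate $\gb y$ with $\gb=\ga\min\{d_1,d_2,d_3\}$, so $y'+2\gb y\le 2C_2$ and Gronwall gives $y(t)\le y(0)e^{-2\gb t}+C_2/\gb$ for all $t\ge0$.

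For the conclusions, note that $\|g(t)\|^2\le(\min\{1,\gl_2,\gl_3\})^{-1}y(t)$ stays bounded on finite intervals by a quantity depending only on $\|g_0\|$; since the local existence time of Lemma~\ref{L:locs} is bounded below in terms of $\|g_0\|$ only, the standard continuation argument yields a global weak solution on $[0,\infty)$, which remains in $H_+$. As $g\in L^2(0,T;E)$ for every $T$, for a.e.\ $t_0>0$ one has $g(t_0)\in E$; starting from such $t_0$ the analytic–semigroup / mild–solution theory (as in the last paragraph of the proof of Lemma~\ref{L:locs}, cf.\ \cite{SY02}) upgrades $g$ to a strong solution on $[t_0,\infty)$, global strong existence following from a further uniform $E$–estimate obtained by the same grouping device together with the uniform Gronwall lemma; thus $g$ is a strong solution on $(0,\infty)$. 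Letting $t\to\infty$ gives $\limsup_{t\to\infty}\|g(t)\|^2\le K:=(C_2/\gb)/\min\{1,\gl_2,\gl_3\}$, and for any bounded $B\subset H_+$ with $\sup_{g_0\in B}\|g_0\|\le R$ there is $t_0=t_0(R)$ such that $\|g(t)\|^2\le K_1$ for $t\ge t_0$, where $K_1$ is any fixed constant exceeding $K$. Together with the continuous dependence from Lemma~\ref{L:locs} and the positive invariance of $H_+$, this shows $\csg$ is a semiflow on $H_+$ with bounded absorbing set $B_0=\{g\in H_+:\|g\|^2\le K_1\}$.

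The only nonroutine ingredient is the second step, and more precisely the choice of the weights $\gl_2,\gl_3$ and the \emph{order} of the Young splittings: one must pin down $\gl_3/\gl_2$ from the $v$--$w$ coupling first — this is what neutralizes the would-be hypothesis \eqref{rst} — then spend the leftover $v^2$ and $w^2$ dissipation on the mixed terms $b_1 uv$ and $\gl_3 a_3 uw$, and only at the end invoke the cubic $-Fu^3$ to absorb the accumulated $u^2$; carried out in any other order a sign condition reappears (this is the ``coefficient barrier''). Everything else — positivity, Poincar\'e, Gronwall, continuation, parabolic regularization, uniform Gronwall — is standard.
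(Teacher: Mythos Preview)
Your proposal is correct and follows essentially the same route as the paper. The only cosmetic difference is that the paper implements your weighted multiplier by first rescaling $W=(c_2/b_2)w$ and then testing \eqref{eu1}--\eqref{ew1} against $(u,v,(c_2/c_3)W)$, which renders the $(v,W)$-block as the manifestly coercive quadratic form $-b_2\!\intw(v^2 - vW + W^2)\,dx\le -\tfrac{b_2}{2}(\|v\|^2+\|W\|^2)$; this corresponds exactly to your threshold choice $\gl_2=1$, $\gl_3=c_2^2/(b_2c_3)$, and the remaining steps (absorbing $b_1uv$ and $a_3uW$ at the cost of $u^2$, then swallowing $M u^2$ with $-Fu^3$ via Young, then Poincar\'e and Gronwall) are identical.
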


\begin{proof}
It is easy to see that by Proposition \ref{P3} the cone $\mathbb{R}_+^3$ is an invariant region for the Oregonator system \eqref{eu}--\eqref{ew}. Without any further specification, we shall always assume that any initial data satisfies $g_0 = (u_0, v_0, w_0) \in H_+$. By Lemma \ref{L:locs}, there is a maximal interval of existence, denoted by $I_{max} = [0, \tau_{max}), \tau_{max} > 0$, for the corresponding weak solution $g(t) = g(t; g_0)$ such that for any $[0, \tau] \subset I_{max}$, 
\beq \bl{psoln}
	g \in C([0, \tau]; H_+) \cap C^1 ((0, \tau); H_+) \cap L^2 (0, \tau; E_+).
\eeq
By rescaling $w(t, x)$ and letting
\beq \bl{rsw}
	W (t, x) = \frac{c_2}{b_2} w (t, x),
\eeq
the system \eqref{eu}--\eqref{ew} becomes
\begin{align} 
	\frac{\partial u}{\partial t} &= d_1 \gd u + a_1 u + b_1 v - Fu^2 - G_1 uv,  \bl{eu1} \\
	\frac{\partial v}{\partial t} &= d_2 \gd v - b_2 v + b_2 W - G_2 uv,  \bl{ev1} \\
	\frac{b_2}{c_2} \frac{\partial W}{\partial t} &= \frac{d_3 \, b_2}{c_2} \gd W + a_3 u - \frac{c_3 \, b_2}{c_2} W. \bl{ew1}
\end{align}
Take the inner-products $\inpt{\eqref{eu1}, u(t, x)}, \inpt{\eqref{ev1}, v(t, x)}, \inpt{\eqref{ew1}, c_2W(t, x)/c_3}$ and add up the resulting equalities to obtain
\beq \bl{uvw}
	\begin{split}
	&\frac{1}{2} \frac{d}{dt} \left(\| u \|^2 + \|v \|^2 + \frac{b_2}{c_3} \|W \|^2 \right) + \left(d_1 \|\nb u\|^2 + d_2 \|\nb v\|^2 + \frac{d_3 \, b_2}{c_3} \|\nb W\|^2 \right) \\
	= &\, \int_\gw \left(a_1 u^2 + b_1 u v + \frac{a_3 \, c_2}{c_3} u W \right)\, dx - \int_\gw \left(F u^3 + G_1 u^2 v + G_2 u v^2 \right)\, dx \\
	&\, - b_2 \left(\| v \|^2 - \int_\gw v W \, dx + \| W \|^2 \right) \\
	\leq &\, \int_\gw \left(a_1 u^2 + b_1 u v + \frac{a_3 \, c_2}{c_3} u W \right)\, dx - \int_\gw F u^3 \, dx - \frac{b_2}{2} (\| v \|^2 + \|W \|^2) \\
	\leq &\, \int_\gw (M_1 u^2 - F u^3 )\, dx = \int_\gw \left(F^{-2/3} M_1 F^{2/3} u^2 - F u^3 \right)\, dx \\
	\leq &\, \frac{2}{3} \int_\gw F u^3 \, dx + \frac{M_1^3}{3F^2} | \gw | - \int_\gw F u^3 \, dx \leq  \frac{M_1^3}{3F^2} | \gw |,
	\end{split}
\eeq
where
$$
	M_1 = a_1 + \frac{1}{2 b_2} \left(b_1^2 + \left(\frac{a_3 \, c_2}{c_3}\right)^2 \right),
$$
and Young's inequality is used. Therefore, by \eqref{pcr} and letting 
\begin {equation} \label{dm}
	d_0 = \min \{d_1, d_2, d_3\}, \quad M_2 = \frac{c_2^2}{b_2\, c_3},
\end{equation}
we get
\begin{align*}
	&\frac{d}{dt} \left(\|u(t)\|^2 + \|v(t)\|^2 + \frac{b_2}{c_3} \|W(t)\|^2\right) \\
	+ &\, 2\ga d_0 \left(\|u(t)\|^2 + \|v(t)\|^2 + \frac{b_2}{c_3} \|W(t)\|^2 \right) \leq \frac{2M_1^3}{3F^2} |\gw|, \quad \tup{for} \; t \in I_{max}.
\end{align*}
By \eqref{rsw} and the Gronwall inequality it yields that for $t \in I_{max}$,

\beq \bl{L2b}
	\begin{split}
	\min \{1, M_2\} \|g(t; g_0)\|^2 \leq &\,\|u(t)\|^2 + \|v(t)\|^2 + M_2 \|w(t)\|^2 \\[3pt]
	= &\, \|u(t)\|^2 + \|v(t)\|^2 + \frac{b_2}{c_3} \|W(t)\|^2 \\
	\leq &\, e^{- 2\ga d_0 t} \left(\|u_0\|^2 + \|v_0\|^2 + \frac{b_2}{c_3} \|W(0)\|^2 \right) + \frac{M_1^3}{3\ga d_0 F^2} |\gw|\\
	= &\, e^{- 2\ga d_0 t} \left(\|u_0\|^2 + \|v_0\|^2 + M_2 \|w_0\|^2\right) + \frac{M_1^3}{3\ga d_0 F^2} |\gw|.
	\end{split}
\eeq
This shows that the weak solution $g(t; g_0)$ never blows up at any finite time and $I_{max} = [0, \infty)$ for every initial datum. Moreover, it follows that 
\beq  \bl{L2ab}
	\limsup_{t \to \infty} \|g(t; g_0)\|^2 \leq \frac{M_1^3}{3 \ga d_0 F^2 \min \{1, M_2\}} |\gw|.
\eeq
Therefore, \eqref{absb} hols with 
\beq \bl{k1}
	K_1 = \frac{M_1^3}{\ga d_0 F^2 \min \{1, M_2\}} |\gw|.
\eeq
The proof is completed.
\end{proof}

The solution semiflow $\csg$ of the Oregonator evolutionary equation \eqref{eveq} on $H_+$ will be briefly called the \emph{Oregonator semiflow}. In the next lemma, we show that the Oregonator semiflow has the absorbing property further in the product Banach spaces $[L^{2p} (\gw)]^3$ for $1 \leq p \leq 3$.

\begin{lemma} \bl{L:pab}
	For any integer $p, 1 \leq p \leq 3$, there exists a positive constant $K_p$ such that the Oregonator semiflow $\csg$ satisfies the absorbing inequality
\beq  \bl{psup}
	\limsup_{t \to \infty} \|S(t) (u_0, v_0, w_0)\|_{L^{2p}}^{2p} < K_p,
\eeq
for any initial datum $g_0 = (u_0, v_0, w_0) \in H_+$. Therefore, the Oregonator smiflow $\csg$ has the absorbing property in the space $[L^{2p} (\gw)]^3, 1 
\leq p \leq 3$.
\end{lemma}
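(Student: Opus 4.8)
The plan is to run an $L^{2p}$ energy estimate directly on the strong solution and to close it by the same rescaling-and-grouping idea used for Lemma~\ref{L:glsn}. The structural facts that make this work are: the cubic term $-Fu^{2}$ in \eqref{eu} supplies a dissipation $-F\int_\gw u^{2p+1}\,dx$ that absorbs every term in which $u$ appears alone; the autocatalytic cross terms $-G_{1}uv$ in \eqref{eu} and $-G_{2}uv$ in \eqref{ev} have a favourable sign on the positive cone; and attaching a sufficiently large weight to the $w$-equation neutralizes the residual linear coupling of $c_{2}w$ against $a_{3}u$. Fix an integer $p$ with $1\le p\le3$. By Lemma~\ref{L:glsn}, for $g_{0}\in H_{+}$ the weak solution is global and becomes a strong solution on $(0,\infty)$; hence for every $t>0$ we have $g(t)=(u(t),v(t),w(t))\in\Pi=[H^{2}(\gw)\cap H^{1}_{0}(\gw)]^{3}\hookrightarrow[C(\overline{\gw})]^{3}$ (this is where $n\le3$ enters) and $g(t)\in H_{+}$. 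In particular all integrals below are finite, $u^{2p-1},v^{2p-1},w^{2p-1}\in H^{1}_{0}(\gw)$, and $t\mapsto\|u(t)\|_{L^{2p}}^{2p}$, $\|v(t)\|_{L^{2p}}^{2p}$, $\|w(t)\|_{L^{2p}}^{2p}$ are continuously differentiable on $(0,\infty)$. It therefore suffices to establish a differential inequality of the form $\tfrac{d}{dt}Y(t)\le-\mu Y(t)+C|\gw|$ for $t>0$, with $Y$ an equivalent norm for $\|g(t)\|_{L^{2p}}^{2p}$, and then to invoke the Gronwall inequality together with Definition~\ref{D:abs}.

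First I would pick a weight $\lambda>0$ (to be fixed later) and form the combination $\inpt{\eqref{eu},\,u^{2p-1}}+\inpt{\eqref{ev},\,v^{2p-1}}+\inpt{\eqref{ew},\,\lambda w^{2p-1}}$, integrating the Laplacian terms by parts. The diffusion contribution $-(2p-1)\bigl(d_{1}\int_\gw u^{2p-2}|\nb u|^{2}+d_{2}\int_\gw v^{2p-2}|\nb v|^{2}+\lambda d_{3}\int_\gw w^{2p-2}|\nb w|^{2}\bigr)\le0$ is discarded, and positivity of $u$ and $v$ lets me discard $-G_{1}\int_\gw u^{2p}v\,dx\le0$ and $-G_{2}\int_\gw uv^{2p}\,dx\le0$ as well. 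What remains is the estimate
\begin{multline*}
\frac{1}{2p}\frac{d}{dt}\Bigl(\|u\|_{L^{2p}}^{2p}+\|v\|_{L^{2p}}^{2p}+\lambda\|w\|_{L^{2p}}^{2p}\Bigr)\le a_{1}\int_\gw u^{2p}\,dx+b_{1}\int_\gw v\,u^{2p-1}\,dx \\
{}+c_{2}\int_\gw w\,v^{2p-1}\,dx+\lambda a_{3}\int_\gw u\,w^{2p-1}\,dx-F\int_\gw u^{2p+1}\,dx-b_{2}\int_\gw v^{2p}\,dx-\lambda c_{3}\int_\gw w^{2p}\,dx .
\end{multline*}
Each of the four nonnegative terms on the right is controlled by the weighted Young inequality $ab\le\varepsilon a^{q}+C(\varepsilon)b^{q'}$ together with the pointwise bound $s^{2p}\le\varepsilon s^{2p+1}+C_{\varepsilon}$ (for $s\ge0$), which converts spare $u^{2p}$-mass into $u^{2p+1}$-mass: thus $a_{1}\int u^{2p}$ and $b_{1}\int v\,u^{2p-1}$ are $\le\tfrac14F\int u^{2p+1}+\tfrac14b_{2}\int v^{2p}+C_{1}|\gw|$; the term $\lambda a_{3}\int u\,w^{2p-1}$ is $\le\tfrac12\lambda c_{3}\int w^{2p}+\tfrac14F\int u^{2p+1}+C_{2}(\lambda)|\gw|$, its only $\lambda$-dependent residue being harmless $u$-mass; and $c_{2}\int w\,v^{2p-1}$ is $\le\tfrac14b_{2}\int v^{2p}+C_{0}\int w^{2p}$, where the constant $C_{0}=C_{0}(b_{2},c_{2},p)$ does \emph{not} depend on $\lambda$.

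The grouping step then consists of choosing $\lambda\ge\max\{1,\,2C_{0}/c_{3}\}$: the net coefficient of $\int_\gw w^{2p}\,dx$ becomes $-\lambda c_{3}+\tfrac12\lambda c_{3}+C_{0}=-\tfrac12\lambda c_{3}+C_{0}<0$, while the net coefficients of $\int u^{2p+1}$ and of $\int v^{2p}$ are $\le-\tfrac12F<0$ and $\le-\tfrac12b_{2}<0$. This is precisely what breaks the ``coefficient barrier,'' since enlarging $\lambda$ costs nothing (the extra term $\lambda a_{3}\int u\,w^{2p-1}$ only feeds the already oversized $u$-dissipation). Collecting terms yields, for all $t>0$,
$$
\frac{d}{dt}Y(t)+\eta_{1}\int_\gw u^{2p+1}\,dx+\eta_{2}\|v(t)\|_{L^{2p}}^{2p}+\eta_{3}\|w(t)\|_{L^{2p}}^{2p}\le C_{3}|\gw|,
$$
where $Y(t)=\|u(t)\|_{L^{2p}}^{2p}+\|v(t)\|_{L^{2p}}^{2p}+\lambda\|w(t)\|_{L^{2p}}^{2p}$ and $\eta_{1},\eta_{2},\eta_{3},C_{3}>0$. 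A last use of $s^{2p}\le\varepsilon s^{2p+1}+C_{\varepsilon}$ replaces $-\eta_{1}\int u^{2p+1}$ by $-\eta_{1}'\|u\|_{L^{2p}}^{2p}+C_{4}|\gw|$, so $\tfrac{d}{dt}Y\le-\mu Y+C_{5}|\gw|$ with $\mu=\min\{\eta_{1}',\eta_{2},\eta_{3}/\lambda\}>0$; the Gronwall inequality gives $\limsup_{t\to\infty}Y(t)\le C_{5}|\gw|/\mu$, and since $\lambda\ge1$ forces $\|g(t)\|_{L^{2p}}^{2p}\le Y(t)$, the absorbing inequality \eqref{psup} holds with, e.g., $K_{p}=2C_{5}|\gw|/\mu$, whence $\{g\in H_{+}:\|g\|_{L^{2p}}^{2p}\le K_{p}\}$ is an absorbing set in $[L^{2p}(\gw)]^{3}$.

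I expect the main obstacle to be the bookkeeping in the grouping step: arranging the Young-inequality exponents and constants in the right order so that, after the weighting of the $w$-equation by a large $\lambda$, the three diagonal dissipative terms $-F\int u^{2p+1}$, $-b_{2}\int v^{2p}$, $-\lambda c_{3}\int w^{2p}$ still strictly dominate every residual cross term — the delicate point being that $C_{0}$, which controls $c_{2}\int w\,v^{2p-1}$, must be produced with no reference to $\lambda$, so that ``take $\lambda$ larger'' remains a free move. A secondary and routine matter is the rigorous justification of the energy identity (differentiability of $t\mapsto\|u(t)\|_{L^{2p}}^{2p}$ and the integration by parts), which rests on the strong-solution regularity $g(t)\in\Pi\hookrightarrow[L^{\infty}(\gw)]^{3}$ available since $n\le3$; alternatively one may run the estimate along a smooth-in-time approximation of $g$ on $[t_{0},\infty)$ with $t_{0}>0$ and pass to the limit.
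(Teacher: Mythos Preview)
Your argument is correct, but it differs from the paper's in two structural respects. First, the paper does not use a free weight $\lambda$ on the $w$-equation; instead it rescales $W=(c_{2}/b_{2})w$ and multiplies the $W$-equation by the specific factor $c_{2}/c_{3}$, which makes the $v$--$W$ coupling appear as the symmetric block $-b_{2}\int(v^{2p}-v^{2p-1}W+W^{2p})\,dx$, handled in one stroke by Young's inequality. Second, the paper does \emph{not} discard the diffusion terms: it keeps $\|\nabla(u^{p})\|^{2}$, $\|\nabla(v^{p})\|^{2}$, $\|\nabla(W^{p})\|^{2}$ and applies Poincar\'{e} \eqref{pcr} to produce the exponential decay rate, while bounding the entire reaction remainder by a single constant $M_{3}^{7}|\gw|/(7F^{6})$ via $M_{3}u^{2p}-Fu^{2p+1}\le C$. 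By contrast you extract the decay directly from the reaction dissipation $-F\int u^{2p+1}$, $-b_{2}\int v^{2p}$, $-\lambda c_{3}\int w^{2p}$. Your free-weight trick is more flexible (it treats all $p$ at once and would work even without Poincar\'{e}, e.g.\ under Neumann conditions with an additional lower-order damping), while the paper's rescaling yields fully explicit constants and a decay rate tied to the diffusion coefficients; the paper also only carries out $p=3$ in detail and infers $p=2$ afterwards. Your regularity justification via $g(t)\in\Pi\hookrightarrow[L^{\infty}(\gw)]^{3}$ for $t>0$ matches the paper's bootstrap remark.
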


\begin{proof}
	The case for $p = 1$ has been proved in Lemma \ref{L:glsn}. Now we show the lemma for $p = 3$. It then implies that the lemma also holds for $p = 2$.
	
	According to the property \eqref{soln} of the weak solution of the evolutionary equation \eqref{eveq}, here $T_{max} = \infty$ for all solutions, we see that for any given initial status $g_0 = (u_0, v_0, w_0) \in H_+$ there exists a time $t_0 \in (0, 1)$ such that 
$$
	S(t_0) g_0 \in E_+ = [H_0^1 (\gw)]^3 \subset [L^6 (\gw)]^3.
$$
Then the regularity of solutions of parabolic evolutionary equations shown in \cite[theorem 47.6]{SY02} ensures that
$$
	S(\cdot) g_0 \in C([t_0, \infty), E_+) \subset C([t_0, \infty), [L^6 (\gw)]^3),
$$
for space dimension $n \leq 3$. Based on this observation, without loss of generality, in considering the longtime behavior of the solution trajectories we can assume that $g_0 = (u_0, v_0, w_0) \in E_+ \subset [L^6 (\gw)]^3$. Moreover, by the bootstrap argument using the regularity of strong solutions of the parabolic evolutionary equation \eqref{eveq}, we have 
$$
	S(t) g_0 \in D(A) \subset [L^8 (\gw)]^3, \quad \textup{for} \; \, t > 0.
$$

Taking the $L^2$ inner-product $\inpt{\eqref{eu1}, u^5 (t, \cdot)}$, for $t > 0$, we get
\begin{align*}
	&\frac{1}{6} \frac{d}{dt} \int_\gw u^6 (t, x) \, dx + 5d_1 \|u^2 (t, \cdot) \nb u(t, \cdot)\|^2 \\
	= &\, a_1 \int_\gw u^6 (t, x) \, dx + b_1 \int_\gw u^5 (t, x) v(t, x)\, dx - F \int_\gw u^7 (t, x)\, dx - G_1 \int_\gw u^6 (t, x) v(t, x)\, dx.
\end{align*} 
Taking the $L^2$ inner-product $\inpt{\eqref{ev1}, v^5 (t, \cdot)}$, for $t > 0$, we get
\begin{align*}
	&\frac{1}{6} \frac{d}{dt} \int_\gw v^6 (t, x) \, dx + 5d_2 \|v^2 (t, \cdot) \nb v(t, \cdot)\|^2 \\
	= &\, - b_2 \int_\gw v^6 (t, x) \, dx + b_2 \int_\gw v^5 (t, x) W(t, x)\, dx - G_2 \int_\gw u (t, x) v^6 (t, x)\, dx.
\end{align*} 
Taking the $L^2$ inner-product $\inpt{\eqref{ew1}, c_2 W^5 (t, \cdot)/c_3}$, for $t > 0$, we get
\begin{align*}
	&\frac{b_2}{6\, c_3} \frac{d}{dt} \int_\gw W^6 (t, x) \, dx + \frac{5\, d_3 b_2}{c_3} \|W^2 (t, \cdot) \nb W(t, \cdot)\|^2 \\
	=  &\, \frac{a_3 \, c_2}{c_3} \int_\gw u (t, x) W^5 (t, x) \, dx - b_2 \int_\gw W^6 (t, x)\, dx. 
\end{align*}
Add up the above three equalities to obtain
\beq \bl{6es}
	\begin{split}
	& \frac{1}{6} \frac{d}{dt} \left(\|u(t, \cdot)\|_{L^6}^6 + \|v(t, \cdot)\|_{L^6}^6 + \frac{b_2}{c_3} \|W(t, \cdot)\|_{L^6}^6 \right)  \\
	&{}+ 5 \left(d_1 \|u^2 \nb u \|^2 + d_2 \|v^2 \nb v\|^2 + \frac{d_3 b_2}{c_3} \|W^2 \nb W\|^2\right) \\
	\leq &\,  a_1 \int_\gw u^6 \, dx + b_1 \int_\gw u^5 v \, dx - F \int_\gw u^7 (t, x)\, dx + \frac{a_3\, c_2}{c_3} \int_\gw u \, W^5 \, dx \\
	&{} - b_2 \int_\gw (v^6 - v^5\, W + W^6 )\, dx,
	\end{split}
\eeq
where, by Young's inequality, 
$$
	- b_2 \int_\gw (v^6 - v^5\, W + W^6 )\, dx \leq - b_2 \int_\gw (v^6 - \frac{5}{6} v^6 - \frac{1}{6} W^6 + W^6 )\, dx = - \frac{b_2}{6} \int_\gw (v^6 + 5W^6 )\, dx,
$$
and
\begin{align*}
	 b_1 \int_\gw u^5 v \, dx &\leq \frac{b_2}{6} \int_\gw v^6 \, dx + \frac{5\, b_1^{6/5}}{6\, b_2^{1/5}} \int_\gw u^6 \, dx, \\
	 \frac{a_3\, c_2}{c_3} \int_\gw u \, W^5 \, dx &\leq \frac{5\, b_2}{6} \int_\gw W^6 \, dx + \frac{1}{6\, b_2^5} \left(\frac{a_3\, c_2}{c_3}\right)^6 \int_\gw u^6 \, dx.
\end{align*}
Substituting the above three inequalities into \eqref{6es}, we have
\beq \bl{6ES}
	\begin{split}
	 &\frac{1}{6} \frac{d}{dt} \left(\|u\|_{L^6}^6 + \|v\|_{L^6}^6 + \frac{b_2}{c_3} \|W\|_{L^6}^6 \right) + \frac{5}{3} \left(d_1 \| \nb (u^3) \|^2 + d_2 \| \nb (v^3)\|^2 + \frac{d_3\, b_2}{c_3} \| \nb (W^3) \|^2\right) \\
	\leq &\, \int_\gw \left(M_3 u^6 - F u^7 \right)\, dx \leq \frac{M_3^7}{7F^6} |\gw| + \frac{6}{7} \int_\gw F u^7 \, dx - F \int_\gw u^7\, dx \leq \frac{M_3^7}{7F^6} |\gw| ,
	\end{split}
\eeq
where
$$
	M_3 = a_1 +  \frac{5\, b_1^{6/5}}{6\, b_2^{1/5}} + \frac{1}{6\, b_2^5} \left(\frac{a_3\, c_2}{c_3}\right)^6.
$$
It follows that, by \eqref{pcr} and \eqref{dm},
\begin{align*}
	&\frac{d}{dt} \left(\|u\|_{L^6}^6 + \|v\|_{L^6}^6 + \frac{b_2}{c_3} \|W\|_{L^6}^6 \right) + 10\, \ga \, d_0 \left(\|u\|_{L^6}^6 + \|v\|_{L^6}^6 + \frac{b_2}{c_3} \|W\|_{L^6}^6 \right) \\[3pt]
	\leq &\, \frac{d}{dt} \left(\|u\|_{L^6}^6 + \|v\|_{L^6}^6 + \frac{b_2}{c_3} \|W\|_{L^6}^6 \right) + 10 \left(d_1 \| \nb u^3 \|^2 + d_2 \| \nb v^3\|^2 + \frac{d_3\, b_2}{c_3} \| \nb W^3 \|^2\right) \\[3pt]
	\leq &\, \frac{6\, M_3^7}{7\, F^6} |\gw| <  \frac{M_3^7}{F^6} |\gw| , \quad t > 0.
\end{align*}
By \eqref{rsw}, we have
$$
	\frac{b_2}{c_3} \| W \|_{L^6}^6 = M_4 \| w \|_{L^6}^6, \quad \tup{where} \; M_4 = \frac{c_2^6}{b_2^5 \, c_3},
$$
and, by the Gronwall inequality, we end up with
\beq \bl{6bd}
	\begin{split}
	&\| (u(t, \cdot), v(t, \cdot), w(t, \cdot))\|_{L^6}^6  \\[5pt]
	\leq &\, \frac{\max \{1, M_4\}}{\min \{1,  M_4\}} e^{-10 \ga d_0 t} \|(u_0, v_0, w_0)\|_{L^6}^6 + \frac{M_3^7}{10 \gamma d_0 F^6 \min \{1, M_4\}} |\gw|,  \quad t \geq 0,
	\end{split}
\eeq
so that
\beq \bl{6lim}
	\limsup_{t \to \infty} \|S(t) (u_0, v_0, w_0) \|_{L^6}^6 < K_3,
\eeq
where
$$
	  K_3 =  \frac{M_3^7}{\gamma d_0 F^6 \min \{1, M_4\}} |\gw|.
$$

Similarly, or as a consequence of \eqref{6lim}, one can show that there exists a positive constant $K_2 > 0$ such that
\beq \bl{4lim}
	\limsup_{t \to \infty} \|S(t) (u_0, v_0, w_0) \|_{L^4}^4 < K_2.
\eeq
Thus the proof is completed.
\end{proof}

\section{\textbf{Asymptotic Compactness and Global Attractor}}

In this section we shall prove that the Oregonator semiflow is asymptotically compact in $H$ and has a global attractor.

\begin{lemma} \label{L:ac}
	The Oregonator semiflow $\csg$ is asymptotically compact in the invariant cone $H_+$ of the phase space $H$.
\end{lemma}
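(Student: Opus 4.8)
\medskip

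The plan is to use the standard compactness criterion: since bounded orbits enter the absorbing set $B_0$ (Lemma \ref{L:glsn}) after a finite time, it suffices to take a bounded sequence $\{g_{0,n}\} \subset H_+$ and times $t_n \to \infty$, and show that $\{S(t_n) g_{0,n}\}$ has a convergent subsequence in $H$. The key idea is to decompose the solution semigroup, for each orbit, into a part that decays exponentially in $E$ and a part that is uniformly bounded in the more regular space $E$ (or $\Pi$), and then invoke the compact embedding $E \hookrightarrow H$. Concretely, I would split $S(t) = S_1(t) + S_2(t)$ where $S_1(t) g_0$ solves the linear homogeneous equation $\partial_t y = Ay$, $y(0) = g_0$, and $S_2(t) g_0$ solves $\partial_t z = Az + f(g(t))$, $z(0) = 0$. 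The first piece satisfies $\|S_1(t) g_0\|_E \leq C e^{-\gamma d_0 t} \|g_0\|$ by analyticity of $\{e^{At}\}$, hence $\|S_1(t_n) g_{0,n}\|_E \to 0$ since $\{g_{0,n}\}$ is bounded. For the second piece, I would show $\{S_2(t_n) g_{0,n}\}$ is bounded in $E$ uniformly in $n$: by Lemma \ref{L:pab} the orbits are eventually bounded in $[L^6(\gw)]^3$, so the nonlinearity $f(g(t))$ — which is at worst quadratic, hence controlled by $L^4$ and $L^6$ norms — is eventually bounded in $H$; then a uniform Gronwall / variation-of-constants estimate for the analytic semigroup gives a uniform $E$-bound on $S_2(t_n) g_{0,n}$ for $t_n$ large. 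By the compact embedding $E \hookrightarrow H$ (Rellich–Kondrachov on the bounded Lipschitz domain), a subsequence of $\{S_2(t_{n_k}) g_{0,n_k}\}$ converges in $H$; combined with $S_1(t_{n_k}) g_{0,n_k} \to 0$ in $H$, this gives the desired convergence of $\{S(t_{n_k}) g_{0,n_k}\}$ in $H$.

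\medskip

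The key estimate to make precise is the uniform-in-time bound on $S_2$ in the $E$-norm. I would proceed by first fixing a large $T$ so that for $t \geq T$ all the orbits starting from the bounded set have entered the $L^6$-absorbing ball, giving $\sup_{t \geq T} \|f(g_n(t))\|_H \leq R$ for a constant $R$ independent of $n$. Then, writing $z_n(t) = S_2(t) g_{0,n} = \int_0^t e^{A(t-s)} f(g_n(s))\, ds$, I split the integral at $s = T$: the contribution from $[0,T]$ is $\int_0^T e^{A(t-s)} f(g_n(s))\, ds$, which for $t \geq 2T$ is acted on by $e^{A(t - s)}$ with $t - s \geq T$, so using $\|e^{A\sigma}\|_{\mathcal{L}(H,E)} \leq C \sigma^{-1/2} e^{-\gamma d_0 \sigma}$ and the fact that $\int_0^T \|f(g_n(s))\|_H\, ds$ is bounded uniformly in $n$ (via the $L^2(0,T;H)$ bound on $f(g_n)$ obtained exactly as in the proof of Lemma \ref{L:locs}, pieced together over unit intervals using the eventual $L^6$ bound for $s \in [1,T]$ and the basic a priori estimates near $s = 0$), this contribution is bounded in $E$. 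The contribution from $[T, t]$ is $\int_T^t e^{A(t-s)} f(g_n(s))\, ds$, bounded in $E$-norm by $R \int_T^t C (t-s)^{-1/2} e^{-\gamma d_0 (t-s)}\, ds \leq R C \int_0^\infty \sigma^{-1/2} e^{-\gamma d_0 \sigma}\, d\sigma < \infty$. Hence $\sup_n \sup_{t \geq 2T} \|z_n(t)\|_E < \infty$, as required.

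\medskip

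The main obstacle, and the place where care is needed, is exactly this uniform $E$-bound: one must be sure that the constant $R$ bounding $\|f(g(t))\|_H$ for large $t$ is genuinely independent of the (bounded) initial data, which is where Lemma \ref{L:pab} — the $L^{2p}$ absorbing property for $p = 1, 2, 3$ — does the essential work, since the quadratic terms $u^2$, $uv$ in $f$ are estimated in $L^2$ by $\|u\|_{L^4}^2$ and $\|u\|_{L^4}\|v\|_{L^4}$. A secondary technical point is handling the short-time singularity $\sigma^{-1/2}$ of $\|e^{A\sigma}\|_{\mathcal{L}(H,E)}$, which is integrable and causes no difficulty, and ensuring the splitting of $S(t)$ is legitimate at the level of weak/strong solutions — this is fine because, by Lemma \ref{L:locs} and the regularity in \eqref{soln}, the orbit is a strong solution for $t > 0$ and the variation-of-constants formula holds. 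An alternative route, avoiding the explicit semigroup splitting, is to establish a uniform Gronwall-type differential inequality for $\frac{d}{dt}(d_1\|\nabla u\|^2 + d_2\|\nabla v\|^2 + \tfrac{d_3 b_2}{c_3}\|\nabla W\|^2)$ using the $L^6$ absorbing bounds to control the nonlinear terms after multiplying the equations by $-\gd u$, $-\gd v$, $-\gd W$; this yields directly that the orbit is eventually bounded in $E$, and then asymptotic compactness follows from $E \hookrightarrow H$ compact together with the continuity of $S(t)$ — I would present whichever is cleaner, but the semigroup-splitting argument is the more robust and is the one I would carry out in detail.
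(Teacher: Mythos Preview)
Your proposal is correct, but it takes a genuinely different route from the paper. The paper does \emph{not} use the semigroup splitting $S=S_1+S_2$ nor does it invoke Lemma~\ref{L:pab} (the $L^{2p}$ absorbing property) at all. Instead, the paper multiplies the three equations (in the rescaled $W$ form) by $-\Delta u$, $-\Delta v$, $-(c_2/c_3)\Delta W$ respectively, sums, and obtains an inequality of the shape
\[
\frac{d}{dt}\,\beta(t)\;\leq\; C_1\bigl(\|u\|^2+\|v\|^2\bigr)+C_2\bigl(\|u\|_{L^4}^4+\|v\|_{L^4}^4\bigr),
\qquad \beta(t)=\|(\nabla u,\nabla v,\sqrt{M_2}\,\nabla w)\|^2.
\]
Then, rather than bounding the $L^4$ terms via an $L^4$-absorbing set, the paper uses the Sobolev embedding $\|\varphi\|_{L^4}\leq\eta\|\nabla\varphi\|$ to rewrite the right-hand side as $\rho(t)\beta(t)+h$ with $\rho(t)=C_2\eta^4\beta(t)$, and applies the \emph{uniform Gronwall inequality}, feeding in the bound $\int_t^{t+1}\beta(s)\,ds\leq M_5$ that comes directly from the $L^2$ dissipation estimate \eqref{uvw} in Lemma~\ref{L:glsn}. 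This yields a uniform $E$-bound on orbits for $t>T_0+T_1+1$, and the compact embedding $E\hookrightarrow H$ finishes the job.

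Your approach buys modularity: once Lemma~\ref{L:pab} is in hand, the variation-of-constants argument is routine and transfers easily to other systems. The paper's approach buys self-containment and economy: it needs only the $H$-absorbing set and the time-integrated $E$-bound already obtained in Lemma~\ref{L:glsn}, bypassing the $L^6$ machinery entirely. Your ``alternative route'' (multiply by $-\Delta$ and then invoke the $L^6$ bounds) is close in spirit to the paper, but note the paper sidesteps even that: the $L^4$ terms are absorbed into $\beta^2$ via Sobolev, and it is the integrability of $\beta$ over unit time intervals---not an $L^{2p}$ absorbing ball---that makes the uniform Gronwall work. One minor point to tighten in your write-up: the uniform $L^6$ bound for $t\geq T$ over an $H$-bounded set requires a short argument (pick $t_0\in(0,1)$ with $\|g(t_0)\|_E$ controlled via $\int_0^1\|g(s)\|_E^2\,ds\leq C(B)$, then run \eqref{6bd} from $t_0$), since Lemma~\ref{L:pab} as stated gives absorption for $[L^{2p}]^3$-bounded sets.
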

\begin{proof}
Taking the $L^2$ inner-product $\inpt{\eqref{eu1}, -\gd u}$, by the homogeneous Dirichlet boundary conditions, we get
\beq \bl{dtu}
	\begin{split}
	&\frac{1}{2} \frac{d}{dt} \|\nb u \|^2 + d_1 \|\gd u \|^2 \\
	= &\, - \int_\gw (a_1 u + b_1v) \gd u \, dx - 2F \int_\gw u |\nb u |^2 \, dx + G_1 \int_\gw uv \gd u \, dx \\
	\leq &\, \left(\frac{d_1}{4} + \frac{d_1}{4} + \frac{d_1}{2}\right) \|\gd u\|^2 + \frac{1}{d_1} \int_\gw \left(a_1^2 u^2 + b_1^2 v^2 + \frac{G_1^2}{2} u^2 v^2 \right) dx  \\
	\leq &\, d_1 \|\gd u\|^2 + \frac{1}{d_1} (a_1^2 + b_1^2)(\|u\|^2 + \|v\|^2 ) + \frac{G_1^2}{4 d_1} \left(\| u \|_{L^4}^4 + \| v \|_{L^4}^4 \right), \quad t > 0.
	\end{split}
\eeq
Taking the $L^2$ inner-product $\inpt{\eqref{ev1}, -\gd v}$, similarly we get
\beq \bl{dtv}
	\begin{split}
	&\frac{1}{2} \frac{d}{dt} \|\nb v \|^2 + d_2 \|\gd v \|^2 \\
	= &\, - b_2 \int_\gw (|\nb v |^2 - \nb v \cdot \nb W)\, dx  + G_2 \int_\gw uv \gd v \, dx \\
	\leq &\, - b_2 \int_\gw (|\nb v |^2 - \nb v \cdot \nb W)\, dx  +  \frac{d_2}{2} \|\gd v \|^2 + \frac{G_2^2}{2 d_2} \int_\gw u^2 v^2 \, dx \\
	\leq &\, - b_2 \int_\gw (|\nb v |^2 - \nb v \cdot \nb W)\, dx  +  \frac{d_2}{2} \|\gd v \|^2 + \frac{G_2^2}{4 d_2} \left(\| u \|_{L^4}^4 + \| v \|_{L^4}^4 \right), \quad t > 0.
	\end{split}
\eeq
Taking the $L^2$ inner-product $\inpt{\eqref{ew1}, - c_2\gd W /c_3}$, we have
\beq \bl{dtw}
	\begin{split}
	&\frac{b_2}{2 c_3} \frac{d}{dt} \|\nb W \|^2 + \frac{d_3 \, b_2}{c_3} \|\gd W \|^2 \\
	= &\, - \frac{a_3 \, c_2}{c_3} \int_\gw u \, \gd W \, dx - b_2 \int_\gw |\nb W |^2 \, dx \\
	\leq &\, \frac{c_2}{c_3} \left(\frac{d_3 b_2}{c_2} \| \gd W \|^2 + \frac{c_2}{4 d_3 b_2} \int_\gw a_3^2 u^2 \, dx \right)  - b_2 \int_\gw |\nb W |^2 \, dx \\
	= &\,  \frac{d_3 \, b_2}{c_3} \|\gd W \|^2 + \frac{a_3^2 \, c_2^2}{4 d_3\, b_2\, c_3} \int_\gw u^2 \, dx - b_2 \int_\gw |\nb W |^2 \, dx, \quad t > 0.
	\end{split}
\eeq
Since $\|\nb W \|^2 = (c_2 / b_2)^2 \|\nb w \|^2$ and
$$
	- b_2 \int_\gw (|\nb v |^2 - \nb v \cdot \nb W + |\nb W |^2)\, dx \leq 0,
$$
summing up \eqref{dtu}, \eqref{dtv} and \eqref{dtw} we obtain the following inequality
\beq \bl{dts}
	\begin{split}
	&\frac{d}{dt} \left(\|\nb u \|^2 + \|\nb v \|^2 + \frac{c_2^2}{b_2 \, c_3} \|\nb w \|^2\right) \\
	\leq &\, \frac{2}{d_1} (a_1^2 + b_1^2)(\|u\|^2 + \|v\|^2 )  + \frac{a_3^2 \, c_2^2}{2 d_3\, b_2\, c_3} \| u \|^2 + \left(\frac{G_1^2}{2d_1} + \frac{G_2^2}{2d_2}\right)\left(\| u \|_{L^4}^4 + \| v \|_{L^4}^4 \right), 
	\end{split}
\eeq
for $t > 0$. Note that we have taken $\|\nb \varphi \|$ as the norm of $E$ and there is a positive constant $\eta > 0$ associated with the Sobolev imbedding inequality
\beq \bl{4imb}
	\| \varphi \|_{L^4 (\gw)} \leq \eta \| \varphi \|_E = \eta \|\nb \varphi \|, \quad \textup{for any} \; \varphi \in E.
\eeq
Since $B_0$ in \eqref{absb} is an absorbing ball, there is a finite time $T_0 > 0$ depending only on $B_0$ such that $S(t) B_0 \subset B_0$ for all $t > T_0$. Moreover, from \eqref{L2b} with $t \in [0, \infty)$ and \eqref{k1} we can assert that there exists a finite time $T_1 > 0$ depending only on $B_0$ such that 
\begin{equation} \label{tk1}
	\| u(t) \|^2 + \|v(t) \|^2 + \|w(t)\|^2 = \|g(t; g_0)\|^2 \leq K_1, \quad \textup{for any} \; t > T_1, \; g_0 \in B_0.
\end{equation}
Then \eqref{dts} along with these facts shows that for any initial datum $g_0 \in B_0$ one has
\beq \bl{ugr}
	\begin{split}
	&\frac{d}{dt} \|(\nb u, \nb v, \sqrt{M_2} \nb w)\|^2 \\
	\leq &\, \left(\frac{G_1^2}{2d_1} + \frac{G_2^2}{2d_2}\right) \eta^4 (\|\nb u \|^4 + \|\nb v \|^4) + \left(\frac{2}{d_1} (a_1^2 + b_1^2) + \frac{a_3^2 \, c_2^2}{2 d_3\, b_2\, c_3}\right) K_1 \\
	 \leq &\, \left(\frac{G_1^2}{2d_1} + \frac{G_2^2}{2d_2}\right) \eta^4 \| (\nb u, \nb v, \sqrt{M_2} \nb w) \|^4 + \left(\frac{2}{d_1} (a_1^2 + b_1^2) + \frac{a_3^2 \, c_2^2}{2 d_3\, b_2\, c_3}\right) K_1, \;\, t > T_0 + T_1, 
	\end{split}
\eeq
where $M_2$ is shown in \eqref{dm}. The differential inequality \eqref{ugr} can be written as 
\beq \bl{Grw}
	\frac{d}{dt} \beta \leq \rho \, \beta + h, \quad \tup{for} \; t > T_0 + T_1, \; g_0 \in B_0,
\eeq
where
$$
	\beta (t) = \| (\nb u, \nb v, \sqrt{M_2} \nb w) \|^2, \quad \rho (t) = \left(\frac{G_1^2}{2d_1} + \frac{G_2^2}{2d_2}\right) \eta^4 \beta (t), 
$$
and
$$
	 h(t) = K_1 \left(\frac{2}{d_1} (a_1^2 + b_1^2) + \frac{a_3^2 \, c_2^2}{2 d_3\, b_2\, c_3}\right).
$$
From \eqref{uvw}, \eqref{dm} and \eqref{tk1} we see that, for any given initial status $g_0 = (u_0, v_0, w_0) \in B_0$, 
\beq \bl{ttb}
	\begin{split}
	\int_t^{t +1} \beta (s) \, ds &\leq \frac{1}{d_0} \left(\max \{1, M_2\}\|(u(t), v(t), w(t))\|^2 + \frac{M_1^3}{F^2} |\gw|\right) \\
	& \leq \frac{1}{d_0} \left(K_1 \max \{1, M_2\} +  \frac{M_1^3}{F^2} |\gw| \right), \; \textup{for} \; t > T_0 + T_1, \; g_0 \in B_0.
	\end{split}
\eeq
Let
\begin{equation} \label{m5}
	M_5 = \frac{1}{d_0} \left(K_1 \max \{1, M_2\} +  \frac{M_1^3}{F^2} |\gw| \right).
\end{equation}
Then we can apply the uniform Gronwall inequality, cf. \cite{rT88, SY02}, to \eqref{Grw} and use \eqref{ttb} and \eqref{m5} to get
\beq \bl{ascc}
	\begin{split}
	&\|(\nb u (t, \cdot), \nb v(t, \cdot), \nb w (t, \cdot))\|^2 \leq \frac{1}{\min \{1, M_2\}} \, \beta (t) \\[2pt]
	\leq \frac{1}{\min \{1, M_2\}} &\left(M_5 +  K_1 \left(\frac{2}{d_1} (a_1^2 + b_1^2) + \frac{a_3^2 \, c_2^2}{2 d_3\, b_2\, c_3}\right)\right)\exp \left(\eta^4 M_5\left(\frac{G_1^2}{2d_1} + \frac{G_2^2}{2d_2}\right) \right), 
	\end{split}
\eeq
for any $t > T_0 + T_1 + 1,  g_0 \in B_0$.

The boundedness shown by \eqref{ascc} combined with the absorbing property shown in Lemma \ref{L:glsn} confirms that, for any given bounded set $B \subset H$, there exists a finite time $T(B) > 0$ such that $\{S(t) B: t > T(B)\}$ is a bounded set in $E$, which in turn is a precompact set in $H$ due to that $E$ is compactly imbedded in $H$. Therefore, the Oregonator semiflow $\csg$ is asymptotically compact in $H_+$.
\end{proof}

Finally we can prove the main result on the existence of a global attractor for $\csg$. 
\begin{theorem} \label{T:main} 
Given any positive parameters in the Oregonator system \eqref{eu}--\eqref{ew} with the Dirichlet boundary condition \eqref{dbc}, there exists a global attractor $\ms{A}$ in $H_+$ for the Oregonator semiflow $\csg$ generated by \eqref{eveq}.
\end{theorem}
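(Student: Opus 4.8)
The plan is to obtain Theorem~\ref{T:main} as an immediate consequence of the abstract criterion in Proposition~\ref{P:exga}: once the two structural hypotheses — existence of a bounded absorbing set and asymptotic compactness — are verified, the conclusion follows with $\ms A = \om(B_0)$. Both hypotheses are already available from the preceding lemmas, so the final step is essentially assembly.

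First I would record that $\csg$ is genuinely a semiflow on the invariant region $\mX = H_+$. The cone $H_+$ is closed in $H$ and, by Proposition~\ref{P3}, positively invariant under the flow of \eqref{eveq}. For each $g_0 \in H_+$, Lemma~\ref{L:glsn} provides a unique \emph{global} weak solution $g(t;g_0)$ with $g(t;g_0) \in H_+$ for all $t \geq 0$, so $S(t)g_0 := g(t;g_0)$ is well defined for every $t \geq 0$; uniqueness yields the semigroup identities $S(0) = \textup{Id}$ and $S(t+s) = S(t)S(s)$, and the continuous dependence on initial data established in Lemma~\ref{L:locs}, together with the global a priori bound \eqref{L2b}, gives joint continuity of $(t,g_0) \mapsto S(t)g_0$. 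Thus $\csg$ is a semiflow on $H_+$ in the sense required by Proposition~\ref{P:exga}.

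Next, Lemma~\ref{L:glsn} furnishes condition (i): the ball $B_0 = \{g \in H_+ : \|g\|^2 \leq K_1\}$ with $K_1$ as in \eqref{k1} is a bounded absorbing set. Indeed, \eqref{L2ab} shows $\limsup_{t\to\infty}\|S(t)g_0\|^2 \leq K_1/2$, and in \eqref{L2b} the transient term $e^{-2\ga d_0 t}\bigl(\|u_0\|^2+\|v_0\|^2+M_2\|w_0\|^2\bigr)$ decays \emph{uniformly} over $g_0$ in any bounded subset $B \subset H_+$; hence $S(t)B \subset B_0$ for all $t$ beyond a finite time depending only on the diameter of $B$. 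Then Lemma~\ref{L:ac} supplies condition (ii): the semiflow is asymptotically compact in $H_+$ — in fact the stronger statement proved there is that, after a finite time depending only on $B_0$, the orbit of any bounded set lies in a bounded subset of $E = [H_0^1(\gw)]^3$ (the explicit $E$-bound \eqref{ascc}), which is compactly embedded in $H$.

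With both hypotheses checked, Proposition~\ref{P:exga} applies verbatim and produces the global attractor
$\ms A = \om(B_0) = \bigcap_{\tau \geq 0}\textup{Cl}_H \bigcup_{t \geq \tau} S(t)B_0$
in $H_+$: nonempty, compact, fully invariant ($S(t)\ms A = \ms A$), and attracting every bounded subset of $H_+$ in the Hausdorff semidistance; since the positive parameters of \eqref{eu}--\eqref{ew} were arbitrary, the theorem follows. I do not anticipate a serious obstacle at this stage — the conceptual difficulty, namely circumventing the failure of the asymptotically dissipative sign condition and the coefficient barrier via the rescaling $W = (c_2/b_2)w$ and the grouped energy estimates, has already been handled inside Lemmas~\ref{L:glsn} and~\ref{L:ac}. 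The only point that warrants attention is the bookkeeping: one must make sure the constants $K_1$, the times $T_0, T_1$, and the $E$-bound \eqref{ascc} depend only on $B_0$ (and hence, through the absorbing property, only on the given bounded set), never on the individual trajectory — which is exactly how those two lemmas were arranged.
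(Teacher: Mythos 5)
Your proposal is correct and follows exactly the paper's own argument: the theorem is obtained by feeding Lemma~\ref{L:glsn} (bounded absorbing set $B_0$) and Lemma~\ref{L:ac} (asymptotic compactness) into Proposition~\ref{P:exga} with $\mX = H_+$, and your extra remarks on the semigroup property and uniformity of constants are consistent with how those lemmas are arranged. The only trivial slip is that \eqref{L2ab} gives $\limsup_{t\to\infty}\|S(t)g_0\|^2 \leq K_1/3$ rather than $K_1/2$, which does not affect the argument.
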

\begin{proof}
Lemma \ref{L:glsn} and Lemma \ref{L:ac} demonstrate that the two conditions in Proposition \ref{P:exga} are satisfied by the Oregonator semiflow $\csg$, where we let $\mX = H_+$. Therefore, we reach the conclusion.
\end{proof}
We emphasize that here the existence of global attractor in $H_+$ is established unconditionally for any given positive parameters involved in this Oregonator system.

\section{\textbf{Finite Dimensionality of the Global Attractor}}

Consider the Hausdorff dimension and fractal dimension of the global attractor $\ms{A}$ of the Oregonator semiflow $\csg$ in $H_+$. Let $q_{m} = \limsup_{t \to \infty} \, q_{m} (t)$, where, cf. \cite{rT88},
\begin{equation} \label{trq} 
	q_{m} (t) = \sup_{g_0 \in \ms{A}} \;  \, \sup_{\substack{g_{i} \in H, \|g_{i} \| = 1\\ i = 1, \cdots, m}} \; \, \left( \frac{1}{t} \int_{0}^{t} \textup{Tr}  \left( A + f^{\prime} (S(\tau) g_0 ) \right) \circ Q_{m} (\tau) \, d\tau \right),
\end{equation}
in which  $Q_{m} (t)$ stands for the orthogonal projection of space $H$ on the subspace spanned by $G_1 (t), \cdots, G_{m} (t)$, with $G_{i} (t) = L(S(t), g_0)g_{i}, i = 1, \cdots, m$. Here $f^{\prime}(S(\tau)g_0)$ is the Fr\'{e}chet derivative of the map $f$ at  $S(\tau)g_0$, and $L(S(t), g_0)$ is the Fr\'{e}chet derivative of the map $S(t)$ at $g_0$, with $t$ fixed. The definitions of Hausdorff dimension and fractal dimension can be seen in \cite[Chapter 5]{rT88} as well as the following proposition.

\begin{proposition} \label{P:HFd}
If there is an integer $m$ such that $q_{m} < 0$, then the Hausdorff dimension $d_{H} (\ms{A})$ and the fractal dimension $d_{F} (\ms{A})$ of $\ms{A}$ satisfy
\begin{equation}  \label{hfd}
	d_{H} (\ms{A}) \leq m,  \quad \textup{and} \quad d_{F} (\ms{A}) \leq m \max_{1 \leq j \leq m - 1} \left( 1 + \frac{(q_{j})_{+}}{| q_{m} |} \right) \leq 2m. 
\end{equation}
\end{proposition}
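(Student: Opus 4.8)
Proposition \ref{P:HFd} is a general result in the theory of infinite-dimensional dissipative dynamical systems, and the plan is to follow the trace-and-volume-contraction scheme of Constantin--Foias--Temam as presented in \cite[Chapter 5]{rT88}. The first step is to verify that the Oregonator semiflow $\csg$ is uniformly quasi-differentiable on the global attractor $\ms{A}$: for each $t > 0$ and $g_0 \in \ms{A}$, the map $g_0 \mapsto S(t) g_0$ admits a bounded linear differential $L(S(t), g_0)$ on $H$, obtained by solving the linearized (variational) equation
\begin{equation*}
	\frac{dU}{dt} = \big( A + f'(S(t) g_0) \big) U, \qquad U(0) = \xi \in H,
\end{equation*}
with a remainder that is uniformly small over $g_0 \in \ms{A}$. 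This would use the boundedness of $\ms{A}$ in $E$ established in Section 4 (and strengthened in Section 6), the smoothness and local Lipschitz continuity of $f$, and the parabolic smoothing of $A$; since $A$ has compact resolvent and $A + f'(S(t) g_0)$ is a relatively compact perturbation of the sectorial operator $A$, the variational flow is well posed and the traces in \eqref{trq} are finite and uniformly bounded on $\ms{A}$.

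The second step is to invoke the Liouville-type formula for $m$-dimensional volume elements: if $U_1(t), \dots, U_m(t)$ solve the variational equation with initial data $\xi_1, \dots, \xi_m \in H$, then
\begin{equation*}
	\| U_1(t) \wedge \cdots \wedge U_m(t) \|_{\wedge^m H} = \| \xi_1 \wedge \cdots \wedge \xi_m \|_{\wedge^m H} \, \exp \left( \int_0^t \tup{Tr} \big( ( A + f'(S(\tau) g_0) ) \circ Q_m(\tau) \big)\, d\tau \right),
\end{equation*}
where $Q_m(\tau)$ is the orthogonal projection of $H$ onto $\tup{span}\{ U_1(\tau), \dots, U_m(\tau) \}$. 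Taking the supremum over unit vectors $\xi_i$ and over $g_0 \in \ms{A}$ and then passing to $\limsup_{t \to \infty}$, we obtain that the uniform $m$-dimensional Lyapunov number of the differential $L(S(t), \cdot)$ is bounded above by $q_m$; hence the hypothesis $q_m < 0$ forces the $m$-volume element of the linearized flow to contract uniformly and exponentially on $\ms{A}$.

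The third step is to insert this uniform volume contraction into the abstract dimension lemmas of \cite[Chapter 5]{rT88}. Uniform exponential decay of $m$-dimensional volumes under $L(S(t), \cdot)$ implies, via a covering argument, that the $m$-dimensional Hausdorff measure of $\ms{A}$ is zero, hence $d_H (\ms{A}) \leq m$. A refined covering estimate that tracks how volumes of intermediate dimension $j < m$ may expand (at a rate governed by $(q_j)_+$) while the $m$-volume contracts (at rate $|q_m|$) then yields $d_F (\ms{A}) \leq m \max_{1 \leq j \leq m-1} \big( 1 + (q_j)_+/|q_m| \big)$, and the bound $\leq 2m$ follows from the concavity of the map $j \mapsto q_j$ --- equivalently, the global Lyapunov exponents $q_j - q_{j-1}$ being non-increasing --- which gives $(q_j)_+ \leq |q_m|$ for $1 \leq j \leq m - 1$.

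The step we expect to be the main obstacle is the first one: establishing the uniform quasi-differentiability of $\csg$ on $\ms{A}$ together with the relatively compact (trace-class) structure of $A + f'(S(\tau) g_0)$. Because $f$ is only locally Lipschitz and quadratic, this requires first knowing that $\ms{A}$ is bounded in a sufficiently strong norm --- precisely the higher regularity of $\ms{A}$ obtained in Section 6 --- so that $f'(S(\tau) g_0)$ acts as a lower-order perturbation of $A$, the trace integrand in \eqref{trq} is well defined and uniformly bounded, and the spectral bounds for $A$ can be combined with upper bounds on $f'$ in the subsequent estimates to exhibit a concrete integer $m$ with $q_m < 0$.
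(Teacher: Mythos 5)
The paper itself offers no proof of Proposition \ref{P:HFd}: it is quoted as a known result from \cite[Chapter 5]{rT88}, so there is no internal argument to compare yours against. Your outline is the standard Constantin--Foias--Temam trace and volume-contraction scheme, which is indeed the right source. The first two steps (uniform quasi-differentiability on $\ms{A}$, the Liouville formula for $m$-volume elements, and the resulting uniform exponential contraction of $m$-volumes when $q_m<0$, giving $d_H(\ms{A})\leq m$ and the general bound $d_F(\ms{A})\leq m\max_{1\leq j\leq m-1}(1+(q_j)_+/|q_m|)$) are correct in outline; note only that the verification of quasi-differentiability for the Oregonator semiflow belongs to the proof of Theorem \ref{T:fnd}, where the variational system \eqref{vareq} is introduced, not to this abstract proposition.

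The genuine gap is in your justification of the final inequality $d_F(\ms{A})\leq 2m$. You assert that concavity of $j\mapsto q_j$ yields $(q_j)_+\leq |q_m|$ for $1\leq j\leq m-1$. Even granting concavity (which itself requires an argument for the time-averaged, supremized quantities in \eqref{trq}), this implication is false: take $q_0=0$, $q_1=10$, $q_2=-1$. The increments $10,-11$ are non-increasing, so the sequence is concave, and $q_2<0$; yet $(q_1)_+=10>1=|q_2|$, so the general formula gives $d_F\leq 2(1+10)=22$, not $4$. In Temam's treatment the factor-two bound is not a consequence of $q_m<0$ alone; it requires an additional structural hypothesis, for instance that the $q_j$ admit a concave majorant $\chi$ vanishing at some $s^*$ and that $m$ be taken comparable to $2s^*$, or, as in applications like Theorem \ref{T:fnd}, that $m$ be chosen large enough relative to the root of the explicit majorant $-c_1 j^{1+2/n}+c_2 j$ so that $\max_{j<m}(q_j)_+\leq |q_m|$ can be checked directly. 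As stated, with $m$ merely some integer making $q_m<0$, the terminal inequality $\leq 2m$ in \eqref{hfd} is not valid in general; this is a defect of the proposition as quoted as much as of your argument, but your appeal to concavity does not repair it.
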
 
It is standard to show that for any given $t > 0$, $S(t)$ on $H_+$ is Fr\'{e}chet differentiable and and its Fr\'{e}chet derivative at $g_0$ is given by
$$
	L(S(t), g_0)Z_0 \overset{\textup{def}}{=} Z(t) = (\mU(t), \mV(t), \mW(t)), 
$$
for any $Z_0 = (\mU_0, \mV_0, \mW_0) \in H$, where $(\mU(t), \mV(t), \mW(t))$ is the weak solution of the following initial-boundary value problem of the variational system associated with the trajectory $\{S(t) g_0: t \geq 0\}$,
\begin{equation} \label{vareq}
	\begin{split}
	\frac{\partial \mU}{\partial t} & = d_1 \gd \mU + a_1 \mU + b_1 \mV -2F u(t) \mU - G_1v(t) \mU - G_1 u(t) \mV,  \\
	\frac{\partial \mV}{\partial t} & = d_2 \gd \mV - b_2 \mV + c_2 \mW - G_2 v(t) \mU - G_2 u(t) \mV, \\
	\frac{\partial \mW}{\partial t} & = d_1 \gd \mW + a_3 \mU - c_3 \mW,  \quad t > 0, \; x \in \gw, \\
	 & \mU\mid_{\partial \gw} =  \mV\mid_{\partial \gw} = \mW\mid_{\partial \gw} = 0, \quad t > 0, \\[2pt]
	 & \mU(0) = \mU_0, \quad \mV(0) = \mV_0, \quad \mW(0) = \mW_0. 
	\end{split}
\end{equation}
Here $(u(t), v(t), w(t)) = g(t) = S(t)g_0$ is the weak solution of \eqref{eveq} satisfying the initial condition $g(0) = g_0$.  The initial-boundary value problem \eqref{vareq} can be written as 
\begin{equation} \label{vareveq}
	\begin{split}
	\frac{dZ}{dt} = (A + & f^{\prime} (S(t)g_0)) Z,   \quad t > 0, \\[2pt]
	&Z(0) = Z_0.
	\end{split}
\end{equation}
\begin{theorem} \label{T:fnd}
The global attractors $\ms{A}$ for the Oregonator semiflow $\csg$ has a finite Hausdorff dimesion and a finite fractal dimension.
\end{theorem}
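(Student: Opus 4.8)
The plan is to apply Proposition \ref{P:HFd}: it suffices to produce an integer $m$ with $q_m = \limsup_{t\to\infty} q_m(t) < 0$. The starting point is the trace identity. For fixed $g_0 \in \ms{A}$, write $S(\tau)g_0 = (u(\tau), v(\tau), w(\tau))$ and let $\{\phi_j(\tau)\}_{j=1}^m$, $\phi_j = (\phi_j^{(1)}, \phi_j^{(2)}, \phi_j^{(3)})$, be an orthonormal basis in $H$ of the range of $Q_m(\tau)$, so that
$$
	\textup{Tr}\,\big(A + f^\prime(S(\tau)g_0)\big)\circ Q_m(\tau) = \sum_{j=1}^m \inpt{(A + f^\prime(S(\tau)g_0))\phi_j, \phi_j}.
$$
The contribution of $A$ is $-\sum_{j=1}^m (d_1\|\nb\phi_j^{(1)}\|^2 + d_2\|\nb\phi_j^{(2)}\|^2 + d_3\|\nb\phi_j^{(3)}\|^2) \le -d_0 \sum_{j=1}^m\|\nb\phi_j\|^2$, with $d_0$ as in \eqref{dm}. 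The first main step is the lower bound $\sum_{j=1}^m\|\nb\phi_j\|^2 \ge \gk_n\,|\gw|^{-2/n}\, m^{1 + 2/n}$ for a constant $\gk_n > 0$ depending only on $n \le 3$ and $\gw$; this follows from the generalized (collective) Sobolev--Lieb--Thirring inequality, cf.\ \cite{rT88}, applied to the $L^2$-orthonormal family $\{\phi_j\}$, together with Jensen's inequality applied to $\rho = \sum_{j=1}^m |\phi_j|^2$, which satisfies $\int_\gw \rho\,dx = m$. (Equivalently one may use the discrete min--max principle and Weyl's asymptotics for the Dirichlet Laplacian on $[L^2(\gw)]^3$.)

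Second, I would estimate the $f^\prime$ contribution. The Fr\'echet derivative $f^\prime(g)$ is the $3\times 3$ matrix with rows $(a_1 - 2Fu - G_1 v,\; b_1 - G_1 u,\; 0)$, $(-G_2 v,\; -b_2 - G_2 u,\; c_2)$, $(a_3,\; 0,\; -c_3)$, so $\inpt{f^\prime(g)\phi_j, \phi_j}$ is an integral over $\gw$ in which the genuinely autocatalytic diagonal terms $-2Fu|\phi_j^{(1)}|^2$, $-G_1 v|\phi_j^{(1)}|^2$, $-G_2 u|\phi_j^{(2)}|^2$, together with $-b_2|\phi_j^{(2)}|^2$ and $-c_3|\phi_j^{(3)}|^2$, are all $\le 0$ because $u, v \ge 0$ on the invariant cone $H_+$; only $a_1|\phi_j^{(1)}|^2$ and the off-diagonal couplings remain. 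The constant-coefficient couplings (involving $b_1, c_2, a_3$) contribute at most $C\|\phi_j\|^2 = C$ after Young's inequality, while the variable ones, of the form $\int_\gw (G_1 u + G_2 v)|\phi_j^{(1)}||\phi_j^{(2)}|\,dx$, are bounded by the generalized H\"older inequality by $(G_1\|u\|_{L^3} + G_2\|v\|_{L^3})\|\phi_j^{(1)}\|_{L^3}\|\phi_j^{(2)}\|_{L^3}$ and then, using the Gagliardo--Nirenberg inequality $\|\phi\|_{L^3} \le C\|\phi\|^{\gz}\|\nb\phi\|^{1-\gz}$ with $\gz = 1 - n/6 \ge 1/2$ for $n \le 3$, and $\|\phi_j\| = 1$, by $C(\|u\|_{L^3} + \|v\|_{L^3})(\|\nb\phi_j^{(1)}\| + \|\nb\phi_j^{(2)}\|)^{2(1-\gz)}$. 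Summing over $j$ and using the Cauchy--Schwarz (or H\"older) inequality on the sum, together with one Young inequality, this whole contribution is $\le \tfrac{d_0}{2}\sum_{j=1}^m\|\nb\phi_j\|^2 + C_1 m$, where $C_1$ depends on the parameters, $n$, $\gw$, and on $\sup_{g_0\in\ms{A},\,\tau\ge 0}(\|u(\tau)\|_{L^3} + \|v(\tau)\|_{L^3})$. This last supremum is finite: by the proof of Lemma \ref{L:ac} the attractor $\ms{A}$ is bounded in $E = [H_0^1(\gw)]^3 \hookrightarrow [L^6(\gw)]^3 \hookrightarrow [L^3(\gw)]^3$ (alternatively this is immediate from Lemma \ref{L:pab}), and $\ms{A}$ is invariant.

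Combining the two steps gives, uniformly in $\tau \ge 0$ and $g_0 \in \ms{A}$,
$$
	\textup{Tr}\,\big(A + f^\prime(S(\tau)g_0)\big)\circ Q_m(\tau) \le -\tfrac{d_0}{2}\,\gk_n\,|\gw|^{-2/n}\,m^{1 + 2/n} + C_1 m.
$$
Averaging over $[0, t]$ and taking the suprema over $g_0 \in \ms{A}$ and over unit vectors $g_1, \dots, g_m$ in $H$ preserves this bound, so $q_m(t) \le -\tfrac{d_0}{2}\gk_n|\gw|^{-2/n} m^{1 + 2/n} + C_1 m$ for every $t > 0$, hence the same bound holds for $q_m$. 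Since $n \le 3$ forces $1 + 2/n \ge 5/3 > 1$, the right-hand side is negative once $m$ exceeds some $m^\ast$; for such $m$, Proposition \ref{P:HFd} yields $d_H(\ms{A}) \le m^\ast + 1 < \infty$ and $d_F(\ms{A}) \le 2(m^\ast + 1) < \infty$, which proves the theorem. \emph{The main obstacle is the uniformity of the $f^\prime$-estimate over the whole attractor together with the correct deployment of the collective Sobolev inequality}: the sign-definiteness of the quadratic autocatalytic terms on $H_+$ is exactly what renders the variable-coefficient couplings absorbable by the diffusion term $d_0\sum\|\nb\phi_j\|^2$, after which only the routine point remains, namely beating the linear term $C_1 m$ by the superlinear $m^{1+2/n}$.
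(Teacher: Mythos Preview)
Your proposal is correct and follows essentially the same architecture as the paper's proof: estimate the trace via an $H$-orthonormal basis of $Q_m(\tau)H$, drop the sign-definite diagonal terms coming from the autocatalytic kinetics on $H_+$, control the remaining variable-coefficient off-diagonal couplings by H\"older plus Gagliardo--Nirenberg plus Young so they are absorbed by $\tfrac{d_0}{2}\sum_j\|\nabla\phi_j\|^2$, and then invoke the generalized Sobolev--Lieb--Thirring inequality to produce the superlinear term $m^{1+2/n}$ dominating the linear $C_1 m$. The only notable technical difference is that the paper splits the cross terms with $L^2\times L^4\times L^4$ H\"older (so only the $L^2$-bound $\|u\|,\|v\|\le\sqrt{K_1}$ on $\ms{A}$ is needed) and obtains $\sum_j\|\nabla\phi_j\|^{n/2}$ before Young's inequality, whereas you use $L^3\times L^3\times L^3$ and the $L^3$-bound on $u,v$ from Lemma~\ref{L:pab} or the $E$-bound from Lemma~\ref{L:ac}; both choices lead to the same conclusion.
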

\begin{proof}
By Proposition \ref{P:HFd}, we shall estimate $\textup{Tr} \, (A + f^{\prime} (S(\tau)g_0 )) \circ Q_{m}(\tau)$. At any given time $\tau > 0$, let $\{\varphi_{j} (\tau): j = 1, \cdots , m\}$ be an $H$-orthonormal basis for the subspace $Q_m(\tau) H = \textup{Span}\,  \{Z_1(\tau), \cdots , Z_,(\tau) \}$, where $Z_1 (t), \cdots , Z_{m} (t)$ are the weak solutions of \eqref{vareveq} with the respective initial data $Z_{1,0}, \cdots , Z_{m,0}$ and, without loss of generality, assuming that $Z_{1,0}, \cdots , Z_{m,0}$ are linearly independent in $H$. 

Note that $Z_1 (t), \cdots , Z_{m} (t)$ turn out to be strong solutions for $t > 0$. By Gram-Schmidt orthogonalization,  $\varphi_{j}(\tau) = (\varphi_{j}^1 (\tau), \varphi_{j}^2 (\tau),\varphi_{j}^3 (\tau)) \in E$ for $\tau > 0$, $j = 1, \cdots , m$,  and $\varphi_{j} (\tau)$ are strongly measurable in $\tau$. Recall that $d_0 = \min \{d_1, d_2, d_3 \}$. Then 
\begin{equation} \label{Trace}
		\begin{split}
	\textup{Tr} \, (A + f^{\prime} (S(\tau)g_0 )\circ Q_m(\tau)& = \sum_{j=1}^{m} \left( \langle A \varphi_{j}(\tau), \varphi_{j}(\tau) \rangle + \langle  f^{\prime} (S(\tau)g_0 ) \varphi_{j}(\tau), \varphi_{j}(\tau) \rangle\right)  \\
	& \leq - d_0 \sum_{j=1}^{m} \, \| \nabla \varphi_{j}(\tau) \|^2 + J_1 + J_2 + J_3,  \quad \tau > 0,
		\end{split}
\end{equation}
where the three terms $J_1, J_2, J_3$ are given by

\begin{align*}
	J_1 &= - \sum_{j=1}^{m}  \int_{\gw} (2F u(\tau) + G_1 v(\tau)) |\varphi_{j}^1 (\tau) |^2\, dx - \sum_{j=1}^{m} \, \int_{\gw} G_1 u(t)  \varphi_{j}^1 (\tau)  \varphi_{j}^2 (\tau) \,dx \\
	&\leq - \sum_{j=1}^{m} \, \int_{\gw} G_1 u(t)  \varphi_{j}^1 (\tau)  \varphi_{j}^2 (\tau) \,dx, \\
	J_2 &= - \sum_{j=1}^{m} \int_{\gw}  G_2 v(\tau) \varphi_{j}^1 (\tau) \varphi_{j}^2 (\tau) \, dx - \sum_{j=1}^{m} \int_{\gw} G_2 u(\tau) |\varphi_{j}^2 (\tau)|^2 \, dx \\
	&\leq  - \sum_{j=1}^{m} \int_{\gw}  G_2 v(\tau) \varphi_{j}^1 (\tau) \varphi_{j}^2 (\tau) \, dx,
\end{align*}
and
\begin{align*}
	J_3  =&\,  \sum_{j=1}^{m} \int_{\gw} \left( a_1 |\varphi_{j}^1 (\tau) |^2 + b_1 \varphi_{j}^1 (\tau) \varphi_{j}^2 (\tau) - b_2 |\varphi_{j}^2 (\tau)|^2 \right) dx \\
	&{} + \sum_{j=1}^{m} \int_{\gw} \left( c_2 \varphi_j^2 (\tau) \varphi_j^3 (\tau) + a_3 \varphi_j^1 (\tau) \varphi_j^3 (\tau) - c_3 |\varphi_j^3 (\tau)|^2 \right) dx \\
	 \leq &\, \sum_{j=1}^{m} \int_{\gw} \left(a_1 |\varphi_{j}^1 (\tau) |^2 + b_1 \varphi_{j}^1 (\tau) \varphi_{j}^2 (\tau) +  c_2 \varphi_j^2 (\tau) \varphi_j^3 (\tau) + a_3 \varphi_j^1 (\tau) \varphi_j^3 (\tau)\right) dx.
\end{align*}
By the generalized H\"{o}lder inequality, we get
\begin{equation} \label{J1eq}
		\begin{split}
	J_1 &\leq G_1 \sum_{j=1}^{m} \| u(\tau) \| \| \varphi_{j}^1 (\tau) \|_{L^4} \| \varphi_{j}^2 (\tau) \|_{L^4} \\
	&\leq G_1 \sum_{j=1}^{m} \|S(\tau) g_0 \| \| \varphi_{j}^1 (\tau) \|_{L^4} \| \varphi_{j}^2 (\tau) \|_{L^4} \leq G_1 \sqrt{K_1} \sum_{j=1}^{m} \| \varphi_j (\tau) \|_{L^4}^2, 
		\end{split}
\end{equation}
for any $\tau > 0$ and any $g_0 \in \ms{A}$. Now we apply the Garliardo-Nirenberg interpolation inequality, cf. \cite[Theorem B.3]{SY02},
\begin{equation} \label{GNineq}
	\| \varphi \|_{W^{k,p}} \leq C \| \varphi \|_{W^{m,q}}^{\theta} \| \varphi \|_{L^{r}}^{1 - \theta}, \quad \textup{for} \; \varphi \in W^{m,q}(\gw),
\end{equation}
provided that $p, q, r \geq 1, 0 < \theta < 1$, and
$$
	k - \frac{n}{p} \leq \theta \left( m - \frac{n}{q} \right)  - (1 - \theta ) \frac{n}{r} ,   \quad \textup{where} \; \, n = \textup{dim} \, \gw.
$$
Here let $W^{k, p}(\gw) = L^4(\gw), W^{m, q}(\gw) = H_{0}^{1}(\gw), L^{r}(\gw) = L^2(\gw)$, and $\theta = n/4 \leq 3/4$. It follows from \eqref{GNineq} that
\begin{equation} \label{inter}
	\| \varphi_{j} (\tau) \|_{L^4} \leq C \| \nabla \varphi_{j} (\tau) \|^{\frac{n}{4}} \| \varphi_{j} (\tau) \|^{1 - \frac{n}{4}} = C \| \nabla \varphi_{j} (\tau) \|^{\frac{n}{4}}, \quad j = 1, \cdots , m,
\end{equation}
since $\| \varphi_{j}(\tau) \| = 1$, where $C$ is a uniform constant. Substitute \eqref{inter} into \eqref{J1eq} to obtain
\begin{equation*} 
	J_1 \leq G_1 \sqrt{K_1} C^2 \sum_{j=1}^{m} \, \| \nabla \varphi_{j} (\tau) \|^{\frac{n}{2}}.
\end{equation*}
Similarly we can get
\begin{equation*} 
	J_2 \leq G_2 \sqrt{K_1} \sum_{j=1}^{m} \| \varphi_{j} (\tau) \|_{L^4}^2 \leq G_2 \sqrt{K_1} C^2 \sum_{j=1}^{m} \, \| \nabla \varphi_{j} (\tau) \|^{\frac{n}{2}}.
\end{equation*}
Moreover, we have 
\begin{equation*} 
	J_3 \leq \sum_{j=1}^{m} (a_1 + b_1 + c_2 + a_3) \| \varphi_{j} (\tau) \|^2 = m (a_1 + b_1 + c_2 + a_3).
\end{equation*}
Substituting the above three inequalities into \eqref{Trace}, we obtain
\begin{equation} \label{Trest}
	\begin{split}
	&\textup{Tr} \, (A + f^{\prime} (S(\tau)g_0 )\circ Q_m(\tau) \\
	\leq &\, - d_0 \sum_{j=1}^{m} \| \nabla \varphi_{j}(\tau) \|^2 +  (G_1 + G_2) \sqrt{K_1} C^2 \sum_{j=1}^{m} \| \nabla \varphi_{j}(\tau) \|^{\frac{n}{2}} + m (a_1 + b_1 + c_2 + a_3).
	\end{split}
\end{equation}
By Young's inequality, for $n \leq 3$, we have
$$
	(G_1 + G_2) \sqrt{K_1} C^2 \sum_{j=1}^{m} \| \nabla \varphi_{j}(\tau) \|^{\frac{n}{2}}  \leq \frac{d_0}{2} \sum_{j=1}^{m} \|\nabla \varphi_{j}(\tau) \|^2 + K(n) m,
$$
where $K(n)$ is a positive constant depending only on $n = $ dim $\gw$ and the involved constants $d_0, C, K_1, G_1$ and $G_2$. Hence, for any $\tau > 0$ and any $g_0 \in \ms{A}$, it holds that
\begin{equation*} 
	\textup{Tr} \, (A + f^{\prime} (S(\tau)g_0 )\circ Q_m(\tau) \leq - \frac{d_0}{2} \sum_{j=1}^{m} \| \nabla \varphi_{j}(\tau) \|^2 + m\left(K(n) +  a_1 + b_1 + c_2 + a_3\right). 
\end{equation*}
According to the generalized Sobolev-Lieb-Thirring inequality \cite[Appendix, Corollary 4.1]{rT88}, since $\{ \varphi_1 (\tau), \cdots , \varphi_{m} (\tau) \}$ is an orthonormal set in $H$, there exists a constant $\Psi > 0$ only depending on the shape and dimension of $\gw$ such that
\begin{equation} \label{SLTineq}
	\sum_{j=1}^{m} \| \nabla \varphi_{j}(\tau) \|^2  \geq  \frac{\Psi \, m^{1 + \frac{2}{n}}}{|\gw |^{\frac{2}{n}}}.
\end{equation}
Therefore, for any $\tau > 0$ and any $g_0 \in \ms{A}$,
\begin{equation} \label{finest}
	\textup{Tr} \, (A + f^{\prime} (S(\tau)g_0 )\circ Q_m(\tau) \leq - \frac{d_0  \Psi}{2 |\gw |^{\frac{2}{n}}} m^{1 + \frac{2}{n}} + m\left(K(n) + a_1 + b_1 + c_2 + a_3 \right).
\end{equation}
Then we conclude that
\begin{equation} \label{qmt}
		\begin{split}
	q_{m}(t) & =  \sup_{g_0 \in \ms{A}} \;  \, \sup_{\substack{g_{i} \in H, \|g_{i} \| = 1\\ i = 1, \cdots, m}} \; \, \left( \frac{1}{t} \int_{0}^{t} \textup{Tr}  \left( A + f^{\prime} (S(\tau) g_0 ) \right) \circ Q_{m} (\tau) \, d\tau \right)  \\
	& \leq - \, \frac{d_0 \Psi}{2 |\gw |^{\frac{2}{n}}} m^{1 + \frac{2}{n}}  + m\left(K(n) + a_1 + b_1 + c_2 + a_3\right), \quad \textup{for any} \, \; t > 0, 
		\end{split}
\end{equation}
so that 
\begin{equation}  \label{qm}
	q_m = \limsup_{t \to \infty} \, q_m (t) \leq  - \, \frac{d_0 \Psi}{2 |\gw |^{\frac{2}{n}}} m^{1 + \frac{2}{n}} + m\left(K(n) + a_1 + b_1 + c_2 + a_3\right)  < 0,
\end{equation}
if the integer $m$ satisfies the following condition,
\begin{equation} \label{dimc}
	m - 1 \leq \left( \frac{2(K(n) + a_1 + b_1 + c_2 + a_3)}{d_0 \Psi} \right)^{n/2} | \gw | < m.
\end{equation}
According to Proposition \ref{P:HFd}, we have shown that the Hausdorff dimension and the fractal dimension of the global attractor $\ms{A}$ are finite with the upper bounds given by
$$
	d_{H} (\ms{A}) \leq m \quad \textup{and} \quad d_{F} (\ms{A}) \leq 2m,
$$
respectively, where the integer $m$ satisfies \eqref{dimc}. 
\end{proof}

\section{\textbf{$(H_+, E_+)$ Global Attractor and $L^\infty$ Regularity}}

In this section we show that the global attractor $\ms{A}$ of the Oregonator semiflow is an $(H_+, E_+)$ global attractor with the regularity $\ms{A} \subset [L^\infty (\gw)]^3$. The following concept was introduced in \cite{BV83}.

\begin{definition} \label{D:hea}
	Let $\mX$ be a Banach space or a closed invariant cone in a Banach space and $\{\Sigma (t)\}_{t\geq 0}$ be a semiflow on $\mX$. Let $\mY$ be a compactly imbedded subspace or sub-cone of $\mX$. A subset $\mathcal{A}$ of $\mY$ is called an $(\mX, \mY)$ global attractor for this semiflow if $\mathcal{A}$ has the following properties,
	
	\textup{(i)} $\mathcal{A}$ is a nonempty, compact, and invariant set in $\mY$.
	
	\textup{(ii)} $\mathcal{A}$ attracts any bounded set $B \subset \mX$ with respect to the $Y$-norm, namely, there is a time $\tau = \tau (B)$ such that $\Sigma (t)B \subset \mY$ for $t > \tau$ and $dist_\mY (\Sigma (t)B, \mathcal{A}) \to 0$, as $t \to \infty$.
\end{definition}

\begin{lemma} \label{L:ste}
	Let $\{g_m\}$ be a sequence in $E$ such that $\{g_m\}$ converges to $g_0 \in E$ weakly in $E$ and $\{g_m\}$ converges to $g_0$ strongly in $H$, as $m \to \infty$. Then
	$$
		\lim_{m \to \infty} S(t) g_m = S(t) g_0 \; \; \textup{strongly in} \; E,
	$$
where the convergence is uniform with respect to $t$ in any given compact interval $[t_0, t_1] \subset (0, \infty)$.
\end{lemma}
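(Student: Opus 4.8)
The plan is to pass to the difference $\theta_m(t) = S(t)g_m - S(t)g_0$ and to run two successive energy estimates, first in $H$ and then in $E$, bridged by an integral-averaging (uniform Gronwall) step that crosses the initial-time layer where the data converge to $g_0$ only weakly in $E$. First I would collect the uniform bounds. Since $\{g_m\}$ converges weakly in $E$ it is bounded in $E$, hence in $H$; then Lemma \ref{L:glsn}, Lemma \ref{L:pab} and the instantaneous parabolic smoothing (cf. \cite[Theorems 47.6 and 48.5]{SY02}, together with a uniform-Gronwall argument as in the proof of Lemma \ref{L:ac} applied on compact time subintervals) show that $S(\cdot)g_m$ and $S(\cdot)g_0$ are bounded in $H$ on $[0,\infty)$ and bounded in $D(A) \subset [L^\infty(\gw)]^3$ (recall $n \le 3$) on every compact subinterval $[\sigma, t_1] \subset (0,\infty)$, with all bounds uniform in $m$ since they depend only on the $H$- and $E$-norms of the initial data. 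Moreover, the continuous dependence on initial data established in Lemma \ref{L:locs}, which rests only on the uniform $L^\infty_t L^2_x$ bound, together with $g_m \to g_0$ strongly in $H$, yields $\sup_{t \in [0, t_1]} \|\theta_m(t)\| \to 0$ as $m \to \infty$.

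Next I would derive the first energy estimate. Writing the difference equation $\partial_t \theta_m = A\theta_m + \big(f(S(t)g_m) - f(S(t)g_0)\big)$ and pairing with $\theta_m$ in $H$ (legitimate in $W(0,t_1)$ by Proposition \ref{P1}(c)), the quadratic terms of $f$ factor as (difference)$\times$(sum) --- e.g. $u_m^2 - u^2 = (u_m - u)(u_m + u)$ and $u_m v_m - uv = (u_m - u)v_m + u(v_m - v)$ --- so that with $\|\varphi \psi\| \le \|\varphi\|_{L^4}\|\psi\|_{L^4}$, the embedding $\|\cdot\|_{L^4} \le \eta\|\nb\cdot\|$, and the uniform $L^4$-bounds of the trajectories on $[\sigma, t_1]$, one reaches an inequality $\frac{d}{dt}\|\theta_m\|^2 + \tfrac{d_0}{2}\|\nb\theta_m\|^2 \le C\|\theta_m\|^2$ on $[\sigma, t_1]$. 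Integrating over $[\sigma, t_1]$ and using the uniform $H$-convergence just noted (so that $\|\theta_m(\sigma)\|^2 \to 0$ and $\int_\sigma^{t_1}\|\theta_m\|^2 \to 0$), this gives $\int_\sigma^{t_1}\|\nb\theta_m(s)\|^2\,ds \to 0$ as $m \to \infty$.

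Then I would derive the second energy estimate, legitimate on $(0, t_1]$ since the solutions are strong there: pairing the difference equation with $-\gd\theta_m$ and using the uniform $[L^\infty(\gw)]^3$-bounds on $[\sigma, t_1]$, the nonlinear difference obeys $\|f(S(t)g_m) - f(S(t)g_0)\| \le C\|\theta_m(t)\|$, so that after absorbing $\tfrac{d_0}{2}\|\gd\theta_m\|^2$ one obtains $\frac{d}{dt}\|\nb\theta_m\|^2 \le h_m(t)$ on $[\sigma, t_1]$, where $h_m(t) = \tfrac{C}{d_0}\|\theta_m(t)\|^2$ and $\int_\sigma^{t_1} h_m \to 0$. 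Given the compact interval $[t_0, t_1] \subset (0,\infty)$ of the statement, choose $\sigma \in (0, t_0)$. Since $\int_\sigma^{t_0}\|\nb\theta_m\|^2\,ds \to 0$ by the previous step, the mean value theorem furnishes $s_m \in [\sigma, t_0]$ with $\|\nb\theta_m(s_m)\|^2 \le \tfrac{1}{t_0 - \sigma}\int_\sigma^{t_0}\|\nb\theta_m\|^2\,ds \to 0$; integrating the last differential inequality from $s_m$ up to any $t \in [t_0, t_1]$ then gives $\|\nb\theta_m(t)\|^2 \le \|\nb\theta_m(s_m)\|^2 + \int_\sigma^{t_1} h_m(s)\,ds \to 0$, uniformly for $t \in [t_0, t_1]$, which is exactly the assertion $S(t)g_m \to S(t)g_0$ in $E$ uniformly on $[t_0, t_1]$.

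The main obstacle is the initial-time layer: at $t = 0$ the data converge only weakly in $E$, so $\|\nb\theta_m(0)\|$ need not tend to $0$ and one cannot apply Gronwall directly to the $E$-estimate starting from $t = 0$. The integral-averaging step is precisely the device that circumvents this, trading the $L^1_t$-smallness of $\|\nb\theta_m\|^2$ --- gained from the first, $H$-level energy estimate --- for a good starting time $s_m > 0$ at which $\|\nb\theta_m(s_m)\|$ is already small. A secondary, purely technical point is securing the uniform-in-$m$ $[L^\infty(\gw)]^3$-bounds (equivalently, the $[L^8(\gw)]^3$-bounds already used in Lemma \ref{L:pab}, which would also close the estimates) of the trajectories on $[\sigma, t_1]$; these rest on the smoothing of the analytic semigroup $\{e^{At}\}_{t\ge0}$ together with the absorbing estimates of Lemmas \ref{L:glsn} and \ref{L:pab}, whose constants depend only on the norms of the initial data.
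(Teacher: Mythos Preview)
The paper does not give its own proof of this lemma; it simply defers to \cite[Lemma 4.2]{yY10}. So there is no in-paper argument to compare against, and your proposal has to be judged on its own.

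Your two-level energy scheme (an $H$-estimate to force $\int \|\nabla\theta_m\|^2\,dt \to 0$, then an $E$-estimate combined with the mean-value selection of a good starting time $s_m$) is a sound and standard route to this kind of ``smoothing-plus-continuous-dependence'' statement, and it closes correctly on any $[t_0,t_1]\subset(0,\infty)$. Two small points are worth tightening. First, the $H$-level continuous dependence on $[0,t_1]$ does \emph{not} rest ``only on the uniform $L^\infty_t L^2_x$ bound'': estimating $\langle f(S(t)g_m)-f(S(t)g_0),\theta_m\rangle$ for the quadratic terms requires uniform $L^4$ control of the individual trajectories (e.g.\ via \eqref{6bd}, which is uniform in $m$ because $\{g_m\}$ is bounded in $E\hookrightarrow L^6$), not merely $L^2$ bounds. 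Second, the uniform $[L^\infty(\gw)]^3$ bounds on $[\sigma,t_1]$ are obtainable but are more than you need; the same second energy estimate closes with only the uniform $L^4$ bounds, yielding $\frac{d}{dt}\|\nabla\theta_m\|^2 \le C\|\nabla\theta_m\|^2$, after which Gronwall from $s_m$ and $\|\nabla\theta_m(s_m)\|\to 0$ already give the uniform $E$-convergence. Either variant is fine; just make the dependence of the constants explicit so the uniformity in $m$ is transparent.
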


The proof of this lemma is seen in \cite[Lemma 4.2]{yY10}.

\begin{theorem} \label{T:HEatr}
	The global attractor $\ms{A}$ in $H_+$ for the Oregonator semiflow $\csg$ is indeed an $(H_+, E_+)$ global attractor.
\end{theorem}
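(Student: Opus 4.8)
The plan is to upgrade the $H_+$-global attractor $\ms{A}$ from Theorem \ref{T:main} to an $(H_+, E_+)$-global attractor by verifying the two requirements of Definition \ref{D:hea}. The attraction property in the $E_+$-norm has in effect already been obtained: the uniform Gronwall estimate \eqref{ascc} in the proof of Lemma \ref{L:ac} shows that for any bounded $B \subset H_+$ there is a finite time $T(B)$ after which $S(t)B$ lies in a fixed bounded ball $\mB_E$ of $E_+$. Combined with the asymptotic compactness already established, $\mB_E$ is absorbing in $H_+$ and $\ms{A} = \omega(\mB_E)$. So the only real work is to show that $\ms{A}$ is compact \emph{in the topology of} $E_+$ and that it attracts bounded sets in the $E$-norm, not merely in the $H$-norm.

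The key tool is Lemma \ref{L:ste}: if $g_m \rightharpoonup g_0$ weakly in $E$ and $g_m \to g_0$ strongly in $H$, then $S(t)g_m \to S(t)g_0$ strongly in $E$, uniformly for $t$ in compact subintervals of $(0,\infty)$. First I would prove $E$-compactness of $\ms{A}$. Let $\{x_n\} \subset \ms{A}$. Since $\ms{A}$ is bounded in $E$ (it sits inside $\mB_E$) and $E$ is a Hilbert space, pass to a subsequence with $x_n \rightharpoonup y$ weakly in $E$; since $E \hookrightarrow H$ is compact, $x_n \to y$ strongly in $H$, and since $\ms{A}$ is compact in $H$ we have $y \in \ms{A}$. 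Now use invariance: $\ms{A} = S(1)\ms{A}$, so each $x_n = S(1)z_n$ for some $z_n \in \ms{A}$; applying the same weak/strong extraction to $\{z_n\}$ and then Lemma \ref{L:ste} with $t = 1$ gives $x_n = S(1)z_n \to S(1)z_0$ strongly in $E$ along a further subsequence, with $S(1)z_0 \in \ms{A}$. Hence every sequence in $\ms{A}$ has an $E$-convergent subsequence with limit in $\ms{A}$, i.e. $\ms{A}$ is compact in $E_+$. Invariance and nonemptiness are inherited from Theorem \ref{T:main}, giving property (i).

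For property (ii), the $E$-norm attraction, I would argue by contradiction in the standard way. Suppose $dist_E(S(t)B, \ms{A}) \not\to 0$ for some bounded $B \subset H_+$; then there exist $\varepsilon > 0$, $t_n \to \infty$, and $b_n \in B$ with $dist_E(S(t_n)b_n, \ms{A}) \geq \varepsilon$. Write $S(t_n)b_n = S(1)\,S(t_n - 1)b_n$. By the absorbing property \eqref{absb} and estimate \eqref{ascc}, for $n$ large the points $y_n := S(t_n-1)b_n$ lie in the bounded set $\mB_E \subset E_+$; extracting a subsequence, $y_n \rightharpoonup y_\ast$ weakly in $E$ and $y_n \to y_\ast$ strongly in $H$. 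The $H$-attraction of $\ms{A}$ together with $t_n - 1 \to \infty$ forces $y_\ast \in \ms{A}$ (the $H$-distance of $y_n$ to $\ms{A}$ tends to $0$, and $\ms{A}$ is $H$-closed). Applying Lemma \ref{L:ste} with $t = 1$ yields $S(t_n)b_n = S(1)y_n \to S(1)y_\ast$ strongly in $E$, and $S(1)y_\ast \in S(1)\ms{A} = \ms{A}$ by invariance. This contradicts $dist_E(S(t_n)b_n, \ms{A}) \geq \varepsilon$, and completes the proof.

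The main obstacle — and the only nontrivial ingredient — is the continuity statement in Lemma \ref{L:ste}: passing from weak-$E$ plus strong-$H$ convergence of initial data to strong-$E$ convergence of the solutions at positive times. This is precisely the smoothing/asymptotic-compactness-in-$E$ mechanism, and it is the step that genuinely uses the parabolic structure and the $L^4$ and $L^6$ absorbing bounds from Lemma \ref{L:pab}; but since the excerpt already cites \cite[Lemma 4.2]{yY10} for it, in this proof it may simply be invoked. Everything else is soft functional analysis: compact embedding $E \hookrightarrow H$, closedness and invariance of $\ms{A}$, and the already-proved uniform $E$-bound \eqref{ascc} which supplies the bounded absorbing set in $E_+$ needed to run both the compactness and the attraction arguments.
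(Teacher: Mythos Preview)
Your proof is correct and uses the same key ingredients as the paper: the $E$-bounded absorbing ball coming from \eqref{ascc} and the smoothing Lemma \ref{L:ste}. The paper organizes the argument slightly differently---it first applies Proposition \ref{P:exga} in $E_+$ to obtain a global attractor $\ms{A}_E$ there (the $E$-asymptotic compactness being proved by exactly your weak-$E$/strong-$H$ extraction followed by Lemma \ref{L:ste} at a fixed positive time), and then shows $\ms{A}_E = \ms{A}$ by mutual attraction---but the substance is identical to your direct verification of Definition \ref{D:hea}.
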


\begin{proof}
By the proof of Lemma \ref{L:ac} and \eqref{ascc}, we find that
\beq  \bl{absbE}
	B_1 = \{\varphi \in E_+ : \|\varphi \|_E = \|\nb \varphi \|^2 \leq K_E \},
\eeq
where 
\begin{equation}  \bl{ke}
	K_E = \frac{1}{\min \{1, M_2\}} \left(M_5 +  K_1 \left(\frac{2}{d_1} (a_1^2 + b_1^2) + \frac{a_3^2 \, c_2^2}{2 d_3\, b_2\, c_3}\right)\right)\exp \left(\eta^4 M_5\left(\frac{G_1^2}{2d_1} + \frac{G_2^2}{2d_2}\right) \right), 
\end{equation}
is an absorbing set for the Oregonator semiflow $\csg$ in $E_+$. Indeed, for any $E$-bounded subset $B \subset E_+$, $B$ must also be bounded in $H_+$ so that there is a finite time $T^0 (B) \geq 0$ such that $S(t)B \subset B_0$ for all $t > T^0$. Then \eqref{ascc} implies that 
\begin{equation} \label{bb1}
	S(t) B \subset B_1, \quad \textup{for any} \; \, t > T^0 + T_0 + T_1 + 1,
\end{equation}
where $T_0$ and $T_1$ have been specified in the proof of Lemma \ref{L:ac}.

Next we show that the Oregonator semiflow $\csg$ is asymptotically compact with respect to the strong topology in $E$. For any time sequence $\{t_n \}, t_n \to \infty$, and any $E$-bounded sequence $\{g_n \} \subset E_+$, there exists a finite time $t_0 \geq 0$ such that $S(t) \{g_n\} \subset B_0$, for any $t > t_0$. Then for an arbitrarily given $T > t_0 + T_0 + T_1 + 1$, there is an integer $n_0 \geq 1$ such that $t_n > 2T$ for all $n > n_0$. 

By Lemma \ref{L:ac}, it holds that
$$
	\{S(t_n - T) g_n\}_{n > n_0} \; \textup{is a bounded set in}\; E_+.
$$	
Since $E$ is a Hilbert space, there is an increasing sequence of integers $\{n_j\}_{j=1}^\infty$, with $n_1 > n_0$, such that
$$
	  \lim_{j \to \infty} S(t_{n_j} - T) g_{n_j} = g^* \;\; \textup{weakly in} \; E.
$$
By the compact imbedding $E \hookrightarrow H$, there is a subsequence of $\{n_j\}$, which is relabeled as the same as $\{n_j\}$, such that
$$
	\lim_{j \to \infty} S(t_{n_j} - T) g_{n_j} = g^* \;\; \textup{strongly in} \; H_+,
$$
because $H_+$ is a closed invariant cone of $H$. Moreover, the uniqueness of limit implies that $g^* \in E_+$. Then by Lemma \ref{L:ste}, we have the following convergence with respect to the $E$-norm,
\beq  \bl{ace}
	\lim_{j \to \infty} S(t_{n_j}) g_{n_j} = \lim_{j \to \infty} S(T) S(t_{n_j} - T) g_{n_j} = S(T) g^* \;\; \textup{strongly in} \; E_+.
\eeq
This proves that $\csg$ is asymptotically compact on $E_+$. 

Therefore, by Proposition \ref{P:exga}, there exists a global attractor $\ms{A}_E$ for this Oregonator semiflow $\csg$ in the invariant cone $E_+$. Note that $B_1$ attracts the $H$-absorbing ball $B_0$ in the $E$-norm as demonstrated earlier in this proof, we see that this global attractor $\ms{A}_E$ is an $(H_+, E_+)$ global attractor according to Definition \ref{D:hea}. Thus the invariance and the boundedness of $\ms{A}$ in $H$ and of  $\ms{A}_E$ in $E$ imply that 
\begin{align*}
	& \ms{A} \; \textup{attracts} \; \ms{A}_E \; \textup{in} \; H_+, \; \textup{so that} \; \ms{A}_E \subset \ms{A}, \\
	&\ms{A}_E \; \textup{attracts} \; \ms{A} \; \textup{in} \; E_+, \; \textup{so that} \; \ms{A} \subset \ms{A}_E. 
\end{align*}
Therefore, $\ms{A} = \ms{A}_E$ and, as a consequence, the global attractor $\ms{A}$ in $H_+$ is itself an $(H_+, E_+)$ global attractor for this Oregonator semiflow $\csg$.
\end{proof}

\begin{theorem} \label{T:Infty}
	 The global attractor $\ms{A}$ of the Oregonator semiflow is a bounded subset in $[L^\infty (\gw)]^3$.
\end{theorem}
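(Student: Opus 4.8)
The plan is to bootstrap from the $[L^{6}(\gw)]^{3}$ absorbing estimate of Lemma \ref{L:pab} to an $L^{\infty}$ bound by using the smoothing action of the analytic semigroup $\{e^{At}\}_{t\ge 0}$ in the variation-of-constants formula, together with the invariance of $\ms{A}$. First, since $\csg$ possesses an absorbing set in $[L^{6}(\gw)]^{3}$ (Lemma \ref{L:pab}, $p=3$) and $\ms{A}$ is a bounded invariant set with $S(t)\ms{A}=\ms{A}$ for all $t\ge 0$, the set $\ms{A}$ must itself be contained in that $L^{6}$-absorbing ball; hence there is $R_{6}>0$ with $\|g_{0}\|_{[L^{6}(\gw)]^{3}}\le R_{6}$ for every $g_{0}\in\ms{A}$. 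Moreover, by invariance every point of $\ms{A}$ lies on a complete orbit $\{g(t)=S(t)g_{0}:t\in\mathbb{R}\}$ contained in $\ms{A}$, so $\sup_{t\in\mathbb{R}}\|g(t)\|_{[L^{6}(\gw)]^{3}}\le R_{6}$ along such orbits. Consequently the nonlinear field $f(g)$ of \eqref{opF} is uniformly bounded in $[L^{3}(\gw)]^{3}$ there: the linear terms are bounded in $L^{6}(\gw)\subset L^{3}(\gw)$, while $\|u^{2}\|_{L^{3}}=\|u\|_{L^{6}}^{2}$ and $\|uv\|_{L^{3}}\le\|u\|_{L^{6}}\|v\|_{L^{6}}$ by H\"older's inequality, so there is $R_{3}>0$ with $\|f(g(t))\|_{[L^{3}(\gw)]^{3}}\le R_{3}$ for all $t\in\mathbb{R}$.

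Since $\ms{A}\subset E_{+}$ by Theorem \ref{T:HEatr}, for $g_{0}\in\ms{A}$ the weak solution coincides with the mild (strong) solution, so on the interval $[t-1,t]$ of any complete orbit in $\ms{A}$ one has
\begin{equation*}
	g(t) = e^{A}\,g(t-1) + \int_{t-1}^{t} e^{A(t-s)}\,f(g(s))\,ds .
\end{equation*}
I would estimate both terms in $[L^{\infty}(\gw)]^{3}$ using the $L^{p}$--$L^{q}$ smoothing bounds for the Dirichlet heat semigroup on the bounded Lipschitz domain $\gw$. At the fixed time $1$, $e^{A}$ maps $[L^{2}(\gw)]^{3}$ boundedly into $[L^{\infty}(\gw)]^{3}$, so $\|e^{A}g(t-1)\|_{[L^{\infty}(\gw)]^{3}}\le C\,\|g(t-1)\|\le C\sqrt{K_{1}}$; and for $n\le 3$ one has $\|e^{A\tau}\|_{\mathcal{L}([L^{3}(\gw)]^{3},\,[L^{\infty}(\gw)]^{3})}\le C\,\tau^{-n/6}$ with $n/6\le 1/2<1$, whence
\begin{equation*}
	\Bigl\|\int_{t-1}^{t} e^{A(t-s)}\,f(g(s))\,ds\Bigr\|_{[L^{\infty}(\gw)]^{3}} \le C R_{3}\int_{0}^{1}\sigma^{-n/6}\,d\sigma < \infty .
\end{equation*}
Adding the two contributions yields $\sup_{t\in\mathbb{R}}\|g(t)\|_{[L^{\infty}(\gw)]^{3}}\le C''$ with $C''$ independent of the complete orbit $\{g(t)\}\subset\ms{A}$; since every point of $\ms{A}$ lies on such an orbit, $\ms{A}\subset[L^{\infty}(\gw)]^{3}$ and is bounded there.

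The step I expect to require the most care is the justification of the $L^{p}$--$L^{q}$ mapping properties of the Dirichlet-Laplacian semigroup on a merely Lipschitz domain and the integrability of the singular kernel $(t-s)^{-n/6}$; everything else is a single-step bootstrap precisely because the nonlinearity is quadratic (costing only one factor $L^{6}\to L^{3}$) and, for $n\le 3$, the gap $L^{3}\to L^{\infty}$ still lies in the integrable range of the semigroup. If one prefers to avoid sharp semigroup estimates, an alternative is the Alikakos $L^{p}$-iteration: test \eqref{eu1}--\eqref{ew1} with $u^{2^{k}-1}$, $v^{2^{k}-1}$, $W^{2^{k}-1}$, absorb the quadratic terms by Young's inequality exactly as in the proof of Lemma \ref{L:pab}, derive a recursion for $\sup_{t\ge 0}\|g(t)\|_{[L^{2^{k}}(\gw)]^{3}}$ along orbits in $\ms{A}$, and pass to the limit $k\to\infty$; this is more computational but self-contained, and either route gives the claimed $L^{\infty}$ bound on $\ms{A}$.
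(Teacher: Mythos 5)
Your proof is correct and follows essentially the same route as the paper: the variation-of-constants formula over a unit time interval, the $L^{p}$--$L^{\infty}$ smoothing estimate $\|e^{At}\|_{\mathcal{L}(L^{p},L^{\infty})}\leq C(p)\,t^{-n/(2p)}$ for the analytic semigroup, and the invariance $S(1)\ms{A}=\ms{A}$. The only cosmetic difference is that you bound the nonlinearity in $[L^{3}(\gw)]^{3}$ via the $L^{6}$ bound on $\ms{A}$ (kernel $\sigma^{-n/6}$), whereas the paper bounds it in $H$ via the Lipschitz constant of $f$ on the $E$-bounded set $B_1$ (kernel $\sigma^{-3/4}$); both singularities are integrable and the conclusions agree.
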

\begin{proof}
By the $(L^p, L^\infty)$ regularity of the analytic $C_0$-semigroup $\{e^{At}\}_{t\geq 0}$, cf. \cite[Theorem 38.10]{SY02}, one has $e^{At}: [L^p (\gw)]^3 \to [L^\infty (\gw)]^3$ for $t > 0$, and there is a constant $C(p) > 0$ such that
\begin{equation} \label{cp}
	\| e^{At} \|_{\mathcal{L} (L^p, L^\infty)} \leq C(p) \, t^{- \frac{n}{2p}}, \;\; t > 0, \; \; \textup{where} \; n = \textup{dim} \, \gw.
\end{equation}
By the variation-of-constant formula satisfied by the mild solutions, certainly valid for the strong solutions associated with any $g \in \ms{A} \, (\subset E_+)$, we have, for $n \leq 3$,
\begin{equation} \label{mld}
	\begin{split}
	&\|S(t) g\|_{L^\infty} \leq \|e^{At} \|_{\mathcal{L} (L^2,L^\infty)} \|g\| + \int_0^t \|e^{A(t- \sigma)}\|_{\mathcal{L} (L^2, L^\infty)} \| f(S(\sigma)g) \| \, d\sigma \\
	&\leq  C(2) t^{- \frac{3}{4}} \|g\| + \int_0^t C(2) (t- \sigma)^{-\frac{3}{4}} L(\sqrt{K_E}) \|S(\sigma) g\|_E \, d\sigma, \quad t \geq 0,
	\end{split}
\end{equation}
where $C(2)$ is specified in \eqref{cp} with $p = 2$, and $L(\sqrt{K_E})$ is the Lipschitz constant of the nonlinear map $f$ on the closed bounded ball in $E$ centered at the origin and with radius $\sqrt{K_E}$ shown in \eqref{ke}. Also note that $f (0) = 0$. By the invariance of the global attractor $\ms{A}$, surely we have
$$
	\{S(t) \ms{A}: t \geq 0\} = \ms{A} \subset B_0 \, \subset H_+  \quad \textup{and} \quad \{S(t) \ms{A}: t \geq 0\} = \ms{A} \subset B_1 \, \subset E_+.
$$
Then from \eqref{mld} we get
\begin{equation} \label{bdift}
	\begin{split}
	\|S(t) g\|_{L^\infty} &\leq C(2) \sqrt{K_1} t^{- \frac{3}{4}} + \int_0^t C(2) L(\sqrt{K_E}) \sqrt{K_E} (t- \sigma)^{-\frac{3}{4}} \, d\sigma \\
	&= C(2) [\sqrt{K_1}\, t^{- \frac{3}{4}} +  4 L(\sqrt{K_E}) \sqrt{K_E}\, t^{\frac{1}{4}}], \quad \textup{for} \; t > 0.
	\end{split}
\end{equation}
Specifically one can take $t = 1$ in \eqref{bdift} and use the invariance $S(t) \ms{A} = \ms{A}$ to obtain
$$
	\| g \|_{L^\infty} \leq C(2)(\sqrt{K_1} +  4 \sqrt{K_E} L(\sqrt{K_E})), \quad \textup{for any} \; g \in \ms{A}.
$$
Thus the global attractor $\ms{A}$ is a bounded subset in $[L^\infty (\gw)]^3$.
\end{proof}

\section{\textbf{The Existence of Exponential Attractor}}

In this section, we prove the existence of an exponential attractor for the Oregonator semiflow $\csg$ in the invariant cone $H_+$.

\begin{definition}\label{D:exatr}
Let $\mX$ be al Banach space or a closed invariant cone in it and let $\{\Sigma (t)\}_{t\ge 0}$ be a semiflow on $\mX$. A set $\mathscr{E}\subset \mX$ is an exponential attractor for the semiflow $\{\Sigma (t)\}_{t\ge 0}$ in $\mX$, if the following conditions are satisfied:
\begin{enumerate}
\item[(i)]  $\mathscr{E}$ is a nonempty, compact, positively invariant set in $\mX$,
	
\item[(ii)]  $\mathscr{E}$ has a finite fractal dimension, and
	
\item[(iii)]  $\mathscr{E}$ attracts every bounded set $B \subset \mX$ exponentially: there exist positive constants $\mu$ and $C(B)$ which depends on $B$, such that
$$
	\dist_\mX(\Sigma (t)B,\mathscr{E})\leq C(B)e^{-\mu t},\quad\text{for}\,\; t\ge 0.
$$
\end{enumerate}
\end{definition}

The basic theory and construction of exponential attractors were established in \cite{EFNT94} for discrete and continuous semiflows on Hilbert spaces. The existence theory has been generalized to semiflows on Banach spaces in \cite{DN01} and extended to some nonlinear reaction-diffusion equations on unbounded domains. 


Here we prove the existence of exponential attractor for the Oregonator semiflow by using the following lemma, Lemma \ref{L:EXatr}, which is a modified version of the result shown in \cite[Lemma 6.3]{yY10a}, whose proof was based on the squeezing property \cite{EFNT94, MK05} and the constructive argument in \cite[Theorem 4.5]{MK05}. This lemma provides a way to directly check the sufficient conditions for the existence of an exponential attractor of a semiflow on a positively invariant cone in a Hilbert space. 

\begin{definition}\label{D:sqpy}
For a spectral (orthogonal) projection $P_N$ relative to a nonnegative, self-adjoint, linear operator $\Lambda:D(\Lambda)\to\mathcal{H}$ with a  compact resolvent, which maps the Hilbert space $\mathcal{H}$ onto the $N$-dimensional subspace $\mathcal{H}_N$ spanned by a set of the first $N$ eigenvectors of the operator $\Lambda$, we defined a cone
$$
	\mathscr{C}_{P_N}=\left\{y\in \mathcal{H}:\left\|\left(I-P_N\right)(y)\right\|_{\mathcal{H}}\leq\left\|P_N(y)\right\|_{\mathcal{H}}\right\}.
$$
A continuous mapping $S_*$ satisfies the \emph{discrete squeezing property} relative to a set $B\subset\mathcal{H}$ if there exist a constant $\gk \in(0,1/2)$ and a spectral projection $P_{N}$ on $\mathcal{H}$ such that for any pair of points $y_0,z_0\in B$, if
$$
	S_*\left(y_0\right)-S_*\left(z_0\right)\notin\mathscr{C}_{P_N},
$$
then
$$
	\left\|S_*\left(y_0\right)-S_*\left(z_0\right)\right\|_{\mathcal{H}}\leq\gk \left\|y_0-z_0\right\|_{\mathcal{H}}.
$$
\end{definition}

\begin{lemma}\label{L:EXatr}
Let $\mX$ be a Hilbert space and $X \subset \mX$ be an open cone with the vertex at the origin and $X_c$ be the closure of $X$ in $\mX$. Consider an evolutionary equation
\begin{equation}
\label{eveqig}
	\frac{d\varphi}{dt}+\Lambda \varphi = \Phi (\varphi),\quad t>0,
\end{equation}
where $\Lambda:D(\Lambda)\to \mX$ is a nonnegative, self-adjoint, linear operator with compact resolvent, and $\Phi: \mY=D(\Lambda^{1/2})\to \mX$ is a locally Lipschitz continuous mapping, where $\mY$ is a compactly imbedded subspace of $\mX$. Suppose that the weak solution of \eqref{eveqig} for each initial point $\varphi (0)= \varphi_0\in X_c$ uniquely exists and is confined in $X_c$ for all $t\geq 0$, which turn out to be a strong solution for $t > 0$. All these solutions in $X_c$ form a semiflow denoted by $\{\Sigma (t)\}_{t\ge 0}$. Assume that the following conditions are satisfied:
\begin{enumerate}
\item[\tup{(i)}]  There exist a compact, positively invariant, absorbing set $\mathcal{B}_c$ in $X_c$ with respect to the topology of $\mX$.

\item[\tup{(ii)}]  There is a positive integer $N$ such that the norm quotient $\Gamma (t)$ defined by

\begin{equation}
\label{qn}
	\Gamma (t)=\frac{\left\|\Lambda^{1/2}\left(\varphi_1(t)-\varphi_2(t)\right)\right\|_\mX^2}{\left\|\varphi_1(t)-\varphi_2(t)\right\|_\mX^2}
\end{equation}
for any distinct trajectories $\varphi_1(\cdot)$ and $\varphi_2(\cdot)$ starting from the set $\mathcal{B}_c\backslash\mathscr{C}_{P_N}$ satisfies 
$$
	\frac{d\Gamma}{dt}\le\rho\left(\mathcal{B}_c\right)\Gamma(t),\quad t>0,
$$
where $\rho\left(\mathcal{B}_c\right)$ is a positive constant only depending on $\mathcal{B}_c$.

\item[\tup{(iii)}]  For any given finite $T>0$ and any given $\varphi \in\mathcal{B}_c$, $\Sigma (\cdot)\varphi:[0,T]\to\mathcal{B}_c$ is H\"{o}lder continuous with exponent $\theta=1/2$ and the coefficient of H\"{o}lder continuity, $K_\theta (\varphi):\mathcal{B}_c\to(0,\infty)$, is a bounded function.

\item[\tup{(iv)}]  For any $t\in[0,T]$ where $T>0$ is arbitrarily given, $\Sigma (t)(\cdot):\mathcal{B}_c\to\mathcal{B}_c$ is Lipschitz continuous and the Lipschitz constant $L(t):[0,T]\to(0,\infty)$ is a bounded function.
\end{enumerate}
Then there exists an exponential attractor $\mathscr{E}$ in $X_c$ for this semiflow $\{\Sigma (t)\}_{t\ge 0}$.
\end{lemma}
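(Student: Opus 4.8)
The plan is to deduce the statement from the constructive theory of exponential attractors for a \emph{discrete} semigroup and then saturate over one time step. Fix a time step $t^{*}>0$, to be chosen large at the end, and put $S_{*}=\Sigma(t^{*})$, regarded as a continuous self-map of the compact, positively invariant, absorbing set $\mB_{c}$ furnished by hypothesis (i). The only substantial step is to verify that, for a suitable $t^{*}$ and the spectral projection $P_{N}$ of hypothesis (ii), the map $S_{*}$ enjoys the discrete squeezing property of Definition \ref{D:sqpy} relative to $\mB_{c}$. Granting this, hypothesis (iv) says that $S_{*}=\Sigma(t^{*})$ is Lipschitz on $\mB_{c}$, and the pair ``Lipschitz $+$ discrete squeezing'' is exactly the input of the constructive argument of \cite{EFNT94} (see also \cite[Theorem 4.5]{MK05}): it produces a set $\ms{E}_{*}\subset\mB_{c}$ that is compact, has finite fractal dimension, satisfies $S_{*}\ms{E}_{*}\subset\ms{E}_{*}$, and attracts $\mB_{c}$ exponentially under the iterates $\{S_{*}^{\,k}\}_{k\ge 0}$.

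To obtain the squeezing property I would argue as follows. Let $\varphi_{1}(\cdot),\varphi_{2}(\cdot)$ be two trajectories starting in $\mB_{c}$, write $w=\varphi_{1}-\varphi_{2}$, $p=P_{N}w$, $q=(I-P_{N})w$, and let $L$ be the Lipschitz constant of $\Phi$ on $\mB_{c}$. Testing the difference equation $\frac{dw}{dt}+\Lambda w=\Phi(\varphi_{1})-\Phi(\varphi_{2})$ against $q$ and using $\|\Lambda^{1/2}q\|_{\mX}^{2}\ge\lambda_{N+1}\|q\|_{\mX}^{2}$ together with $\|\Phi(\varphi_{1})-\Phi(\varphi_{2})\|_{\mX}\le L\|\Lambda^{1/2}w\|_{\mX}$ yields a differential inequality for $\|q\|_{\mX}^{2}$ whose nonlinear term carries the $\mY$-norm $\|\Lambda^{1/2}w\|_{\mX}$. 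This is where hypothesis (ii) enters: one first checks, from the spectral gap and (ii), that the squeezing cone $\mathscr{C}_{P_{N}}$ is forward invariant along difference trajectories issuing from $\mB_{c}$, so that $w(t^{*})\notin\mathscr{C}_{P_{N}}$ forces $w(t)\notin\mathscr{C}_{P_{N}}$ on all of $[0,t^{*}]$; then, along that whole interval, the norm quotient $\Gamma$ of \eqref{qn} obeys $d\Gamma/dt\le\rho(\mB_{c})\Gamma$, hence $\Gamma(t)\le\Gamma(0)e^{\rho(\mB_{c})t}$, which keeps $\|\Lambda^{1/2}w\|_{\mX}$ comparable to $\|w\|_{\mX}$. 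Substituting this back, choosing $N$ so that $\lambda_{N+1}$ dominates the resulting constant and then $t^{*}$ large, gives $\|q(t^{*})\|_{\mX}\le\tfrac{1}{2}e^{-c t^{*}}\|w(0)\|_{\mX}$ for some $c>0$. When $S_{*}\varphi_{1}(0)-S_{*}\varphi_{2}(0)=w(t^{*})\notin\mathscr{C}_{P_{N}}$ one has $\|w(t^{*})\|_{\mX}^{2}<2\|q(t^{*})\|_{\mX}^{2}$, so $\|w(t^{*})\|_{\mX}\le\gk\|w(0)\|_{\mX}$ with $\gk\in(0,1/2)$ once $t^{*}$ is fixed large enough; this is precisely the discrete squeezing property.

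It then remains to pass from $\ms{E}_{*}$ to a continuous exponential attractor by setting $\ms{E}=\bigcup_{0\le t\le t^{*}}\Sigma(t)\ms{E}_{*}$. Hypotheses (iii) and (iv) make $(t,\varphi)\mapsto\Sigma(t)\varphi$ H\"{o}lder continuous of exponent $\theta=1/2$ in $t$ and Lipschitz in $\varphi$ on $[0,t^{*}]\times\mB_{c}$, hence jointly H\"{o}lder of exponent $\tfrac12$ on that bounded set; since $[0,t^{*}]\times\ms{E}_{*}$ is compact and has finite fractal dimension, so does its image $\ms{E}$, with $d_{F}(\ms{E})\le 2\big(1+d_{F}(\ms{E}_{*})\big)$. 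Writing any $t\ge 0$ as $t=kt^{*}+r$ with $0\le r<t^{*}$ and using $S_{*}^{\,k}\ms{E}_{*}\subset\ms{E}_{*}$ shows $\Sigma(t)\ms{E}\subset\ms{E}$, i.e. $\ms{E}$ is positively invariant, and it is compact as the continuous image of a compact set. Finally, for any bounded $B\subset X$, the absorbing property in (i) puts $\Sigma(t)B$ inside $\mB_{c}$ after a finite time; combining the discrete exponential attraction of $\ms{E}_{*}$ with the uniform Lipschitz bound of (iv) on the successive intervals of length $t^{*}$ yields $\dist_{\mX}(\Sigma(t)B,\ms{E})\le C(B)e^{-\mu t}$. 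By Definition \ref{D:exatr}, $\ms{E}$ is an exponential attractor in $X_{c}$ for $\{\Sigma(t)\}_{t\ge 0}$.

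The hard part is the discrete squeezing property. The difficulty is that $\Phi$ is only Lipschitz from $\mY$ into $\mX$, so the contraction of the high spectral modes of $w$ — driven by the eigenvalue $\lambda_{N+1}$ over a time span $t^{*}$ — must be made to dominate a nonlinear term measured in the stronger $\mY$-norm; hypothesis (ii), which controls the growth of the norm quotient outside the cone, is exactly the device that repairs this mismatch. Everything after the squeezing property is the by now standard EFNT/MK05 machinery, carried out for closed invariant cones as in \cite{DN01} and \cite[Lemma 6.3]{yY10a}.
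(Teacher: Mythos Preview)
Your proposal is correct and follows precisely the route the paper itself indicates: the lemma is not proved in this paper but is cited as a modification of \cite[Lemma 6.3]{yY10a}, whose proof in turn rests on the discrete squeezing property of \cite{EFNT94} and the constructive argument of \cite[Theorem 4.5]{MK05}. Your outline---verify discrete squeezing for $S_*=\Sigma(t^*)$ via the norm-quotient bound (ii), invoke the EFNT/MK05 construction to produce a discrete exponential attractor $\ms{E}_*$, then saturate $\ms{E}=\bigcup_{0\le t\le t^*}\Sigma(t)\ms{E}_*$ using the H\"older--Lipschitz hypotheses (iii)--(iv) to preserve compactness, finite fractal dimension, positive invariance, and exponential attraction---is exactly that machinery, so there is nothing to compare.
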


\begin{theorem}\label{T:exstexp}
Given any positive parameters in the Oregonator system \eqref{eu}--\eqref{ew}, there exists an exponential attractor $\mathscr{E}$ in $H_+$ for the Oregonator semiflow $\csg$.
\end{theorem}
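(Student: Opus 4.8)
The plan is to apply Lemma~\ref{L:EXatr} to the Oregonator evolutionary equation \eqref{eveq}, taking $\mX = H$, the closed invariant cone $H_+$ as the phase space $X_c$, $\Sigma(t) = S(t)$, and $\Lambda = -A$. By \eqref{opA}, $\Lambda$ is a nonnegative (in fact positive) self-adjoint operator on $H$ with compact resolvent, $D(\Lambda) = \Pi$, and $\mathcal{Y} := D(\Lambda^{1/2}) = E$ is compactly imbedded in $H$; moreover $\|\Lambda^{1/2}\varphi\|^2 = d_1\|\nabla\varphi_1\|^2 + d_2\|\nabla\varphi_2\|^2 + d_3\|\nabla\varphi_3\|^2$ is equivalent to $\|\varphi\|_E^2$ with the constants $d_0 = \min\{d_1, d_2, d_3\}$ and $\max\{d_1, d_2, d_3\}$. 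Then \eqref{eveq} reads as \eqref{eveqig} with $\Phi = f$, which is locally Lipschitz from $E$ into $H$ as recorded after \eqref{opF}, and Lemma~\ref{L:glsn} supplies the required unique global existence of the weak solutions in $H_+$, strong for $t > 0$; so $\csg$ is a semiflow on $H_+$. It remains to verify conditions (i)--(iv).

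For (i), I would start from the set $B_1$ in \eqref{absbE}, which by Theorem~\ref{T:HEatr} is an absorbing set for $\csg$ in $E_+$ and, being bounded in $E$, is precompact in $H$. Fixing $T_* > 0$ with $S(t)B_1 \subseteq B_1$ for all $t \ge T_*$ (see \eqref{bb1}) and putting $\mathcal{B}_c = \mathrm{Cl}_H \bigcup_{t \ge T_*} S(t)B_1$, the set $\mathcal{B}_c$ is contained in the $H$-closure of $B_1$, hence compact in $H$ and still satisfying $\|\varphi\|_E^2 \le K_E$ on $\mathcal{B}_c$; it is positively invariant, since $S(s)\mathcal{B}_c \subseteq \mathrm{Cl}_H\bigcup_{t \ge T_* + s}S(t)B_1 \subseteq \mathcal{B}_c$; and it absorbs every bounded subset of $H$ because $B_1$ does (Lemma~\ref{L:glsn} and Theorem~\ref{T:HEatr}). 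Thus $\mathcal{B}_c$ is the required compact, positively invariant, absorbing set in $X_c$ for the $H$-topology.

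Condition (ii) is the heart of the matter. Let $\varphi_1(\cdot), \varphi_2(\cdot)$ be distinct trajectories with $\varphi_1(0), \varphi_2(0) \in \mathcal{B}_c \setminus \mathscr{C}_{P_N}$; by positive invariance $\varphi_1(t), \varphi_2(t) \in \mathcal{B}_c$ for all $t \ge 0$. Set $\xi = \varphi_1 - \varphi_2$, which solves $d\xi/dt = -\Lambda\xi + (f(\varphi_1) - f(\varphi_2))$. Differentiating the quotient $\Gamma$ in \eqref{qn}, and using both $\langle \Lambda\xi, \xi \rangle = \|\Lambda^{1/2}\xi\|^2 = \Gamma\|\xi\|^2$ and the algebraic identity $\|\Lambda\xi\|^2 - \Gamma^2\|\xi\|^2 = \|(\Lambda - \Gamma I)\xi\|^2 \ge 0$, one obtains
\begin{equation*}
	\frac{d\Gamma}{dt} = \frac{2}{\|\xi\|^2}\left[ -\|(\Lambda - \Gamma I)\xi\|^2 + \langle (\Lambda - \Gamma I)\xi, \; f(\varphi_1) - f(\varphi_2) \rangle \right] \le \frac{\|f(\varphi_1) - f(\varphi_2)\|^2}{2\|\xi\|^2}.
\end{equation*}
Since $\varphi_1(t), \varphi_2(t)$ lie in the closed $E$-ball of radius $\sqrt{K_E}$, on which $f$ has Lipschitz constant $L(\sqrt{K_E})$, we have $\|f(\varphi_1) - f(\varphi_2)\| \le L(\sqrt{K_E})\|\xi\|_E \le d_0^{-1/2}L(\sqrt{K_E})\|\Lambda^{1/2}\xi\|$, whence $d\Gamma/dt \le \rho(\mathcal{B}_c)\Gamma$ with $\rho(\mathcal{B}_c) = L(\sqrt{K_E})^2/(2 d_0)$, which depends only on $\mathcal{B}_c$. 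Notice that the cone restriction $\varphi_i(0) \notin \mathscr{C}_{P_N}$, and hence the choice of $N$, is not used in this differential inequality; it is needed only inside the proof of Lemma~\ref{L:EXatr} when passing to the squeezing property.

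Conditions (iii) and (iv) are routine estimates, uniform over $\mathcal{B}_c$. For (iv), testing the difference equation for $\xi = \varphi_1 - \varphi_2$ by $\xi$ and using the Lipschitz bound on $f$ over the $E$-ball of radius $\sqrt{K_E}$, as in the uniqueness argument of Lemma~\ref{L:locs}, gives $\frac{d}{dt}\|\xi\|^2 \le C(\mathcal{B}_c)\|\xi\|^2$, so by Gronwall $\|\Sigma(t)\varphi_1 - \Sigma(t)\varphi_2\| \le e^{C(\mathcal{B}_c)t/2}\|\varphi_1 - \varphi_2\|$, a bounded Lipschitz constant on each $[0, T]$. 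For (iii), testing \eqref{eveq} by $g'$ and integrating yields $\int_0^T \|g'(s)\|^2\,ds \le \|\Lambda^{1/2}g(0)\|^2 + \int_0^T \|f(g(s))\|^2\,ds \le C(\mathcal{B}_c, T)$ for every solution with $g(0) \in \mathcal{B}_c$ (using $g \in C([0,T];E)$ and the boundedness of $f$ on the $E$-ball of radius $\sqrt{K_E}$), whence $\|g(t_2) - g(t_1)\| \le \int_{t_1}^{t_2}\|g'(s)\|\,ds \le |t_2 - t_1|^{1/2}\|g'\|_{L^2(0,T;H)}$ gives H\"{o}lder continuity with exponent $1/2$ and a bounded coefficient. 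With (i)--(iv) verified, Lemma~\ref{L:EXatr} produces an exponential attractor $\ms{E}$ in $H_+$ for $\csg$. The main obstacle is condition (ii) --- obtaining the norm-quotient identity and confirming that $\rho$ depends on $\mathcal{B}_c$ alone; a secondary care is arranging that $\mathcal{B}_c$ be at once compact, positively invariant, and absorbing in the $H$-topology, which is what forces the ``$H$-closure of the forward orbit of the $E_+$-absorbing ball'' construction used in (i).
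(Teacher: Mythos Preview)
Your proof is correct and follows the same overall strategy as the paper: verify the four hypotheses of Lemma~\ref{L:EXatr}. The norm-quotient estimate (ii) is essentially identical to the paper's, only packaged via the abstract Lipschitz constant $L(\sqrt{K_E})$ rather than the explicit $L^4$-based bound \eqref{fwbd}; both give $d\Gamma/dt \le \rho\,\Gamma$ with $\rho$ depending only on the $E$-bound of the invariant set.

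Where you differ is in the auxiliary verifications. For (i) the paper simply cites an external result (Corollary~5.7 of \cite{yY10a}) to obtain the compact positively invariant absorbing set $\mathcal{B}_E$, while you construct it by hand as the $H$-closure of the forward $E_+$-orbit of $B_1$; your argument is self-contained and correct (the $H$-closure of an $E$-bounded set remains $E$-bounded by weak lower semicontinuity, which you use implicitly). For (iii) the paper uses the variation-of-constants formula together with the analytic-semigroup bounds $\|e^{At}g - g\| \le N_0 t^{1/2}\|g\|_E$ and $\|e^{At}\|_{\mathcal{L}(H,E)} \le N_1 t^{-1/2}$, whereas you obtain the $1/2$-H\"older bound from the energy estimate $\int_0^T \|g'\|^2 \le \|\Lambda^{1/2}g(0)\|^2 + \int_0^T\|f(g)\|^2$; this requires the maximal-regularity fact $g' \in L^2(0,T;H)$ for $g(0)\in E$, which is standard for analytic semigroups but is one step you should make explicit. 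For (iv) the paper again defers to an external reference (\cite[Theorem~47.8]{SY02}), while your direct Gronwall argument is perfectly adequate. In short: same route, with your verifications more elementary and self-contained, the paper's more citation-driven.
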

\begin{proof}
By Theorem \ref{T:HEatr}, there exists an $(H_+,E_+)$ global attractor $\mathscr{A}$ for the Oregonator semiflow $\csg$. Consequently, by Corollary 5.7 of \cite{yY10a}, there exists a compact, positively invariant, absorbing set $\mathcal{B}_E$ in $H_+$, which is a bounded set in $E_+$, for this semiflow.

Next we prove that the second condition in Lemma \ref{L:EXatr} is satisfied by this Oregonator semiflow. Consider any two points $g_1(0), g_2(0)\in\mathcal{B}_E$ and let $g_i(t)=\left(u_i(t),v_i(t),w_i (t)\right)$, $i=1,2$, be the corresponding solutions of \eqref{eveq} , respectively. Let $y(t)=g_1(t)-g_2(t)$, $t\geq 0$, where $g_1(0)\ne g_2(0)$. The associated norm quotient of $\left(g_1 (t),g_2 (t)\right)$ is given by
$$
	\Gamma (t)=\frac{\left\|(-A)^{1/2}y(t)\right\|^2}{\|y(t)\|^2},\quad t\geq 0.
$$
For $t > 0$, we can calculate
\begin{equation}
\label{qnest}
\begin{split}
	\frac12 \frac{d}{dt}\Gamma (t) &=\frac1{\|y(t)\|^4}\left[\inpt{(-A)^{1/2}y(t),(-A)^{1/2}y_t}\|y(t)\|^2
    -\|(-A)^{1/2}y(t)\|^2 \inpt{y(t),y_t}\right] \\
         &= \frac1{\|y(t)\|^2} \left[\inpt{(-A)y(t), y_t} - \Gamma (t) \inpt{y(t), y_t}\right] \\
	&=\frac1{\|y(t)\|^2}\inpt{(-A)y(t)-\Gamma (t)y(t),Ay(t)+f\left(g_1(t)\right)-f\left(g_2(t)\right)} \\
	&=\frac1{\|y(t)\|^2}\inpt{(-A)y(t)-\Gamma (t)y(t),Ay(t)+\Gamma (t)y(t)+f\left(g_1(t)\right)-f \left(g_2(t)\right)} \\
	&=\frac1{\|y(t)\|^2}\left[-\|Ay(t)+\Gamma (t)y(t)\|^2-\inpt{Ay(t)+\Gamma (t)y(t),f \left(g_1(t)\right)-f \left(g_2(t)\right)}\right] \\
	&\leq\frac1{\|y(t)\|^2}\left[-\frac12\|Ay(t)+\Gamma (t)y(t)\|^2+\frac12\left\|f \left(g_1(t)\right)-f \left(g_2(t)\right)\right\|^2\right]
\end{split}
\end{equation}
where we used the identity $- \inpt{Ay(t)+\Gamma (t)y(t),\Gamma (t)y(t)}=0$. Note that the compact, positively invariant, $H$-absorbing set $\mathcal{B}_E$ described earlier in this proof is a bounded set in $E_+$ and that $E_+ \hookrightarrow [L^4(\Omega)]^3$ is a continuous imbedding. Hence there is a constant $R>0$ only depending on $\mathcal{B}_E$ such that
\begin{equation}
\label{bR6}
	\|(u,v, w)\|_{L^4(\Omega)}^2\leq R,\quad\text{for any}\;(u,v, w)\in\mathcal{B}_E.
\end{equation}
It is seen that
\begin{equation} \label{fw}
\begin{split}
	\|f (g_1(t))- f(g_2(t))\|^2 = &\, \left\| a_1 (u_1 - u_2) + b_1 (v_1 - v_2) - F(u_1^2 - u_2^2) - G_1 (u_1 v_1 - u_2 v_2)\right\|^2 \\
	&\, +\left\| - b_2 (v_1 - v_2) + c_2 (w_1 - w_2) - G_2 (u_1 v_1 - u_2 v_2)\right\|^2 \\
	&\, + \left\| a_3 (u_1 - u_2) - c_3 (w_1 - w_2) \right\|^2.
\end{split}
\end{equation}
Using the Cauchy-Schwarz inequality, \eqref{pcr}, \eqref{4imb} and \eqref{bR6}, we have
\begin{equation} \bl{fwbd}
	\begin{split}
	&\|f\left(g_1(t)\right)-f\left(g_2(t)\right)\|^2 \\[2pt]
	\leq &\, (4a_1^2 + 4b_1^2 + 3b_2^2 + 3c_2^2 + 2a_3^2 + 2c_3^2) \|y(t)\|^2 + 4F^2 \|u_1 - u_2\|_{L^4}^2 \|u_1 + u_2\|_{L^4}^2  \\[2pt]
	&{} + 8(G_1^2 + G_2^2)\left( \|u_1\|_{L^4}^2 \|v_1 - v_2\|_{L^4}^2 + \|u_1 - u_2\|_{L^4}^2 \|v_2\|_{L^4}^2\right) \\[2pt]
	\leq &\, \ga (4a_1^2 + 4b_1^2 + 3b_2^2 + 3c_2^2 + 2a_3^2 + 2c_3^2) \|\nb y(t)\|^2 + 16 R F^2 \eta^2 \|\nb (u_1 - u_2)\|^2  \\[2pt]
	&{} + 8R (G_1^2 + G_2^2)\eta^2 \left(\|\nb (v_1 - v_2)\|^2 + \|\nb (u_1 - u_2)\|^2 \right) \\[2pt]
	\leq &\,  \left(\ga (4a_1^2 + 4b_1^2 + 3b_2^2 + 3c_2^2 + 2a_3^2 + 2c_3^2) + 16 R F^2 \eta^2 + 8R (G_1^2 + G_2^2)\eta^2\right) \|\nb y(t)\|^2,
	\end{split}
\end{equation}
for $t > 0$. Let
\begin{equation} \label{nr}
	N(R) = 4\ga (a_1^2 + b_1^2 + b_2^2 + c_2^2 + a_3^2 + c_3^2) + 16 R F^2 \eta^2 + 8 R (G_1^2 + G_2^2)\eta^2.
\end{equation}
In view of \eqref{opA}, from \eqref{qnest} and \eqref{fwbd} it follows that
\begin{equation} \label{QN}
	\begin{split}
	\frac{d}{dt}\,\Gamma (t) &\leq \frac{1}{\|y(t)\|^2}\left\|f (g_1(t)) - f (g_2(t))\right\| \leq N(R) \frac{\|\nb y (t)\|^2}{\|y(t)\|^2} \\
	& \leq \rho (\mathcal{B}_E) \frac{\|(-A)^{1/2} y(t)\|^2}{\| y(t) \|^2} = \rho(\mathcal{B}_E) \Gamma (t),\quad t > 0,
	\end{split}
\end{equation}
where $N(R)$ shown in \eqref{nr} is a constant only depending on $R$ which in turn depends on $\mathcal{B}_E$, and
$$
	\rho(\mathcal{B}_E) = \frac{N(R)}{d_0}.
$$ 
where $d_0$ is given in \eqref{dm}. Thus the second condition in Lemma \ref{L:EXatr} is satisfied.

Now check the H\"{o}lder continuity of $S(\cdot)g:[0,T]\to\mathcal{B}_E$ for any given $g\in\mathcal{B}_E$ and any given compact interval $[0,T]$. For any $0\le t_1<t_2\le T$, we get
\begin{equation}
\label{vocs}
\begin{split}
		\left\|S\left(t_2\right)g-S\left(t_1\right)g\right\| &\leq\left\|\left(e^{A\left(t_2-t_1\right)}-I\right)e^{At_1}g\right\|
        +\int_{t_1}^{t_2}\left\|e^{A\left(t_2-\sigma\right)}f(S(\sigma)g)\right\| d\sigma \\
		&+\int_0^{t_1}\left\|\left(e^{A\left(t_2-t_1\right)}-I\right)e^{A\left(t_1-\sigma\right)}f(S(\sigma)g)\right\| d\sigma.
\end{split}
\end{equation}
Since $\mathcal{B}_E$ is positively invariant with respect to the Oregonator semiflow $\csg$ and $\mathcal{B}_E$ is bounded in $E_+$, there exists a constant $K_{\mathcal{B}_E}>0$ such that for any $g\in\mathcal{B}_E$, we have
$$
	\left\| S(t)g \right\|_E\leq K_{\mathcal{B}_E},\quad t\geq 0.
$$
Since $f: E\to H$ is locally Lipschitz continuous, there is a Lipschitz constant $L_{\mathcal{B}_E}>0$ of $f$ relative to this positively invariant set $\mathcal{B}_E$. Moreover, by \cite[Theorem 37.5]{SY02}, for the analytic, contracting, linear semigroup $\{e^{At}\}_{t\ge 0}$, there exist positive constants $N_0$ and $N_1$ such that
$$
	\left\|e^{At}g-g\right\|_H\leq N_0 \, t^{1/2}\|g\|_E,\quad\text{for}\;t\geq 0,\; g \in E,
$$
and 
$$
	\left\|e^{At}\right\|_{\mathcal{L}(H,E)}\leq N_1\,  t^{-1/2},\quad\text{for}\; t>0.
$$
It follows that
$$
	\left\|\left(e^{A\left(t_2-t_1\right)}-I\right)e^{At_1}g\right\|\le N_0 \left(t_2-t_1\right)^{1/2}K_{\mathcal{B}_E}
$$
and
$$
	\int_{t_1}^{t_2}\left\|e^{A\left(t_2-\sigma\right)}f(S(\sigma)g)\right\|d\sigma
    \le\int_{t_1}^{t_2}\frac{N_1 L_{\mathcal{B}_E}K_{\mathcal{B}_E}}{\sqrt{t_2-\sigma}}\, d\sigma
    =2K_{\mathcal{B}_E}L_{\mathcal{B}_E}N_1 \left(t_2-t_1\right)^{1/2}.
$$
Moreover,
\begin{align*}
	\int_0^{t_1}\left\|\left(e^{A\left(t_2-t_1\right)}-I\right)e^{A\left(t_1-\sigma\right)} f(S(\sigma)g)\right\| d\sigma
    &\leq N_0 \left(t_2-t_1\right)^{1/2}\int_0^{t_1} \frac{N_1 L_{\mathcal{B}_E}K_{\mathcal{B}_E}}{\sqrt{t_1-\sigma}}\, d\sigma \\[3pt]
	&=2K_{\mathcal{B}_E}L_{\mathcal{B}_E}N_0 N_1 \sqrt{T}\left(t_2-t_1\right)^{1/2}.
\end{align*}
Substituting the above three inequalities into \eqref{vocs}, we obtain
\begin{equation}
\label{holc}
	\left\|S\left(t_2\right)g-S\left(t_1\right)g\right\|
    \le K_{\mathcal{B}_E}\left(N_0 + 2L_{\mathcal{B}_E}N_1 (1+N_0 \sqrt{T})\right)
    \left(t_2-t_1\right)^{1/2},
\end{equation}
for $0\le t_1<t_2\le T$. Thus the third condition in Lemma \ref{L:EXatr} is satisfied. Namely, for any given $T>0$, the mapping $S(\cdot)g:[0,T]\to\mathcal{B}_E$ is H\"{o}lder continuous with the exponent $\theta = 1/2$ and with a uniformly bounded coefficient independent of $g\in\mathcal{B}_E$.

We can use Theorem 47.8 (specifically (47.20) therein) in \cite{SY02} to confirm the Lipschitz continuity of the mapping $S(t)(\cdot):\mathcal{B}_E\to\mathcal{B}_E$ for any $t\in[0,T]$ where $T>0$ is arbitrarily given. Thus the fourth condition in Lemma \ref{L:EXatr} is also satisfied. Finally, we apply Lemma \ref{L:EXatr} to reach the conclusion that there exists an exponential attractor $\mathscr{E}$ in $H_+$ for the Oregonator semiflow $\csg$.
\end{proof}

\end{document}